\newcommand\lie[1]{\mathfrak{#1}}
\newcommand{\lb}{\lie{b}}
\newcommand{\lu}{\lie{u}}
\newcommand{\n}{\lie{n}}
\newcommand{\p}{\lie{p}}
\newcommand{\g}{\lie{g}}
\newcommand{\s}{\lie{sl}}
\newcommand{\h}{\lie{h}}
\newcommand{\C}{\mathbb{C}}
\newcommand{\PS}{\mathbb{P}}
\newcommand{\Q}{\mathbb{Q}}
\newcommand{\Z}{\mathbb{Z}}
\newcommand{\kf}{\mathbb{K}}
\newcommand{\N}{\mathbb{N}}
\newcommand{\Sch}{\mathbb{S}}
\newcommand{\Nt}{{ \widetilde{\mathcal{N}} }}
\newcommand{\Ox}{\mathcal{O}}
\newcommand{\F}{\mathcal{F}}
\newcommand\op[1]{{\rm{#1}}}
\newcommand\mH{\mathrm{H}}
\newcommand\lra{\longrightarrow}
\newcommand\pr{\mathrm{pr}}
\newcommand{\Ul}{{\mathsf U}}
\newcommand{\ul}{{\mathsf u}}
\newcommand{\Zl}{{\mathsf Z }}
\newcommand{\zl}{{\mathsf z }}
\newcommand{\Pg}{\mathcal{P}}
\newcommand{\Qg}{\mathcal{Q}}
\newcommand{\Cg}{\mathcal{C}}
\theoremstyle{theorem}
\newtheorem{theorem}{Theorem}[section]
\newtheorem{corollary}[theorem]{Corollary}
\newtheorem{conjecture}[theorem]{Conjecture}
\newtheorem{lemma}[theorem]{Lemma}
\newtheorem{proposition}[theorem]{Proposition}
\theoremstyle{definition}
\newtheorem{definition}[theorem]{Definition}
\newtheorem{example}[theorem]{Example}
\numberwithin{equation}{section}
\title{The center of small quantum groups II: singular blocks}
\author{Anna Lachowska, You Qi}
\date{\today}
\begin{document}

\maketitle

\begin{abstract}
We generalize to the case of singular blocks the result in \cite{BeLa} that describes the center of the principal block of a small quantum group in terms of sheaf cohomology over the Springer resolution. Then using the method developed in \cite{LQ1}, we present a linear algebro-geometric approach to compute the dimensions of the singular blocks and of the entire center of the small quantum group associated with a complex semisimple Lie algebra. A conjectural formula for the dimension of the center of the small quantum group at an $l$th root of unity is formulated in type A.

\vspace{0.2in}

2010 \emph{ Mathematics Subject Classification}: Primary 14L99; Secondary 17B37, 20G05.
\end{abstract}

\section{Introduction}
The present paper is a continuation of our project started in \cite{LQ1}\footnote{We refer the reader to this paper for more background material and motivations.} to find a combinatorial description of the center of the small quantum group associated to a simple Lie algebra $\g$ at a primitive $l$th root of unity. While in \cite{LQ1} 
we  were concerned with the principal block of the center, here our main object of consideration 
will be the singular blocks. 

The small quantum group $\ul_q(\g)$, defined by Lusztig in \cite{LusfdHopf}, is a finite-dimensional Hopf 
algebra associated to a semisimple Lie algebra $\g$ and an (odd) $l$th root of unity $q$. It occurs as a subquotient of the big quantum group, as defined by Lusztig, the latter generated by the divided powers of the usual Chevalley generators. 

Let $\Pg$ denote the weight lattice and $\Qg$ the root lattice for $\g$, and $W$ be the (finite) Weyl group of $\g$. The set of roots of $\g$ will be written as $\Phi$, which contains the positive roots $\Phi^+$ once a set of simple roots $\Pi$ is fixed. Let $\Pg^+$ denote the set of dominant integral weights. The set of \emph{$l$-restricted dominant integral weights}
 $$\Pg_l := \{ \mu \in \Pg^+ | \langle \mu, \check{\alpha} \rangle \leq l-1, \;\; \forall \alpha \in \Pi \}$$
  is in bijection with the set of the isomorphism classes of finite dimensional simple $\ul_q(\g)$-modules \cite{LusfdHopf}. 
Let $\rho = \frac{1}{2} \sum_{\alpha \in \Phi^+} \alpha$ be the half sum of the positive roots and consider the $\rho$-shifted   
``dot'' action of the extended affine Weyl group $\widetilde{W}_\Pg = W \ltimes l\Pg$ on $\Pg$.  Then $\ul_q(\g)$ 
decomposes as a direct sum of two-sided ideals (blocks) parametrized by the orbits of the shifted extended affine Weyl group action in $\Pg_l$.  

Let  $\mu \in \Pg_l$ be an $l$-restricted dominant integral weight and consider the only block  
in $\ul_q(\g)$ that contains the simple module of the highest weight $\mu$ with respect to the left-multiplication $\ul_q(\g)$-action. 
If the stabilizer of $\mu$ with respect to the dot action of $\widetilde{W}_\Pg$ is trivial, then the 
center of the considered block is isomorphic as an algebra to the center $\zl^0$ of the principal block.  In \cite{LQ1} we calculated the structure of  
$\zl^{0}$ for $\g = \mathfrak{sl}_3$ and $\s_4$, and proposed a conjecture that in type $A_n$, as a bigraded vector space,  
$\zl^{0}$ is isomorphic to Haiman's diagonal coinvariant algebra \cite{Hai} for the group $S_{n+1}$.  

Now suppose that $\mu \in \Pg_l$ has a nontrivial stabilizer in $\widetilde{W}_\Pg$ with respect to the dot action. 
 Let 
 $$\bar{\Cg} = \{\mu \in \Pg : 0 \leq \langle \mu + \rho, \check{\alpha} \rangle \leq l, \;\; \forall \alpha \in \Phi^+\}$$
denote the closed fundamental alcove, and  $\bar{\Cg}_{\rm sing}$ denote the union of the walls of $\bar{\Cg}$ containing the weight $-\rho$, so that 
 $$\bar{\Cg}_{\rm sing} =  \{ \nu \in \Pg : \;  \exists \, \alpha \in \Phi^+ : \; \langle \nu + \rho, \check{\alpha} \rangle = 0 \} \cap \bar{\Cg}.$$
Choose a weight  $\lambda \in \widetilde{W}_\Pg \cdot \mu \,  \cap \, \bar{\Cg}_{\rm sing}$.  
Then $\lambda$ is a singular weight with respect to the dot action of the finite Weyl 
group $W$, and it lies on the walls of the closed fundamental alcove. 
We will denote by $\ul^\lambda$ the block of $\ul_q(\g)$ containing the simple module of highest weight $\mu \in 
\widetilde{W}_\Pg \cdot \lambda$  as a composition factor.  Note that in general the choice of $\lambda$ is not unique. 
 Our goal is to develop a method to investigate the bigraded structure of the center 
 $\zl^{\lambda}$ of  $\ul^{\lambda}$. 

The paper is organized as follows. We start Section \ref{sec-parabolic-Springer} with a generalization to the singular block case of two crucial results stated in \cite{ABG, BeLa}.  We establish 
the equivalence between the singular block of the (big and the small) quantum groups, on one side, and 
certain categories of coherent sheaves on the parabolic Springer resolution (Theorem \ref{BeLa_cat} and Corollary \ref{BeLa_sing}). Our approach is based on \cite{BaKr, BaKrSing}.  Then, following the method of \cite{LQ1},  we provide an explicit description of the equivariant structure of the push-forward sheaves that appear in Corollary \ref{BeLa_sing}. The result is formulated in Theorem \ref{thm-equ-structure-V1}  and Corollary \ref{cor-wedge-P-structure}. This allows us to use the relative Lie algebra 
 cohomology approach, given by Lemma \ref{lem-reduce-to-flag}, Theorem \ref{Bott_rel_lie} and Proposition \ref{rel_lie_BGG}, to compute the structure of the center of a singular block.  Finally, Corollary \ref{cor-sl2-subalgebra}
 describes the structure of the largest known subalgebra 
 in a (singular) block of the center. It is generated by the Harish-Chandra part of the center 
 (which is isomorphic to the parabolic coinvariant algebra) and an element that corresponds in the geometric setting 
 to the Poisson bi-vector field on the parabolic Springer resolution. This statement is parallel to the case of the principal block 
 $\zl^0$ stated in \cite{LQ1}.

 Section \ref{sec-examples} contains results on the structure of the center of the singular blocks in some particular cases. Theorem \ref{thm-diagonal-ones} 
states that all zeroth sheaf cohomologies appearing in the description of the center of any block are spanned by the powers of  the Poisson bivector field.     
 Corollary \ref{cor-algebra-structure} provides the structure of the singular block of the center $\zl^{\lambda}$ for $\lambda$ corresponding to the parabolic subgroup $P = P_\lambda$ such that $G/P = \PS^n$.  These results allow us to describe every singular block in the center of the small quantum group $\ul_q(\mathfrak{sl}_3)$. Subsection 3.3 contains a computation of the dimensions of the singular blocks in case $\g = \s_4$. Together with the results for the dimension of the regular block $\zl^0$ for $\ul_q(\mathfrak{sl}_3)$ and $\ul_q(\s_4)$ obtained in \cite{LQ1}, this leads to a dimension formula for the whole center 
 of the small quantum group in these two cases, which is polynomial in $l$.  Based on this evidence,  we conjecture that the dimension of the center of $\ul_q(\s_n)$ at the $l$-th root of unity, with some mild conditions on $l$, is given by the Catalan number $c_{(n+1)l-n, n}  =  \frac{1}{(n+1)l} {{(n+1)l}\choose{n}}$ (see Conjecture \ref{conj-main}).

Appendix \ref{sub_grassmannian} contains a detailed computation for a particular singular block of $\ul_q(\s_4)$.
In Appendix \ref{app-b2} we present a computation of the dimension of the center for the regular block in type 
$B_2$. This example shows that 
our conjecture for the center of the regular block of small quantum group in type $A$ does not generalize directly 
to other types.

\paragraph{Acknowledgments.} The authors would like to thank Sergey Arkhipov, Roman Bezrukavni-kov, Peng Shan, Chenyang Xu and Zhiwei Yun for many helpful discussions. Special thanks are due to Igor Frenkel for suggesting an inspiring formula for the dimension of the center for small quantum groups in type $A$, and to Bryan Ford for his valuable help with coding the computations in type $A_3$. 

Y.~Q.~is partially supported by the National Science Foundation DMS-1763328.

\section{Parabolic Springer resolution and the tangent bundle}\label{sec-parabolic-Springer}
We fix the following notations. Let $G$ be a complex (almost) simple and simply-connected algebraic group, and $P$ be a parabolic subgroup.  Define $N_P$ to be the unipotent subgroup in $P$, and $L_P$ to be the Levi subgroup, so that $P=L_PN_P$. Also fix a Cartan and Borel subgroup such that $H\subset B \subset  P$. The opposite unipotent group to $N_P$ will be denoted by $U_P$. The corresponding Lie algebras of the respective groups will be denoted by the corresponding Gothic letters.
\begin{equation}
\begin{array}{c}\label{eqn-lie-algebras}
\g: =\mathrm{Lie}(G), \quad \p:=\mathrm{Lie}(P), \quad \h: =\mathrm{Lie}(H), \quad \lb:=\mathrm{Lie}(B),\\
\mathfrak{l}_P: =\mathrm{Lie}(L_P), \quad \ \ \n_P:=\mathrm{Lie}(N_P), \quad \ \  \lu_P:=\mathrm{Lie}(U_P). 
\end{array} 
\end{equation}
The Weyl group of $G$ will be written as $W$, and similarly $W_P$ for that of the Levi $L_P$.

Let $X:=G/P$ be the associated partial flag variety, and set
\begin{equation}\label{eqn-Springer-variety}
\Nt_P:=T^*X
\end{equation} 
to be the cotangent bundle of $X$. It is a holomorphic symplectic variety with a canonical symplectic form $\omega$. The action of $G$ on $\Nt_P$ is well-known to be Hamiltonian (see, for instance, \cite{CG} for more details).

Let $\kf$ be a base field (algebraically closed of characteristic zero), $E$ be a $\kf$-vector space and $\Phi \in E$ the root system of $\g$, which contains the  \emph{positive roots} $\Phi^+$ once a set of simple roots 
$\Pi = \{ \alpha_i\}_{i \in I}$ is fixed. Let $\check{\alpha} \in \check{\Phi} \subset E^*$ denote the coroot corresponding to a root $\alpha \in \Phi$, so that $\langle \check{\alpha}_i, \alpha_j \rangle = a_{ij}$ is the Cartan matrix. 

The Weyl group $W$ acts naturally on the \emph{root lattice} $\Qg$, \emph{weight lattice} $\Pg$ and the \emph{coweight lattice} $\check{\Pg}$:
$$\Qg = \bigoplus_{i \in I} \Z \alpha_i\quad \quad \Pg = \{ \mu \in E^* : \langle \mu, \check{\alpha}_i \rangle \in \Z , \forall i \in I\}, \quad \quad \check{\Pg}:= \op{Hom}(\Qg, \Z) \subset E. $$ 
There is a unique $W$-invariant inner 
product $(-,-):\Qg \times \Qg \to \Q$ such that $(\alpha_i, \alpha_i) = 2 d_i $, for all $ i \in I$, where $d_i \in \{ 1,2,3\}$, and the 
matrix $(d_i a_{ij})$ is symmetric. Recall that a weight $\lambda$ is called \emph{dominant} if $\langle \lambda, \check{\alpha}_i \rangle\geq 0$. To such a weight is attached the finite-dimensional simple $G$-representation $L_\lambda$.

Let $\Ul_v(\g)$,  or just $\Ul_v$ if $\g$ is clear, be the Drinfeld-Jimbo quantum group over $\Q(v)$, where $v$ is the formal parameter. It is generated by the Chevalley generators $\{ F_i, E_i, K_i^{\pm 1} \}_{i \in I}$
and the well-known relations (see, e.g., \cite{Lus4, dCKac}).

Throughout, we will fix $l$ to be a prime number  
that is greater than the Coxeter number of $\g$.\footnote{The restriction to the prime $l$ is determined by the fact that the main results in \cite{BaKr,BaKrSing} are obtained by reduction to the modular case. We believe that most of our results will hold for any odd $l$ greater than the Coxeter number and coprime to $3$ if $\g$ has a component of type $G_2$. The proof should be based then on the singular analog of the equivalence of categories in \cite{ABG} obtained in \cite{ACR}.}  Our condition implies in particular that  $l$ is coprime to the determinant of the Cartan matrix of $\g$, which facilitates the description of the block structure 
of the small quantum group (see Proposition \ref{prop_number_blocks}) and allows us to formulate Conjecture \ref{conj-main}. 

\subsection{Singular blocks of small quantum groups}\label{sec-small-q-group}
Below we derive a geometric description of the singular blocks of the center, analogous to the result in \cite[Section 2.4]{BeLa}.

\paragraph{The small quantum group.} 
 Let $\Ul_v^{\op{res}}$ be the $\Q[v, v^{-1}]$-subalgebra 
of $\Ul_v$ generated by the divided powers 
$E_i^{(n)} = \frac{E_i^n}{[n]_{d_i}!}$, $F_i^{(n)} = \frac{F_i^n}{[n]_{d_i}!}$, where $i \in I$, 
$n \geq 1$. 
 Here $[n]_{d}! = \prod_{k=1}^n \frac{q^{dk} - q^{-dk}}{q - q^{-1}}$, and $d_i = \frac{1}{2} (\alpha_i, \alpha_i)$.
Lusztig obtained a description of $\Ul_v^{\op{res}}$ in terms of generators and relations, and, in particular, he defined in 
$\Ul_v^{\op{res}}$ the elements $E_\alpha^{(n)}, F_\alpha^{(n)}$ corresponding to all positive roots $\alpha \in \Phi^+$ and $n \geq 1$ (see \cite{LusfdHopf, Lusrootsof1}). 
Let $\Ul_q^{\op{res}} = \Ul_v^{\op{ res}} \otimes_{\Q[v, v^{-1}]} \Q(q)$, where $q$ is a primitive $l$-th root of unity. 
Then we have 
$E_\alpha^l =0$, $F_\alpha^l =0$ for all $\alpha \in  \Phi^+$, $K_i^l$ are central and $K_i^{2l} =1$ for $i \in I$ in 
$\Ul_q^{\op{res}}$. The {\it adjoint small quantum group of type {\bf 1}}, denoted $\ul \equiv \ul_q(\g)$, is the $\Q(q)$- subquotient of $\Ul_q^{\op{res}}$, 
generated by $\{ E_i, F_i, K_i^{\pm 1}  \}_{i \in I}$ and $\frac{K_i - K_i^{-1}}{q^{d_i} - q^{-d_i}} , i \in I$, and  factorized over the ideal 
generated by $\{ K_i^l -1\}_{i \in I}$. If $\g$ is simply laced, $\ul$ has the dimension $l^{\mathrm{dim}\g}$ over $\Q(q)$.  

Let $\lambda\in \Pg$ be an integral weight in the closed fundamental alcove 
\begin{equation}
\bar{\Cg} = \{\mu \in \Pg : 0 \leq \langle \mu + \rho, \check{\alpha} \rangle \leq l, \;\; \forall \alpha \in \Phi^+\},
\end{equation}
and consider the union of the walls of $\bar{\Cg}$ containing the weight $-\rho$:  
\begin{equation}
\bar{\Cg}_{\rm sing}: = \bar{\Cg} \; \cap \; \{ \nu \in \Pg : \;  \exists \alpha \in \Phi^+ : \; \langle \nu + \rho, \check{\alpha} \rangle = 0 \} .
\end{equation}
In this paper we will call the weights in $\bar{\Cg}_{\rm sing}$ the \emph{singular weights}. 
If $\lambda \in \bar{\Cg}_{\rm sing} $, then $\lambda$ has a nontrivial stabilizer group with respect to the dot action of the finite Weyl group $W$. Let $P(=P_\lambda)$ be the parabolic subgroup whose parabolic roots equal to the singular roots of $\lambda$. Then the Weyl group $W_P$ for $L_P$ can be identified with the stabilizer group  with respect to the dot action of $\lambda$ in $W$.

Let $\mathrm{Rep}(\ul)$ be the category of finite-dimensional $\ul_q(\g)$ modules. Then 
for a fixed $\lambda   \in \bar{\Cg}_{\rm sing}$, 
 the block $\mathrm{Rep}(\ul)^{\lambda}$ is the indecomposable abelian subcategory of the category 
of finite dimensional $\ul_q(\g)$-modules such that $M \in  \mathrm{Rep}(\ul)^{\lambda}$ has no simple subquotients with highest weights outside of the set $\widetilde{W}_\Pg \cdot \lambda \cap \Pg_l$.

\paragraph{The De Concini-Kac quantum group.} 
 Let $\Ul_v^{\op{sc}}(\g)$ denote the extended
version of the quantum group $\Ul_v(\g)$, obtained by adjoining the generators $K_\mu , \mu \in \check{\Pg}$, satisfying the following relations: 
\[  K_\mu K_\nu = K_{\mu + \nu} ,  
\quad  K_\mu E_i  K_\mu^{-1} = v^{\langle \mu, \check{\alpha}_i \rangle} E_i , 
\quad K_\mu F_i  K_\mu^{-1} = v^{-\langle \mu, \check{\alpha}_i \rangle} F_i .\] 

Then we have $K_i := K_{d_i \check{\alpha}_i}$.
The {\it De Concini-Kac quantum group} $\Ul \equiv \Ul^{\op{DCK}}$ is obtained from $\Ul^{\op{sc}}(\g) $ by the base change: 
\[ \Ul \equiv \Ul^{\op{DCK}} = \Ul^{\op{sc}} (\g) \otimes_{\Q[v, v^{-1}]} \Q(q) , \]
sending $v$ to a primitive $l$th root of unity $q \in \C$. 

Let $\Zl$ denote the center of $\Ul$. Then $\Zl$ contains the Harish-Chandra center $\Zl_{HC}$, obtained from the center of 
the generic quantized enveloping algebra by specialization, and the $l$-center $\Zl_{l}$, generated by the $l$-th powers 
of the generators $E_i^l, F_i^l, K_\mu^{\pm l}$ where $i \in I$ and $\mu \in \check{\Pg}$. In fact, by the work of \cite{dCKac}, one has $\Zl = \Zl_{l} \otimes_{\Zl_{l} \cap \Zl_{HC} } \Zl_{HC}$.  


\paragraph{Singular block in the representation category over $\ul$.} 
For $\lambda \in \bar{\Cg}$ denote by $\chi_\lambda : Z_{HC} \to \Q(q)$ the central character given by the specialization at the root of unity of the projection of $Z_{HC}$ to the Cartan part of $\Ul$, evaluated at 
 (any of the) highest weights in the set $(W \ltimes l\Qg) \cdot \lambda \cap \Pg^+$.  Let $\mathrm{Rep}(\Ul)^{{\lambda}}$ (resp.~$\mathrm{Rep}(\Ul)^{\hat{\lambda}}$) denote the full subcategory of the category of the finite dimensional $\Ul$-modules killed by the ideal determined by the central character $\chi_\lambda$ 
(respectively, by some power of this ideal). For example, if $\lambda =0 \in \bar{\Cg}$,  $\mathrm{Rep}(\Ul)^{0}$ contains the modules killed by the augmentation ideal in $Z_{HC}$. 

 Similarly, let $\mathrm{Rep}(\Ul)_0$ (resp. $\mathrm{Rep}(\Ul)_{\hat{0}}$) denote the full subcategory of $\Ul$-modules killed 
 by the augmentation ideal in the $l$-center $\Zl_l$ (respectively, by some power of this ideal). 
 Then the category $\mathrm{Rep}(\Ul)_0$  is equivalent to the representation category of modules over the finite-dimensional algebra obtained by factoring $\Ul$ over the augmentation ideal in $\Zl_l$, which differs from $\ul$ only in the Cartan part. It follows that we can identify the block over $\ul$ corresponding to the weight $\lambda$, which we denote by $\mathrm{Rep}(\ul)^\lambda$, as 
\begin{equation}
\mathrm{Rep}(\ul)^{\lambda}=\mathrm{Rep}(\Ul)_0^{\hat{\lambda}}.
\end{equation}

In particular, we have for the principal block $\mathrm{Rep}(\ul)^{0}=\mathrm{Rep}(\Ul)_0^{\hat{0}}$. In this paper, however, we will concentrate our attention on the case $\lambda \in \bar{\Cg}_{\rm sing}$.

\paragraph{Geometric description of the singular blocks.} 
In this subsection, we closely follow the arguments of  \cite[Section 2.4]{BeLa}.

The parabolic \emph{Grothendieck-Springer resolution} in  $\widetilde{\g}_P=G\times_P\p$, which fits into the short exact sequence
\begin{equation}
0 \lra G\times_P \n_P \lra G\times_P \p \lra G\times_P \mathfrak{l}_P\lra 0,
\end{equation}
so that $\Nt_P\subset \widetilde{\g}_P$ as a closed subvariety. Furthermore, $\widetilde{\g}_P$ can also be described as
\begin{equation}
\widetilde{\g}_P=\{
(Q,x)| ~Q\in G/P,~x\in \mathrm{Lie}(Q) \subset \g
\},
\end{equation}
where we have interpreted $Q$ as parabolic subgroups in $G$ that are isomorphic to $P$. 
As such, it is equipped with a natural $G$-equivariant projection map
\begin{equation}
\nu:\widetilde{\g}_P\lra \g ,\quad (Q,x)\mapsto x.
\end{equation}
so that $\widetilde{\g}_P$ sits inside the trivial bundle $G\times_P \g\cong \g\times X$ as a subbundle. We have the commutative diagram:
\begin{equation}
\begin{gathered}
\xymatrix{
\widetilde{\g}_P\ar[r]^-{\iota}\ar[d] & \g\times X \ar[r]^-{\pi_1}\ar[d] & \g\ar[d]\\
X\ar@{=}[r]\ar@/_1pc/[u]_{0} & X\ar[r]\ar@/_1pc/[u]_{0} & \mathrm{pt}\ar@/_1pc/[u]_{0}
}
\end{gathered} \ .
\end{equation}
Here the $0$s stand for the zero sections of the (vertical) bundle projections, and $\nu=\pi_1\circ \iota$.

Following the main results of \cite{BaKr,BaKrSing}(see also \cite{BMR1, BMR2} for the modular case), one can obtain the following geometric description of the derived category of the singular block of 
\begin{equation}
D^b(\mathrm{Rep}(\Ul)^{\hat{\lambda}}_{\hat{0}})\cong D^b(\mathrm{Coh}_{X}(\widetilde{\g}_P))
\end{equation} 
where $P$ is the parabolic subgroup whose Weyl group $W_P \subset W$ stabilizes $\lambda$, and   $\mathrm{Coh}_{X}$ stands for the category of coherent sheaves set-theoretically supported on the zero section of the canonical projection map
\begin{equation}
\mathrm{pr}: \widetilde{\g}_P\lra X.
\end{equation}

Under the functor $\mathbf{R}\mathrm{Hom}_{D^b(\mathrm{Coh}_{\tilde{\g}_P})}(\mathcal{O}_X,-)$ the triangulated category $D^b(\mathrm{Coh}_{X}(\widetilde{\g}_P))$ is equivalent to the derived category of differential graded (DG) coherent sheaves on the DG scheme, known as the \emph{derived zero fiber}
$$\widetilde{\g}_P\times^{\mathbf{L}}_{\g\times X}(\{0\}\times X)\cong \widetilde{\g}_P\times^{\mathbf{L}}_{\g}\{0\},$$
whose structure sheaf of DG algebras equal to $\mathcal{O}_{\widetilde{\g}_P}\otimes_{\mathcal{O}_\g}^{\mathbf{L}}\mathcal{O}_{0}$. To find a representative for this structure sheaf of DG algebras, we can resolve the structure sheaf of the zero section $\{0\}\times X$ inside $\g\times X$ (e.g. use the usual Koszul resolution $\wedge^\bullet\g\otimes_\C S^\bullet(\g^*)\otimes_\C \mathcal{O}_X\cong \wedge^\bullet\g\otimes_\C \mathcal{O}_{\g\times X}$), and take the usual tensor product of $\iota_*(\mathcal{O}_{\widetilde{\g}_P})$ with the resolution over $\mathcal{O}_{\g\times X}$. Similarly as done in \cite[Section 2.4]{BeLa}, one computes that the sheaf of DG algebras is canonically quasi-isomorphic to the sheaf of DG algebras
\begin{equation}
G\times_P(\wedge^\bullet(\g/\p)^*)\cong \wedge^\bullet \Omega_{G/P}^1[1]
\end{equation}
with the zero differential, which is a version of the classical Hochschild-Rosenberg-Kostant Theorem.

%

It now follows by combining the discussions above that the category $D^b(\mathrm{Rep}(\ul)^{\lambda})$ can be identified with 
\begin{equation}
D^b(\mathrm{Rep}(\ul)^{\lambda})\cong D^{b}(\mathrm{Rep}(\Ul)^{\hat{\lambda}}_{0})\cong D^b_{\mathrm{DG}}(\mathrm{Coh}(\Omega_{X}[1])).
\end{equation}
%
%
Together with the usual Koszul duality between polynomial-exterior algebras, we obtain
\begin{equation}
D^b(\mathrm{Rep}(\ul)^\lambda)\cong D^b_{DG}(\mathrm{Coh}(\wedge^\bullet \Omega_{X}^1)[1])\cong D^b_{DG}(\mathrm{Coh}(S^\bullet TX)[-2])
\end{equation}

The relative $\mathrm{Spec}$ of the sheaf of commutative algebras $S^\bullet TX[-2]$ over $X$ is nothing but $\Nt_P$ with a $\C^*$-action rescaling the linear fibers according to the character $z\mapsto z^{-2}$. 

This discussion leads to the following theorem, which is the singular block analogue of the main result of \cite[Theorem 4]{BeLa}.

\begin{theorem}  \label{BeLa_cat} 
Let $\lambda \in \Cg_{\rm sing}$ be an integral weight of $G$, singular with respect to the dot action of the Weyl group 
$W$, and $P$ the parabolic subgroup whose Weyl group $W_P \subset W$ stabilizes $\lambda$. Then there is an equivalence of triangulated categories
\[
D^b(\mathrm{Rep}(\ul)^\lambda)\cong D^b(\mathrm{Coh}_{\C^*} (\Nt_P)).
\]
\end{theorem}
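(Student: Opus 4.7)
The plan is to adapt the strategy of \cite[Section 2.4]{BeLa} for the principal block to the singular setting, replacing the full flag variety $G/B$ throughout by the partial flag variety $X = G/P$, where $P$ is the parabolic subgroup whose Weyl group $W_P \subset W$ stabilizes $\lambda$ under the dot action. The starting point is the identification $\mathrm{Rep}(\ul)^{\lambda} = \mathrm{Rep}(\Ul)_0^{\hat{\lambda}}$ already recalled above, which reduces the problem to giving a geometric description of the derived category of the corresponding block of the De Concini-Kac algebra.

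Next I would invoke the main result of \cite{BaKr,BaKrSing} to identify
\[
D^b(\mathrm{Rep}(\Ul)^{\hat{\lambda}}_{\hat{0}}) \cong D^b(\mathrm{Coh}_X(\widetilde{\g}_P)),
\]
where $\widetilde{\g}_P = G \times_P \p$ is the parabolic Grothendieck-Springer variety and the right-hand side consists of coherent sheaves set-theoretically supported on the zero section $X \hookrightarrow \widetilde{\g}_P$. This is the essential external input of the argument and is to be cited verbatim; the standing hypothesis that $l$ is a prime greater than the Coxeter number enters precisely here, since the cited equivalences are obtained by reduction to the modular setting of \cite{BMR1,BMR2}.

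Having disposed of the representation-theoretic side, I would then pass to the derived zero fiber by applying $\mathbf{R}\mathrm{Hom}_{D^b(\mathrm{Coh}(\widetilde{\g}_P))}(\mathcal{O}_X,-)$, realising the target as the bounded derived category of DG coherent sheaves on the derived scheme $\widetilde{\g}_P \times^{\mathbf{L}}_{\g} \{0\}$. To identify its structure sheaf of DG algebras, I would Koszul-resolve $\mathcal{O}_{\{0\}\times X}$ inside $\g \times X$ by $\wedge^\bullet \g \otimes_{\C} \mathcal{O}_{\g\times X}$, restrict along the inclusion $\iota : \widetilde{\g}_P \hookrightarrow \g \times X$, and run a parabolic version of the Hochschild-Kostant-Rosenberg computation of \cite[Section 2.4]{BeLa}. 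This should produce a formal quasi-isomorphism with $G \times_P \wedge^\bullet(\g/\p)^* \cong \wedge^\bullet \Omega^1_X[1]$ carrying the zero differential.

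Finally I would invoke the classical Koszul duality between the exterior and symmetric algebras of dual vector bundles to convert $\wedge^\bullet \Omega^1_X[1]$ into $S^\bullet TX[-2]$, and recognise the relative spectrum of the latter as $\Nt_P = T^*X$ equipped with a $\C^*$-action rescaling the fibers by the character $z \mapsto z^{-2}$; this yields the stated equivalence $D^b(\mathrm{Rep}(\ul)^{\lambda}) \cong D^b(\mathrm{Coh}_{\C^*}(\Nt_P))$. The main delicate point will be the HKR-type step, specifically keeping track of the $G$-equivariant and $\C^*$-equivariant structures through each intermediate equivalence so that the final weight $-2$ grading on the cotangent fibers comes out correctly, and verifying that the Koszul resolution restricts cleanly along $\iota$ despite $\widetilde{\g}_P$ being a proper subbundle of $\g \times X$; the remaining links in the chain are either formal citations or the standard Koszul duality, which go through without substantial change in the parabolic case.
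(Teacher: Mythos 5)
Your proposal reproduces the paper's proof step by step: the reduction to $\mathrm{Rep}(\Ul)_0^{\hat{\lambda}}$, the citation of \cite{BaKr,BaKrSing} for the equivalence with $D^b(\mathrm{Coh}_X(\widetilde{\g}_P))$, the passage to the derived zero fiber via $\mathbf{R}\mathrm{Hom}(\mathcal{O}_X,-)$, the Koszul resolution and parabolic HKR computation identifying the structure DG-algebra with $\wedge^\bullet\Omega_X^1[1]$, and finally the exterior--symmetric Koszul duality landing on $S^\bullet TX[-2]$ and hence on $\Nt_P$ with the weight $-2$ fiber action. This is exactly the route taken in the paper, including the identification of the HKR step and the $\C^*$-grading bookkeeping as the places requiring care.
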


We record the consequence concerning the center of the singular blocks of $\ul$. This is the corresponding singular block analogue of \cite[Theorem 7]{BeLa} whose proof parallels that in \cite{BeLa}.

\begin{corollary} \label{BeLa_sing}
Let $\lambda \in \Cg_{\rm sing}$ be an integral weight of $G$, singular with respect to the dot action of $W$, and $P$ the parabolic subgroup whose Weyl group $W_P \subset W$ stabilizes $\lambda$. Then the Hochschild cohomology of the singular block $\mathrm{Rep}(\ul)^{\lambda}$ is isomorphic to the Hochschild cohomology ring of the variety $\Nt_P$:
\[
 \mathrm{HH}_{\C^*}^\star(\Nt_P)\cong\bigoplus_{i+j+k=\star}\mH^i(\Nt_P,\wedge^j T\Nt_P)^k.
\]
In particular, the center of the singular block $\zl^{\lambda}:=\zl(\ul^{\lambda})$ is isomorphic to the commutative algebra
\[
\zl^{\lambda}\cong \mathrm{HH}_{\C^*}^0(\Nt_P)\cong \bigoplus_{i+j+k=0}\mH^i(\Nt_P, \wedge^j T\Nt_P)^k.
\]
\end{corollary}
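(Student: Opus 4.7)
The plan is to transfer the Hochschild cohomology computation across the equivalence of Theorem \ref{BeLa_cat} and then apply the Hochschild--Kostant--Rosenberg (HKR) theorem on the geometric side, following the pattern of \cite[Theorem 7]{BeLa} for the principal block.

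First, I would recall that Hochschild cohomology is a derived invariant whenever the equivalence is induced by a Fourier--Mukai type kernel; granting this for the equivalence of Theorem \ref{BeLa_cat}, we reduce to computing $\mathrm{HH}^\star_{\C^*}(\Nt_P)$. For a smooth variety $Y$, Hochschild cohomology is naturally $\mathrm{Ext}^\star_{Y\times Y}(\Delta_*\Ox_Y, \Delta_*\Ox_Y)$, which HKR identifies with $\bigoplus_{i+j=\star}\mH^i(Y, \wedge^j TY)$. For the $\C^*$-equivariant version on $Y = \Nt_P$, the scaling action on the cotangent fibers (by $z \mapsto z^{-2}$, as fixed in the discussion preceding the theorem) endows each sheaf $\wedge^j TY$ with an additional internal grading; the index $k$ simply records this weight, producing the decomposition $\bigoplus_{i+j+k=\star}\mH^i(\Nt_P, \wedge^j T\Nt_P)^k$.

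The statement about the center then follows at once: for any block of an abelian category the center coincides with its degree-zero Hochschild cohomology, so extracting the $\star = 0$ summand yields the desired formula for $\zl^\lambda$.

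The main obstacle is to verify that the equivalence of Theorem \ref{BeLa_cat} does transport Hochschild cohomology in a grading-preserving way. In the framework of \cite{BaKr, BaKrSing}, the equivalence arises from an explicit tilting generator whose DG endomorphism algebra realizes the singular block, and tilting equivalences of this sort are known to preserve Hochschild cohomology together with its internal gradings. Once this point is settled, the argument runs parallel to that of \cite[Section 2.5]{BeLa}, the only novelty being that the parabolic cotangent variety $T^*(G/P)$ replaces the full Springer resolution $T^*(G/B)$ throughout.
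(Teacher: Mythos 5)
Your proposal is correct and takes essentially the same route the paper takes: the paper itself states only that the proof of Corollary~\ref{BeLa_sing} "parallels that in \cite{BeLa}," meaning one transports Hochschild cohomology across the derived equivalence of Theorem~\ref{BeLa_cat} and then applies the $\C^*$-equivariant HKR decomposition on $\Nt_P$, with the center recovered as $\mathrm{HH}^0$. Your added remark that the equivalence in \cite{BaKr,BaKrSing} comes from a tilting generator—so the transport of Hochschild cohomology and its internal grading is justified—is exactly the point being tacitly invoked by the reference to \cite{BeLa}.
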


\subsection{The equivariant structure}\label{sec-equivariant-structure}
We start by considering the $G$-equivariant projection map 
\begin{equation}\label{eqn-pr}
\mathrm{pr}: \Nt_P\lra X.
\end{equation}
Upon differentiation, the $G$-actions induce a map of vector bundles that fit into the following diagram:
\begin{equation}\label{dgm-basic-diagram}
\begin{gathered}
\xymatrix{
\g\times \Nt_P\ar[r]\ar[d]_{\mathrm{Id}\times \mathrm{pr}} &  T\Nt_P\ar[d]^{d(\mathrm{pr})},\\
\g\times X\ar[r] & TX
}
\end{gathered} \ .
\end{equation}
Here $d(\mathrm{pr})$ stands for the total differential of the projection map. Notice that the lower horizontal part of Diagram \eqref{dgm-basic-diagram} is the middle-right part of the $G$-equivariant bundle sequence on $X$:
\begin{equation}\label{eqn-X-tan-bundle-sequence}
0\lra G\times_P \p\lra G\times_P\g \lra G\times_P\lu_P\lra 0.
\end{equation}
This is because $G\times_P \g$ is, in fact, a (non-equivariantly) trivial bundle pulled back from the trivial bundle $G\times_G \g\cong \{\mathrm{pt}\}\times \g$ over a point.

By adjunction, Diagram \eqref{dgm-basic-diagram} gives us a map of $G$-equivariant bundles on $\Nt_P$
\begin{equation}\label{dgm-com-sq-N}
\begin{gathered}
\xymatrix{
\g\times \Nt_P\ar[r]\ar@{=}[d] &  T\Nt_P\ar[d],\\
\mathrm{pr}^*(\g\times X)\ar[r] & \mathrm{pr}^*(TX)
}
\end{gathered} \ .
\end{equation}\label{eqn-N-tan-bundle-sequence}
The right-hand vertical arrow in the diagram is also part of a natural sequence of equivariant bundles on $\Nt_P$: tangent vectors that have zero projection onto $\mathrm{pr}^*(TX)$ must be tangential to the fibers of the projection map $\mathrm{pr}$. The sequence is thus equal to
\begin{equation}
0\lra \mathrm{pr}^* (G\times_P\n_P) \lra T\Nt_P \lra \mathrm{pr}^* (G\times_P \lu_P) \lra 0,
\end{equation}
where we have identified $TX\cong G\times_P \lu_P$ and $T^*X\cong G\times_P\n_P$ as equivariant $G$-bundles on $X$.

Combining Diagram \eqref{dgm-com-sq-N} and equation \eqref{eqn-N-tan-bundle-sequence} results in a map of short exact sequences of $G$-equivariant vector bundles on $\Nt_P$:
\begin{equation}\label{eqn-map-ses}
\begin{gathered}
\xymatrix{
0 \ar[r] & \mathrm{pr}^*(G\times_P \p)\ar[r]^-{\iota}\ar[d]^{\mathrm{ad}} & \g \times \Nt_P \ar[r]\ar[d] & \mathrm{pr}^*(G\times_P\lu_P)\ar[r]\ar@{=}[d]& 0\\
0 \ar[r] & \mathrm{pr}^*(G\times_P \n_P) \ar[r] & T\Nt_P \ar[r] & \mathrm{pr}^*(G\times_P\lu_P)\ar[r] & 0
}\end{gathered}
\ .
\end{equation}
The equality of on the right-hand term implies that the left square is a push-out. In particular, the tangent bundle $T\Nt_P$ fits into a short exact sequence of $G$-bundles on $\Nt_P$:
\begin{equation}\label{eqn-bundle-sequence-on-Nt}
0 \lra \mathrm{pr}^*(G\times_P \p)\stackrel{\iota\oplus\mathrm{ad} }{\lra} \g\times \Nt_P\oplus \mathrm{pr}^*(G\times_P \n_P) \lra T\Nt_P \lra 0.
\end{equation}

We analyze the map $\mathrm{ad}$ more carefully. The fact that $P$ stablizes the identity coset of $X$ and $\mathrm{pr}$ is $G$-equivariant tell us that $P$ (and thus via differentiation, $\p$) acts on the fiber $\n_P$ of the projection map over the identity coset. The action is no other than the adjoint representation of $P$ (or $\p$) on the unipotent subalgebra $\n_P$, and hence the name $\mathrm{ad}$. By choosing a $G$-equivariant non-degenerate bilinear form on $\g$ (e.g., the Killing form), the map can by identified with the linear map
\begin{equation}
\mathrm{ad}: \p\lra \mathrm{End}(\n_P)\cong \lu_P\otimes \n_P,
\end{equation} 
which is unique up to a nonzero scalar. Here and below, we identify $\n_P$ with its own tangent space, and $\lu_P$ with the space of linear functions on $\n_P$.

Notice that the actions by $G$ on $\Nt_P$ and $X$ commute with the following $\C^*$-actions on $\Nt_P$ and $X$: on $\Nt_P$, the latter group acts on the base $X$ trivially, while it acts by rescaling each fiber of the projection map $\mathrm{pr}$ via the character $z\mapsto z^{-2}$. This allows us to enhance the above $G$-equivariant bundles on $\Nt$ and $X$ to $G\times \C^*$-equivariant ones. In particular, on $X$, such a bundle is just a graded $G$-bundle. For instance, the direct image sheaf.\footnote{Here and throughout the section we abuse the notations and identify bundles with their sheaves of local sections, whenever this does not cause any confusion. } 
$$\mathrm{pr}_*(\mathcal{O}_{\Nt_P})\cong G\times_P S^\bullet(\lu_P)$$
 is a sheaf of graded algebras, with the space $\lu_P$ identified with linear functions on the fiber $\n_P$ of $\mathrm{pr}$ over the identity coset. 

Pushing the sequence \eqref{eqn-bundle-sequence-on-Nt} down to $X$, we obtain the following equivariant bundle description of $\mathrm{pr}_*(T\Nt_P)$ as a graded module over $G\times_P S^\bullet(\lu_P)$.  We consider the composition map
\begin{equation}\label{eqn-Delta-map}
\Delta: S^\bullet(\lu_P)\otimes \p\stackrel{\alpha}{\lra} S^\bullet (\lu_P)\otimes \g \oplus S^{\bullet}(\lu_P)\otimes \lu_P\otimes\n_P \stackrel{\beta}{\lra} S^\bullet (\lu_P)\otimes \g \oplus S^{\bullet+1}(\lu_P)\otimes\n_P.
\end{equation}
Here $\alpha=\mathrm{Id}_{S^\bullet(\lu_P)}\otimes (\iota, \mathrm{ad})$ and $\beta$ is given by contracting linear functions in $\lu_P$ with the polynomials in $S^\bullet(\lu_P)$. In other words, $\beta$ is the symmetrization map from $S^\bullet(\lu_P)\otimes \lu_P$ onto $S^{\bullet+1}(\lu_P)$.

\begin{theorem}\label{thm-equ-structure-V1}
Let $G$ be a complex simple Lie group and $P$ be a parabolic subgroup. Denote by $X=G/P$ the associated partial flag variety and by $\Nt_P$ the cotangent bundle of $X$. Then the pushforward bundle $\mathrm{pr}_*(T\Nt_P)$ along the canonical projection map $\mathrm{pr}:\Nt_P\lra X$ is isomorphic to the infinite-rank graded $G$-equivariant bundle
\[
\mathrm{pr}_*(T\Nt_P) \cong G\times_P\frac{S^\bullet (\lu_P)\otimes \g \oplus S^{\bullet}(\lu_P)\otimes \n_P}{\Delta (S^\bullet(\lu_P)\otimes \p)},
\]
where elements in $S^k (\lu_P)$ ($k\in \N$) have degree $2k$, while $\mathrm{deg}(\g)=\mathrm{deg}(\p)=0$ and $\mathrm{deg}(\n_P)=-2$.
\end{theorem}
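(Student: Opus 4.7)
The plan is to apply $\mathrm{pr}_*$ to the short exact sequence \eqref{eqn-bundle-sequence-on-Nt} on $\Nt_P$ and identify the resulting map of pushforward sheaves with the composite $\Delta$ defined in \eqref{eqn-Delta-map}. Since $\mathrm{pr}:\Nt_P\lra X$ is the bundle projection of a vector bundle, it is an affine morphism, so $R^i\mathrm{pr}_*=0$ for $i>0$ on coherent sheaves. Pushing \eqref{eqn-bundle-sequence-on-Nt} forward therefore preserves exactness and yields a short exact sequence of $G\times\C^*$-equivariant graded coherent sheaves on $X$. By the projection formula together with the identification $\mathrm{pr}_*(\Ox_{\Nt_P})\cong G\times_P S^\bullet(\lu_P)$ already recorded above, the first two terms become
\[
G\times_P(S^\bullet(\lu_P)\otimes\p) \text{ and } G\times_P(S^\bullet(\lu_P)\otimes\g)\oplus G\times_P(S^\bullet(\lu_P)\otimes\n_P),
\]
so the theorem reduces to identifying $\mathrm{pr}_*(\iota\oplus\mathrm{ad})$ with $\Delta$ and then taking cokernel.

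For that identification, the $\iota$-component is transparent: it extends the inclusion $\p\hookrightarrow\g$ by $S^\bullet(\lu_P)$-linearity, giving the first summand in the map $\alpha$ of \eqref{eqn-Delta-map}. For the $\mathrm{ad}$-component, the crucial observation is that an element $\xi\in\p$ acts, through the derivative of the $G$-action, as a vector field on $\Nt_P$ whose restriction to the fiber $\n_P$ over the identity coset is the \emph{linear} vector field $n\mapsto [\xi,n]$. Under the identification $\mathrm{End}(\n_P)\cong \lu_P\otimes\n_P$ used in the excerpt, this linear vector field lies in the degree-one piece $\lu_P\otimes\n_P\subset S^\bullet(\lu_P)\otimes\n_P$ of polynomial vector fields along the fiber. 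Extending by the $\mathrm{pr}_*(\Ox_{\Nt_P})$-module structure amounts to multiplying the $\lu_P$-factor against $S^\bullet(\lu_P)$, which is precisely the symmetrization $\beta$. Propagating this local calculation over $X$ by $G$-equivariance identifies $\mathrm{pr}_*(\iota\oplus\mathrm{ad})$ with $\Delta$.

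The $\C^*$-grading is then read off from the weights: scaling fibers of $\mathrm{pr}$ by $z\mapsto z^{-2}$ places $\n_P$ in weight $-2$ and its dual $\lu_P$ of linear fiber functions in weight $+2$, so $S^k(\lu_P)$ sits in degree $2k$; the base-direction subspaces $\g$ and $\p$ carry weight $0$. The asserted presentation of $\mathrm{pr}_*(T\Nt_P)$ now follows from the exactness of the pushforward sequence. The main obstacle is the middle step: although the vanishing of higher direct images and the projection formula are standard, one must carefully check that the adjoint action of $\p$ on $\n_P$ produces a linear (rather than higher-order polynomial) vector field on the fiber, and that this linearity is what makes tensoring with $\mathrm{pr}_*(\Ox_{\Nt_P})$ convert the fiberwise evaluation of $\mathrm{ad}$ into polynomial multiplication via $\beta$, rather than something more complicated.
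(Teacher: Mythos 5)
Your argument follows the paper's route exactly: push forward the short exact sequence \eqref{eqn-bundle-sequence-on-Nt}, use exactness of $\mathrm{pr}_*$ for the affine morphism, identify the pushed-forward map with $\Delta$ via the projection formula and the identification $\mathrm{pr}_*(\Ox_{\Nt_P})\cong G\times_P S^\bullet(\lu_P)$, and verify the grading; the paper's printed proof simply defers the first steps to the discussion preceding the theorem and only records the grading computation. Your paragraph explaining why the $\mathrm{ad}$-component restricts to a \emph{linear} vector field on the fiber $\n_P$, and hence why extending $S^\bullet(\lu_P)$-linearly realizes the symmetrization $\beta$, is a welcome elaboration of a point the paper leaves implicit in its definition of $\Delta$.
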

\begin{proof}
The proof is mostly contained in the discussions prior to the Theorem. We only need to verify the statement about gradings here. 

Since the $G$-actions on $\Nt_P$ and $X$ commutes with those of $\C^*$. The vector fields generated from $\g$ on $X$, and hence those from the subalgebra $\p$, are of $\C^*$-degree zero. The tangent vectors along $\n_P$ have degree $-2$, and the linear functions $\lu_P$ on $\n_P$ have degree $2$. This shows that the maps $\alpha$ and $\beta$ in \eqref{eqn-Delta-map} are homogeneous, and hence so is $\Delta$.
\end{proof}

We abbreviate the $P$-equivariant $S^\bullet(\lu_P)$-module in the Theorem by $V_1$:
\begin{equation}\label{eqn-V1}
V_1:=\frac{S^\bullet (\lu_P)\otimes \g \oplus S^{\bullet}(\lu_P)\otimes \n_P}{\Delta (S^\bullet(\lu_P)\otimes \p)}.
\end{equation}
It is clear that $V_1$ is free over $S^{\bullet}(\lu_P)$ of rank $2\mathrm{dim}_\C(X)$. Therefore we may form its exterior algebra over the polynomial ring $S^\bullet(\lu_P)$:
\begin{equation}\label{eqn-V-ext-alg}
V_\star:=\bigoplus_{k=0}^{2\mathrm{dim}_\C(X)} \wedge_{S^\bullet(\lu_P)}^{k} V_1.
\end{equation}
The $\C^*$-homogeneous degree-$r$ part of the $k$th wedge power will be denoted by $V_k^{r}$. Note that $r$ only takes value in even integers, with only finitely many of them negative.

\begin{corollary}\label{cor-wedge-P-structure}
The pushforward of the total exterior product of the tangent bundle $\wedge^\star T\Nt_P$ has the $G\times\C^*$-equivariant bundle structure as
\[
\mathrm{pr}_*(\wedge^\star T\Nt_P)\cong G\times_P V_\star,
\]
both sides regarded as sheaves of free $G\times_P S^\bullet(\lu_P)$-modules.
\end{corollary}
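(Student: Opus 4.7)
The plan is to reduce the statement to the rank-one case already established in Theorem \ref{thm-equ-structure-V1} by exploiting two formal facts: pushforward along an affine morphism is an equivalence respecting tensor structure, and the associated-bundle construction $G\times_P(-)$ commutes with algebraic operations like exterior powers. First I would observe that the projection $\mathrm{pr}:\Nt_P\lra X$ is affine, since by definition $\Nt_P=T^*X$ is a vector bundle over $X$, and one has the identification
\[
\mathrm{pr}_*\mathcal{O}_{\Nt_P}\cong G\times_P S^\bullet(\lu_P)
\]
as a sheaf of graded $G\times\C^*$-equivariant $\mathcal{O}_X$-algebras, where the $\C^*$-grading is induced by the fiberwise scaling $z\mapsto z^{-2}$.

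Next, for any quasi-coherent sheaf $\mathcal{F}$ on $\Nt_P$ equipped with a $G\times\C^*$-equivariant structure, the affine pushforward $\mathrm{pr}_*\mathcal{F}$ is a $G\times\C^*$-equivariant quasi-coherent sheaf of modules over $\mathrm{pr}_*\mathcal{O}_{\Nt_P}$, and the assignment $\mathcal{F}\mapsto \mathrm{pr}_*\mathcal{F}$ sets up an equivalence of abelian (in fact monoidal) categories between $G\times\C^*$-equivariant quasi-coherent sheaves on $\Nt_P$ and $G\times\C^*$-equivariant quasi-coherent sheaves of $\mathrm{pr}_*\mathcal{O}_{\Nt_P}$-modules on $X$. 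In particular, tensor products are preserved, so antisymmetrization yields the identification
\[
\mathrm{pr}_*\bigl(\wedge^k T\Nt_P\bigr)\cong \wedge^k_{\mathrm{pr}_*\mathcal{O}_{\Nt_P}}\mathrm{pr}_*(T\Nt_P)
\]
for every $k$, with all $G\times\C^*$-equivariant structures compatible.

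Now I substitute the content of Theorem \ref{thm-equ-structure-V1}, which identifies $\mathrm{pr}_*(T\Nt_P)$ with $G\times_P V_1$ as a $G$-equivariant sheaf of free graded $G\times_P S^\bullet(\lu_P)$-modules. Since the functor $G\times_P(-)$ from $(P,S^\bullet(\lu_P))$-equivariant modules to $G$-equivariant sheaves of $G\times_P S^\bullet(\lu_P)$-modules is tensor and colimit preserving (it is induction along $P\hookrightarrow G$ tensored with the free module functor), it commutes with the formation of exterior algebras over the base ring:
\[
\wedge^k_{G\times_P S^\bullet(\lu_P)}\bigl(G\times_P V_1\bigr)\cong G\times_P \wedge^k_{S^\bullet(\lu_P)}(V_1)=G\times_P V_k.
\]
Assembling these identifications and summing over $k$ from $0$ to $2\dim_\C(X)$ gives the asserted isomorphism $\mathrm{pr}_*(\wedge^\star T\Nt_P)\cong G\times_P V_\star$.

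The only thing requiring attention beyond this formal manipulation is the compatibility of the $\C^*$-grading, which I expect to be the main (and only mild) obstacle. Here the grading of $V_1$ recorded in Theorem \ref{thm-equ-structure-V1} is already fixed so that the quotient map $\Delta$ is homogeneous; exterior powers are then formed in the graded symmetric monoidal category of graded $S^\bullet(\lu_P)$-modules, and the resulting grading on $V_\star$ matches the fiberwise $\C^*$-weights on $\wedge^\star T\Nt_P$ since these weights are already correctly recorded in the case $\star=1$ and are additive under tensor/wedge formation. This finishes the identification of both sides as sheaves of free graded $G\times_P S^\bullet(\lu_P)$-modules.
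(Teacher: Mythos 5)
Your proof is correct and takes essentially the same route as the paper: both arguments rest on the affineness of $\mathrm{pr}$ (so that $\mathrm{pr}_*$ is an exact monoidal equivalence onto $\mathrm{pr}_*\mathcal{O}_{\Nt_P}$-modules), the identification of $\mathrm{pr}_*(T\Nt_P)$ from Theorem \ref{thm-equ-structure-V1}, and the compatibility of the associated-bundle construction $G\times_P(-)$ with tensor and exterior powers over the base ring. The paper spells out the antisymmetrization step slightly more explicitly (writing $\wedge^k T\Nt_P\cong T\Nt_P^{\otimes k}\otimes_{\C[S_k]}\mathrm{sgn}$ and invoking exactness of signed invariants in characteristic zero), but you have the same content packaged into the statement that the affine pushforward and $G\times_P(-)$ are tensor functors, so the two proofs are equivalent.
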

\begin{proof}
Consider the tensor product bundle $ T\Nt_P ^{\otimes k} $ equipped with the right $S_k$-action, which commutes with the $G\times \C^*$-action. Then the $k$th exterior product bundle is isomorphic to 
$$\wedge^kT\Nt_P\cong  T\Nt_P ^{\otimes k}\otimes_{\C[S_k]}\mathrm{sgn},$$ 
where $\mathrm{sgn}$ stands for the usual sign representation of the symmetric group. Since the projection map $\mathrm{pr}$ is affine, $\mathrm{pr}_*$ is an exact functor. Thus we have
\begin{eqnarray*}
\mathrm{pr}_* (T\Nt_P^{\otimes k}) & \cong & \mathrm{pr}_*(T\Nt_P)\otimes_{\mathrm{pr}_*(\mathcal{O}_{\Nt_P})}\mathrm{pr}_*(T\Nt_P)\otimes_{\mathrm{pr}_*(\mathcal{O}_{\Nt_P})} \dots \otimes_{\mathrm{pr}_*(\mathcal{O}_{\Nt_P})}\mathrm{pr}_*(T\Nt_P)\\
& \cong & G\times_P \left(V_1\otimes_{S^\bullet(\lu_P)}V_1\otimes_{S^\bullet(\lu_P)}\dots \otimes_{S^\bullet(\lu_P)} V_1\right),
\end{eqnarray*}
where there are $k$ tensor factors on the right hand side.
The result then follows by taking the signed invariants of $S_k$ and using that this functor is exact in characteristic zero and commutes with the $P\times \C^*$-action.
\end{proof}

\begin{example}\label{eg-some-G-bundle}
Using Corollary \ref{cor-wedge-P-structure}, we can write out the explicit equivariant bundle structure for any $\mathrm{pr}_*(\wedge^r T\Nt_P)^{s}$, with $r\in \N$ and $s\in 2\Z$. To do so, one can first consider the $r$th exterior product of $S^\bullet(\lu_P)\otimes\g\oplus S^\bullet(\lu_P)\otimes\n_P$ over the polynomial ring $S^\bullet(\lu_P)$, then mod out the ideal in the exterior algebra generated by $S^\bullet(\lu_P)\otimes \p$ under the map $\Delta$ (see equation \eqref{eqn-Delta-map}).

For instance, the bundle $\mathrm{pr}_* (\wedge^2 T\Nt_P)^{-2}$ admits the equivariant description as
\[
\mathrm{pr}_* (\wedge^2 T\Nt_P)^{-2} \cong
G\times_P \frac{\g\otimes\n_P\oplus \lu_P\otimes\n_P\wedge\n_P}{\Delta_2(\p\otimes \n_P)},
\]
where $\Delta_2$ is the composition map
\[
\Delta_2: \p\otimes \n_P \xrightarrow{\Delta \otimes \mathrm{Id}_{\n_P}}\g\otimes  \n_P \oplus \lu_P\otimes \n_P\otimes \n_P \stackrel{\alpha}{\lra} \g\otimes  \n_P \oplus \lu_P\otimes \wedge^2 \n_P
\]
where $\alpha$ is the anti-symmetrization map on the $\n_P\otimes \n_P$ factor.

For another example, consider the bundle $\mathrm{pr}_* (\wedge^3 T\Nt_P)^{-4}$. The Corollary gives us that it is isomorphic to
\[
\mathrm{pr}_* (\wedge^3 T\Nt_P)^{-4}\cong G\times_P
\left(
\frac{\g\otimes \n_P\wedge \n_P\oplus \lu_P\otimes \n_P\wedge \n_P\wedge \n_P}{
\Delta_4(\p\otimes \n_P\wedge \n_P)
}
\right).
\]
The map $\Delta_4$ is induced from $\Delta$ as a composition map
\[
\Delta_4: \p\otimes \wedge^2\n_P \xrightarrow{\Delta \otimes \mathrm{Id}_{\wedge^2 \n_P}}\g\otimes \wedge^2 \n_P \oplus \lu_P\otimes \n_P\otimes \wedge^2 \n_P \stackrel{\beta}{\lra} \g\otimes \wedge^2 \n_P \oplus \lu_P\otimes \wedge^3 \n_P
\]
where $\beta$, similar to $\alpha$, is the anti-symmetrization map on the $\n_P\otimes \wedge^2 \n_P$ factor.
\end{example}

Finally, we record the following well-known result to help us reducing the computation of sheaf cohomologies on $G/P$ to sheaf cohomologies on the full flag variety $G/B$.

\begin{lemma}\label{lem-reduce-to-flag}
Let $P$ be a parabolic subgroup containing a Borel $B$ inside a simple Lie group $G$, and let $V$ be a $P$-module. Then there is an isomorphism of $G$-representations
\[
\mH^i(G/P, G\times_P V)\cong \mH^i(G/B,G\times_B V).
\] 
\end{lemma}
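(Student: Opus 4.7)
The plan is to exploit the natural projection $\pi: G/B \to G/P$, which is a locally trivial fiber bundle with fiber $P/B$, and to show that the pushforward of $G \times_B V$ along $\pi$ recovers $G \times_P V$ with no higher cohomology. Since $V$ is a $P$-module (and hence a fortiori a $B$-module), there is a canonical $G$-equivariant isomorphism of vector bundles $G\times_B V \cong \pi^{*}(G\times_P V)$, which one verifies directly on the level of associated bundles by tracing the construction. Under this identification, the computation of $\mH^i(G/B, G\times_B V)$ becomes the computation of $\mH^i(G/B, \pi^{*}(G\times_P V))$.

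Next, I would invoke the projection formula to obtain a $G$-equivariant isomorphism
\[
\mathbf{R}\pi_{*}\bigl(\pi^{*}(G\times_P V)\bigr) \;\cong\; (G\times_P V)\otimes_{\Ox_{G/P}} \mathbf{R}\pi_{*}(\Ox_{G/B}),
\]
so everything reduces to computing $\mathbf{R}\pi_{*}(\Ox_{G/B})$. Because $\pi$ is the $G$-equivariant fiber bundle with fiber the full flag variety $P/B \cong L_P/(B\cap L_P)$ of the Levi $L_P$, the fiberwise cohomology is that of the structure sheaf of a projective flag variety. The classical Borel--Weil--Bott theorem applied to the trivial one-dimensional representation yields that $\mH^0(P/B,\Ox_{P/B}) = \kf$ and $\mH^{>0}(P/B,\Ox_{P/B}) = 0$. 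By flat base change along the local trivializations of $\pi$, this implies $R^j\pi_{*}(\Ox_{G/B}) = 0$ for $j>0$ and $R^0\pi_{*}(\Ox_{G/B}) \cong \Ox_{G/P}$.

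Combining these two ingredients, $\mathbf{R}\pi_{*}(\pi^{*}(G\times_P V)) \cong G\times_P V$, and the Leray spectral sequence
\[
E_2^{p,q} = \mH^p\bigl(G/P, R^q\pi_{*}(\pi^{*}(G\times_P V))\bigr) \;\Longrightarrow\; \mH^{p+q}(G/B, \pi^{*}(G\times_P V))
\]
degenerates at $E_2$, yielding the claimed $G$-equivariant isomorphism $\mH^i(G/P, G\times_P V) \cong \mH^i(G/B, G\times_B V)$.

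The only real technical point is the vanishing $R^{>0}\pi_{*}\Ox_{G/B} = 0$ and the identification $R^0\pi_{*}\Ox_{G/B} = \Ox_{G/P}$; everything else is formal. One can either cite Borel--Weil--Bott on the fiber $P/B$ as above, or argue directly using Kempf vanishing for the trivial line bundle on the partial flag variety $P/B$, which gives the same conclusion. The $G$-equivariance of all the isomorphisms is automatic because every morphism in the argument ($\pi$, the projection formula, the base change, and the Leray spectral sequence) is $G$-equivariant.
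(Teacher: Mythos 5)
Your proposal is correct and follows essentially the same route as the paper: both use the fibration $\pi\colon G/B\to G/P$, identify $G\times_B V$ with $\pi^*(G\times_P V)$, apply the projection formula and Leray spectral sequence, and reduce to the vanishing of $R^{>0}\pi_*\Ox_{G/B}$ together with $R^0\pi_*\Ox_{G/B}\cong\Ox_{G/P}$. The only difference is in the justification of that key vanishing — you invoke Borel--Weil--Bott (or Kempf vanishing) on the fiber $P/B$, whereas the paper simply notes that the fibers are smooth rational varieties and hence have no higher structure-sheaf cohomology; both are standard and equivalent here.
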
 
\begin{proof}
This follows from applying the projection formula and the Leray spectral sequence to the fiber bundle $\nu:G/B\lra G/P$:
\[
\oplus_{i+j=k}\mH^i (G/P, (G\times_PV)\otimes R^j\nu_*(\Ox_{G/B}))\Rightarrow \mH^k (G/B, \nu^* (G\times_P V) )= \mH^k(G/B, G\times_B V).
\] 
The fiber of $\nu$ over the identity coset $eP$ can be identified with the $P/B$. Since the fibers of $\nu$ are all smooth rational varieties, 
 \[
 R^j\nu_*(\Ox_{G/B})\cong 
 \left\{
\begin{array}{cc}
\Ox_{G/P} & j=0,\\
0 & j\neq 0.
\end{array} 
 \right.
 \]
The result follows. The claim about $G$-modules holds because of the functoriality of the projection formula and Leray spectral sequence.
\end{proof}

This lemma allows us to use the same approach as we did in \cite{LQ1} for the regular block, and in particular to apply the theorem of Bott that relates the sheaf cohomology of vector bundles over $X = G/B$ with the relative cohomology of Lie algebras $(\lb, \, \h)$ with coefficients in a certain $\lb$-module.

\begin{theorem}[Bott~\cite{B}] \label{Bott_rel_lie}  Let $E$ be a holomorphic $B$-module, and $W$ a holomorphic $G$-module. 
Let ${\mathcal E}$ be the sheaf of local holomorphic 
sections of the equivariant vector bundle $G\times_B E$ on the flag variety $X$. Then there is an isomorphism of vector spaces
\[ 
\op{Hom}_G \left( W, \; \mH^{\bullet}(X, \, {\mathcal E}) \right) = \mH^{\bullet}(\lb, \, \h, \,  \op{Hom}(W,E) ) =   
 \mH^\bullet(\n, \op{Hom}(W,E))^\h .\]
\end{theorem}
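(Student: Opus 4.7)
The plan is to prove the two equalities in sequence, passing from $G$-equivariant maps of sheaf cohomology to $B$-cohomology, and finally to relative Lie algebra cohomology. First, I would rewrite the left-hand side as invariants and recognize the equivariant bundle. Since $W$ is a (finite-dimensional) rational $G$-module,
\[
\op{Hom}_G(W, \mH^\bullet(X, {\mathcal E})) \;=\; \bigl(W^* \otimes \mH^\bullet(X, {\mathcal E})\bigr)^G \;=\; \mH^\bullet(X, W^* \otimes {\mathcal E})^G,
\]
where the second equality uses that tensoring with the fixed finite-dimensional module $W^*$ commutes with sheaf cohomology, and that the $G$-invariants functor is exact on rational $G$-modules by reductivity of $G$. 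Then I would note that $W^*$, being the pullback of a genuine $G$-module, yields the trivial equivariant bundle on $X$, and therefore
\[
W^* \otimes {\mathcal E} \;\cong\; G \times_B \op{Hom}(W, E)
\]
as $G$-equivariant bundles, where $B$ acts on $\op{Hom}(W,E)$ through the restriction $W\big|_B$ and the given $B$-action on $E$.

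Second, I would reduce the $G$-invariants of sheaf cohomology of an induced bundle to $B$-cohomology. For any rational $B$-module $M$, the sheaf cohomology $\mH^\bullet(X, G\times_B M)$ computes the derived induction $R^\bullet\op{ind}_B^G M$, and Frobenius reciprocity gives, in degree zero, $\op{Hom}_G(\C, \op{ind}_B^G M)= M^B$. Since $(-)^G$ is exact on rational $G$-modules (reductivity of $G$) and injective $B$-modules restrict to $G$-acyclic ones, the Grothendieck spectral sequence for the composition $B\text{-Mod} \xrightarrow{\op{ind}_B^G} G\text{-Mod} \xrightarrow{(-)^G} \op{Vect}$ degenerates into a natural isomorphism
\[
\mH^\bullet(X, G \times_B M)^G \;\cong\; \mH^\bullet(B, M).
\]
Specializing to $M=\op{Hom}(W,E)$ produces the first equality of the lemma.

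Third, I would translate $B$-cohomology into the stated relative Lie algebra cohomology. Writing $B=H\ltimes U$ with $U$ unipotent and $H$ a torus, the Hochschild--Serre decomposition gives $\mH^\bullet(B,N)=\mH^\bullet(U,N)^H$. For a unipotent algebraic group $U$, rational cohomology agrees with Lie algebra cohomology, $\mH^\bullet(U,N)\cong \mH^\bullet(\n,N)$. Since $H$ is reductive, taking $H$-invariants is the same as taking $\h$-invariants on rational modules, and by definition $\mH^\bullet(\n,N)^\h=\mH^\bullet(\lb,\h,N)$. Combining the three steps yields both isomorphisms of the statement.

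The main obstacle is the middle step: rigorously establishing the derived Frobenius reciprocity. One must verify both the exactness of $(-)^G$ on rational $G$-modules (via reductivity) and the fact that the derived functor $R\op{ind}_B^G$ is really represented by $\mH^\bullet(X, G\times_B -)$ — the first is classical in characteristic zero, and the second comes from the affineness of the fibers of $G\to G/B$. The remaining inputs — the bundle identification $W^*\otimes\mathcal{E}\cong G\times_B \op{Hom}(W,E)$ and the Hochschild--Serre splitting for $B=H\ltimes U$ — are standard once these foundational facts are in place.
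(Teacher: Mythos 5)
The paper presents this statement as Bott's theorem with only a citation to \cite{B} and no in-text proof, so there is no internal argument to compare your proposal against. That said, your proof is correct. It follows the standard modern algebraic-group route: rewrite $\op{Hom}_G(W,\mH^\bullet(X,\mathcal{E}))$ as $\mH^\bullet(X, G\times_B\op{Hom}(W,E))^G$ using that $W^*$ pulls back to a trivial equivariant bundle; identify $\mH^\bullet(X,G\times_B M)$ with $R^\bullet\op{ind}_B^G M$ and collapse the Grothendieck spectral sequence for $(-)^B = (-)^G\circ\op{ind}_B^G$ using exactness of $(-)^G$ to get $\mH^\bullet(B,\op{Hom}(W,E))$; and finally use the Hochschild--Serre decomposition for $B=H\ltimes U$, the identification of rational cohomology of the unipotent radical with Lie algebra cohomology $\mH^\bullet(\n,-)$ in characteristic zero, and the equality $\mH^\bullet(\n,N)^\h = \mH^\bullet(\lb,\h,N)$ when $\h$ acts semisimply. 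One small wording slip: you write that ``injective $B$-modules restrict to $G$-acyclic ones'' to justify the existence of the Grothendieck spectral sequence, but the relevant condition is that $\op{ind}_B^G$ carries injectives to $(-)^G$-acyclic objects; this is automatic here precisely because $(-)^G$ is exact, so the conclusion stands. This algebraic-group argument is cleaner in the present setting than Bott's original complex-geometric approach via Dolbeault cohomology and spectral sequences, and it is the form most directly compatible with the BGG-resolution refinement the paper uses in Proposition \ref{rel_lie_BGG}.
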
 

Following \cite{LQ1}, we can further combine it with the combinatorics of the BGG resolution for a simple finite dimensional $\g$-module in order to derive a combinatorial method to compute the Lie algebra cohomology explicitly. The following statement is proven in \cite{LQ1}.

\begin{proposition}  \label{rel_lie_BGG}
Let $E$ be a finite-dimensional $\lb$-module, and $L_\nu$ be the finite-dimensional simple $\g$-module of dominant highest weight $\nu$. Then the $\N$-graded multiplicity space
\[ 
\op{Hom}_G \left( L_\nu, \; \mH^{\bullet}(X, G\times_B E) \right)  
\] 
is given by the cohomology of the complex 
\begin{equation}  \label{BGG_complex}
0\lra E[\nu]\to \cdots \to \bigoplus_{l(w) =j-1} E[w\cdot \nu] \stackrel{d_j^*}{\longrightarrow} \bigoplus_{l(w) =j} E[w\cdot \nu]\to \cdots \to E[w_0\cdot \nu]\lra 0, 
\end{equation}
where $w_0$ stands for the longest element of the Weyl group for $G$ and $j\in\{1,2,\dots, l(w_0)\}$. \hfill$\square$
\end{proposition}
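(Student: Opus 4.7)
The plan is to combine Bott's theorem (Theorem~\ref{Bott_rel_lie}) with the BGG resolution of $L_\nu$. First, applying Theorem~\ref{Bott_rel_lie} with $W = L_\nu$ gives
\[
\mathrm{Hom}_G(L_\nu, \mH^\bullet(X, G\times_B E)) \cong \mH^\bullet(\n, L_\nu^* \otimes E)^{\h},
\]
where $L_\nu^* \otimes E$ carries the diagonal $\lb$-action. This reduces the desired multiplicity computation to one of relative Lie algebra cohomology.

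Next, consider the BGG resolution of $L_\nu$ in category $\mathcal{O}$,
\[
0 \lra \bigoplus_{l(w)=l(w_0)} M_{w\cdot\nu} \lra \cdots \lra \bigoplus_{l(w)=j} M_{w\cdot\nu} \lra \cdots \lra M_\nu \lra L_\nu \lra 0.
\]
Combining this resolution with the Chevalley--Eilenberg complex computing $\mH^\bullet(\n, L_\nu^* \otimes E)^{\h}$ yields a bicomplex whose total cohomology coincides with the right-hand side of the Bott identification. The key input, supplied by Kostant-type arguments, is that each Verma $M_{w\cdot\nu}$ contributes exactly the weight space $E[w\cdot\nu]$ placed in cohomological degree $l(w)$, and the dualized boundary maps from the BGG resolution become precisely the differentials $d_j^*$ appearing in \eqref{BGG_complex}.

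The main technical obstacle is establishing that each Verma contributes the full weight space $E[w\cdot\nu]$, rather than only its $\n$-invariant subspace that a naive application of $\mathrm{Hom}_{(\lb, \h)}(M_{w\cdot\nu}, E)$ would yield; a small example with an indecomposable $\lb$-module over $\mathfrak{sl}_2$ already exhibits this distinction. The derived Chevalley--Eilenberg refinement is therefore essential, and the detailed verification, adapting the classical Kostant argument to twisted coefficients, is carried out in \cite{LQ1}.
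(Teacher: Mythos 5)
Your high-level strategy — Bott's theorem plus the BGG resolution — is precisely the one the paper indicates, deferring the detailed proof to \cite{LQ1}. However, the ``main technical obstacle'' you raise is not an obstacle once the Borel conventions are fixed correctly, and your proposed resolution of it is not the right diagnosis. In the setup that makes the statement true, the Verma modules in the BGG resolution are $M_{w\cdot\nu}=U(\g)\otimes_{U(\lb^+)}\C_{w\cdot\nu}$ for a positive Borel $\lb^+$, while the Lie subalgebra $\lb$ appearing in Bott's theorem is the \emph{opposite} Borel. Restricted to $\lb$, each $M_{w\cdot\nu}$ is isomorphic to the induced module $U(\lb)\otimes_{U(\h)}\C_{w\cdot\nu}$, which is projective in the relative category of $(\lb,\h)$-modules; Frobenius reciprocity then gives
\[
\op{Hom}_{(\lb,\h)}(M_{w\cdot\nu},E)\cong\op{Hom}_{\h}(\C_{w\cdot\nu},E)=E[w\cdot\nu],
\]
the full weight space, not any $\n$-invariant subspace. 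So the BGG resolution restricted to $\lb$ is already a projective resolution of $L_\nu|_\lb$ in the relevant relative category, and applying $\op{Hom}_{(\lb,\h)}(-,E)$ directly produces the complex \eqref{BGG_complex}; no bicomplex, derived Chevalley--Eilenberg refinement, or spectral sequence degeneration argument is needed. It is also worth noting that the identification of the graded pieces is pure Frobenius reciprocity rather than a ``Kostant-type'' argument --- Kostant's theorem on $\mH^\bullet(\n,L_\nu)$ is more naturally a consequence of the same ideas (take $E$ one-dimensional and keep track of all $\h$-weights) than an input to this step. With these corrections your outline becomes the correct proof.
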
 

See \cite{LQ1} for the definition of the maps $d_j^*$. In particular, all these maps are computed explicitly for $\g = \mathfrak{sl}_4$ in  \cite{LQ1}, Section 4.3.

\subsection{Symmetries on a singular block of the center}\label{sec-symmetries}

To start, we record the following result to be used later.

\begin{lemma}\label{lemma-some-coh-group}
Let $G$ be a simple Lie group, and $X=G/P$ be the partial flag variety associated to a parabolic group $P$, and denote respectively by $\Omega$ and $T$ the cotangent and tangent bundle of $X$. Then
\begin{itemize}
\item[(i)] $\mH^0(X,T\otimes\Omega )\cong \C$,
\item[(ii)] $\mH^0(X, T\otimes \Omega^2)=0$.
\end{itemize}
\end{lemma}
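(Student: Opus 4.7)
The plan is to convert both cohomology computations into algebraic invariant problems via Lemma~\ref{lem-reduce-to-flag} and Bott's Theorem~\ref{Bott_rel_lie}. As $G$-equivariant bundles on $X=G/P$, we have $T\cong G\times_P (\g/\p)$ and $\Omega\cong G\times_P \n_P$, with $\n_P$ paired with $\g/\p$ in a $P$-equivariant way via the Killing form. Hence $T\otimes \Omega^k\cong G\times_P (\g/\p\otimes \wedge^k \n_P)$, and the multiplicity of each simple $G$-module $L_\nu$ in $\mH^0(X, T\otimes \Omega^k)$ equals $\dim \mathrm{Hom}_\lb(L_\nu, \g/\p\otimes \wedge^k \n_P)$.

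For part (i), the identity endomorphism $\mathrm{id}_T$ yields a nonzero trivial $G$-invariant section in $\mH^0(X, \mathrm{End}(T))$. Its multiplicity equals $\dim \mathrm{End}_\lb(\g/\p)$, which I claim is $1$: the adjoint representation $\g$ is simple as a $\g$-module and is generated over $U(\n)$ by its lowest weight vector $e_{-\theta}$ (where $\theta$ is the highest root of $\g$), so the quotient $\g/\p$ is cyclic as a $U(\n)$-module, generated by $\bar{e}_{-\theta}$ in the one-dimensional weight space at $-\theta$. Any $\lb$-equivariant endomorphism must act as a scalar on $\bar{e}_{-\theta}$, and this extends to all of $\g/\p$ by cyclicity. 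Non-trivial $G$-isotypes are excluded by a weight analysis of the corresponding $\mathrm{Hom}_\lb(L_\nu,\,\cdot\,)$ spaces, facilitated by the BGG complex of Proposition~\ref{rel_lie_BGG}.

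For part (ii), the analogous reduction asks us to show $\mathrm{Hom}_\lb(L_\nu, \g/\p\otimes \wedge^2 \n_P)=0$ for every dominant $\nu$. For the trivial isotype, Killing-form duality identifies $(\g/\p\otimes \wedge^2\n_P)^\lb$ with $\mathrm{Hom}_\lb(\n_P,\wedge^2\n_P)$. Any such $\phi$ must send each root vector $e_{\alpha_j}$ (for $\alpha_j$ a simple root whose root space lies in $\n_P$) into the weight-$\alpha_j$ subspace of $\wedge^2\n_P$; this subspace is zero, since no simple root is a sum of two distinct positive roots. Since every positive root whose root space lies in $\n_P$ descends through positive roots in $\n_P$ to some simple root lying in $\n_P$, the vectors $\{e_{\alpha_j}\}$ generate $\n_P$ as a $U(\n)$-module, forcing $\phi \equiv 0$. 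Non-trivial isotypes are handled by analogous weight-space tracking.

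The main technical obstacle in both parts is the rigorous verification that non-trivial $G$-isotypes contribute zero to $\mH^0$, which requires systematically analyzing the BGG complex of Proposition~\ref{rel_lie_BGG} on the shifted weight-zero subspaces of $L_\nu^*\otimes \g/\p\otimes \wedge^k\n_P$ for $w\cdot \nu$ as $w$ ranges over $W$. The cyclicity and descent properties used above rely only on the simplicity of $G$ and standard facts about its root poset.
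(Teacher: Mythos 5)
Your approach is genuinely different from the paper's. The paper proves this lemma by slope-stability theory: the tangent bundle of $G/P$ carries a K\"{a}hler--Einstein metric, $T$ and $\wedge^k T$ are slope stable (Lemma~\ref{lemma-stablility-sym-wedge}), hence $\mathrm{End}_{\Ox_X}(T)\cong\C$ by simplicity of stable bundles (Lemma~\ref{lemma-stable-bundles-are-simple}), giving (i); while for (ii) the slope of $T\otimes\Omega^2$ is computed to be negative (Corollary~\ref{cor-T-tensor-Omega}), so the bundle has no global sections. You instead reduce via Lemma~\ref{lem-reduce-to-flag} and Theorem~\ref{Bott_rel_lie} to $\lb$-module computations.

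Within your framework the trivial-isotype computations are correct: $\g/\p$ is cyclic over $U(\n)$ generated by the image of $e_{-\theta}$, so $\mathrm{End}_\lb(\g/\p)=\C$, and $\n_P$ is generated over $U(\n)$ by the simple root vectors not in the Levi (your ``descends through positive roots in $\n_P$'' is slightly imprecise --- sometimes one brackets with Levi roots --- but the generation statement itself holds), which kills the trivial isotype in (ii) since $\wedge^2\n_P$ has no weight equal to a simple root.

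However, there is a genuine gap, and you name it yourself: you do not rule out the non-trivial $G$-isotypes, and this is not a bookkeeping detail. The Bott/BGG reduction computes the multiplicity of each $L_\nu$ \emph{separately}, as $\dim\mathrm{Hom}_\lb(L_\nu,\g/\p\otimes\wedge^k\n_P)$, and your cyclicity argument only controls $\nu=0$. Showing that these $\mathrm{Hom}$-spaces vanish for every nonzero dominant $\nu$, uniformly in $G$ and $P$, is a substantial task that the cyclicity/descent arguments do not address (for $\nu\neq 0$ the question is whether $\g/\p\otimes\wedge^k\n_P$ contains an $\n$-highest-weight vector of weight $\nu$ satisfying the Serre-type annihilation relations, which requires a different analysis). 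The stability argument is precisely what lets the paper bound $\mH^0$ all at once without this isotype-by-isotype work, which is why the paper invokes it rather than the BGG machinery for this lemma. As written, your proof establishes a lower bound of $1$ in (i) and an upper bound of $0$ for the $L_0$-multiplicity in (ii), but not the full statement.
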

\begin{proof}
These results are well-known for homogeneous manifolds that admit K\"{a}hler-Einstein metrics. We defer the proof until Section \ref{sec-diagonal-spaces} (see Corollary \ref{cor-T-tensor-Omega}, Lemma \ref{lemma-stable-bundles-are-simple} and Lemma \ref{lemma-stablility-sym-wedge}).
\end{proof}

Since $\Nt_P$ is naturally a holomorphic symplectic variety with an exact symplectic form $\omega\in \mH^0(\Nt_P,\wedge^2 T^*\Nt_P)$ (see, for instance, \cite[Chapter I]{CG} for more details), the tangent bundle and cotangent bundle of the variety are isomorphic via contraction with $\omega$:
\[
\iota_\omega: T\Nt_P \stackrel{\cong}{\lra} T^*\Nt_P.
\]
It follows that $\wedge^2 T\Nt_P$ has a canonical section that is dual to the symplectic form $\omega$, which is the \emph{Poisson bivector field} $\tau$.

\begin{lemma}\label{lemma-uniqueness-Poisson}
The space $\mH^0(\Nt_P,\wedge^2T\Nt_P)^{-2}$ is one-dimensional, and it is spanned by the Poisson bivector field $\tau$ which is dual to the canonical holomorphic symplectic form $\omega\in \mH^0(\Nt_P, \wedge^2 T^*\Nt_P)$.
\end{lemma}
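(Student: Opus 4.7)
The plan is to bound $\dim \mH^0(\Nt_P, \wedge^2 T\Nt_P)^{-2}$ from above by $1$ using the vertical-horizontal filtration of $T\Nt_P$ together with Lemma \ref{lemma-some-coh-group}, and then to match that bound with the non-vanishing bivector $\tau$.

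The first step is to start from the short exact sequence \eqref{eqn-N-tan-bundle-sequence}
\[
0\lra \pr^* T^*X\lra T\Nt_P\lra \pr^* TX\lra 0
\]
on $\Nt_P$, whose subbundle carries $\C^*$-weight $-2$ (fiber-tangent directions rescale with weight $-2$) and whose quotient carries weight $0$. Taking its second exterior power yields a two-step filtration on $\wedge^2 T\Nt_P$ with successive quotients $\pr^*(\wedge^2 T^*X)$, $\pr^*(T^*X\otimes TX)$, and $\pr^*(\wedge^2 TX)$ of $\C^*$-weights $-4$, $-2$, and $0$ respectively.

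The second step is to push this filtration down to $X$. Because $\pr$ is affine, $\pr_*$ is exact, commutes with the $\C^*$-grading, and the projection formula gives $\pr_*\pr^*\F\cong\F\otimes (G\times_P S^\bullet \lu_P)$ with $\lu_P$ placed in weight $+2$. Extracting the weight $-2$ graded piece, only the bottom two subquotients contribute (the top one would require $S^{-1}\lu_P$), and after identifying $G\times_P\lu_P\cong TX$ one arrives at a short exact sequence of $G$-equivariant coherent sheaves on $X$,
\[
0\lra \wedge^2 T^*X\otimes TX\lra (\pr_*\wedge^2 T\Nt_P)^{-2}\lra T^*X\otimes TX\lra 0.
\]

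The final step is to apply $\mH^0(X,-)$. By Lemma \ref{lemma-some-coh-group}, $\mH^0(X,\wedge^2 T^*X\otimes TX)=0$ and $\mH^0(X,T^*X\otimes TX)\cong \C$, so the long exact sequence, combined with $\mH^0(\Nt_P,\wedge^2 T\Nt_P)^{-2}\cong \mH^0(X,(\pr_*\wedge^2 T\Nt_P)^{-2})$ from affineness of $\pr$, produces an injection
\[
\mH^0(\Nt_P,\wedge^2 T\Nt_P)^{-2}\hookrightarrow \C.
\]
The Poisson bivector $\tau$, dual to the non-degenerate symplectic form of weight $+2$, is $G$-invariant, nowhere vanishing, and sits in weight $-2$, so the inclusion is an equality and the space is spanned by $\tau$.

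The main obstacle will be the careful bookkeeping of $\C^*$-weights when identifying the pushforward of each subquotient of the filtration; once that is set up, the uniqueness reduces mechanically to Lemma \ref{lemma-some-coh-group}. An alternative route is to work directly with the explicit $P$-module description of $V_2^{-2}$ in Example \ref{eg-some-G-bundle} and invoke Bott's theorem (Theorem \ref{Bott_rel_lie}) with the BGG complex (Proposition \ref{rel_lie_BGG}), but the filtration approach is cleaner and avoids having to analyze the differential $\Delta_2$ in the quotient description.
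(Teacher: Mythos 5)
Your proof is correct and is essentially the paper's own argument: both establish the short exact sequence $0\to T\otimes\wedge^2\Omega_X \to \pr_*(\wedge^2 T\Nt_P)^{-2}\to T\otimes\Omega_X\to 0$ on $X$, bound the dimension above by $1$ via Lemma \ref{lemma-some-coh-group}, and saturate the bound with the nowhere-vanishing bivector $\tau$. The only difference is presentational: you obtain the sequence by filtering $\wedge^2 T\Nt_P$ via the vertical--horizontal sequence and extracting the weight $-2$ piece of the pushforward, whereas the paper reads it off from the explicit quotient description of $\pr_*(\wedge^2 T\Nt_P)^{-2}$ in Example \ref{eg-some-G-bundle}; these routes produce the same filtration.
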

\begin{proof}
We want to compute the cohomology group
$$\mH^0(\Nt_P,\wedge^2T\Nt_P)^{-2}\cong \mH^0(X, \mathrm{pr}_*(\wedge^2 T\Nt_P)^{-2}).$$
To do so, we use the description of $\mathrm{pr}_*(\wedge^2 T\Nt)^{-2})$ in Example \ref{eg-some-G-bundle}, which tells us that the bundle fits into a short exact sequence of vector bundles
\[
0\lra G\times_P(\lu_P\otimes \n_P\wedge\n_P)\lra \mathrm{pr}_*(\wedge^2 T\Nt_P)^{-2}) \lra G\times_P (\g/\p\otimes\n_P)\lra 0,
\]
Identifying $\g/\p$ with $\lu_P$, and taking global sections of the sequence, we obtain, as part of a long exact sequence
\[
0\lra \mH^0(X, T\otimes \wedge^2 \Omega)\lra \mH^0(\Nt_P,\wedge^2 T\Nt_P)^{-2}\lra \mH^0(X, \Omega\otimes T)\lra \cdots
\]
The previous Lemma \ref{lemma-some-coh-group} thus gives us an upper bound for the dimension of the cohomology group to be at most one. The Lemma follows since the Poisson bivector field $\tau$ is a nowhere zero section that has degree $-2$ (the symplectic form $\omega$ has degree two by our choice of the $\C^*$-aciton).
\end{proof}

Using the Poisson bivector field and the symplectic form, we next construct an $\mathfrak{sl}_2$ action on the total Hochschild homology of $\Nt_P$. The action is reminiscent of the usual $\mathfrak{sl}_2$ action on the Dolbeault cohomology ring of a compact K\"{a}hler manifold, but much simpler. This is because wedging with $\tau$, the analogue of wedging with the first Chern class of an ample line bundle here, has an adjoint that is represented by contracting with the global holomorphic symplectic form $\omega$. 

\begin{theorem}
\label{thm-sl2-action} 
The exterior product with the Poisson bivector field $\tau$ and contraction with the symplectic form $\omega$ defines an $\mathfrak{sl}_2$ action on the total Hochschild cohomology of $\Nt_P$. In particular, for any $j\in \{0,\dots n\}$, the induced map of wedging with the Poisson bivector field $(n-j)$ times 
\[
\tau^{n-j}\wedge(-):\mH^i(\Nt_P, \wedge^j T\Nt_P)^{k} \lra \mH^i(\Nt_P, \wedge^{2n-j} T\Nt_P)^{k+2j-2n}
\]
is an isomorphism of finite-dimensional vector spaces.
\end{theorem}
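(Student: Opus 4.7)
My plan is to realize $\tau \wedge (-)$ and $\iota_\omega$ as the two nilpotent generators of a Lefschetz-type $\mathfrak{sl}_2$-triple on the sheaf $\wedge^\bullet T\Nt_P$ of polyvectors, pass to sheaf cohomology, and then deduce the isomorphism from the structure theory of finite-dimensional $\mathfrak{sl}_2$-representations. Define the $\Ox_{\Nt_P}$-linear operators
\[
e := \tau \wedge (-) \colon \wedge^j T\Nt_P \lra \wedge^{j+2} T\Nt_P, \qquad f := \iota_\omega \colon \wedge^{j+2} T\Nt_P \lra \wedge^j T\Nt_P.
\]
Since $\tau$ has $\C^*$-weight $-2$ by Lemma \ref{lemma-uniqueness-Poisson} and $\omega$ is dual to $\tau$ (hence of weight $+2$), the operator $e$ shifts the bi-grading by $(j, k) \mapsto (j+2, k-2)$ while $f$ shifts it by $(j, k) \mapsto (j-2, k+2)$. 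Both are morphisms of coherent sheaves, so they induce maps on every $\mH^i(\Nt_P, \wedge^\bullet T\Nt_P)$.

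The $\mathfrak{sl}_2$-relations reduce to a fibrewise statement on the holomorphic symplectic vector bundle $(T\Nt_P, \omega)$, so it suffices to verify them on a single symplectic vector space $(V, \omega)$ of dimension $2n$. On the exterior algebra $\wedge^\bullet V^*$ the classical Lefschetz operators --- wedging with $\omega$ and contracting with the dual bivector $\tau$ --- form an $\mathfrak{sl}_2$-triple whose Cartan element acts by $(k - n)$ on $\wedge^k V^*$. Transporting these identities across the isomorphism $\wedge^\bullet V \cong \wedge^\bullet V^*$ induced by $\omega$ identifies them with the relations for $(e, f, h := [e, f])$ on $\wedge^\bullet V$, with $h$ acting by $(j - n)$ on $\wedge^j V$. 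Because $e$ and $f$ are $\Ox$-linear, these stalk-wise identities propagate to the sheaves $\wedge^\bullet T\Nt_P$ and hence to their sheaf cohomology, producing the claimed $\mathfrak{sl}_2$-action on the total Hochschild cohomology.

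To extract the Lefschetz isomorphism, note that $s := j + k$ is preserved by both $e$ and $f$, so for each fixed pair $(i, s)$ the sum
\[
M_{i,s} \;:=\; \bigoplus_{j = 0}^{2n} \mH^i(\Nt_P, \wedge^j T\Nt_P)^{s - j}
\]
is an $\mathfrak{sl}_2$-submodule of the total Hochschild cohomology. Each summand is finite-dimensional by the $G \times \C^*$-equivariant description of $\mathrm{pr}_*(\wedge^j T\Nt_P)$ from Corollary \ref{cor-wedge-P-structure}, and $j$ ranges over only $2n + 1$ values, so $M_{i,s}$ is a finite-dimensional $\mathfrak{sl}_2$-representation. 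By complete reducibility, $e^{n-j}$ restricts to an isomorphism from the $h$-weight $(j - n)$ space onto the $h$-weight $(n - j)$ space; translating back into bi-degrees with $k = s - j$ gives precisely the asserted map $\tau^{n-j} \wedge (-) \colon \mH^i(\Nt_P, \wedge^j T\Nt_P)^{k} \xrightarrow{\cong} \mH^i(\Nt_P, \wedge^{2n-j} T\Nt_P)^{k + 2j - 2n}$.

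The main care needed is in the second step: one must pin down sign and normalization conventions so that the pointwise Lefschetz $\mathfrak{sl}_2$ on $\wedge^\bullet V^*$ transports under the isomorphism induced by $\omega$ to exactly the triple $(e, f, h)$ with $h$ acting by $j - n$ on $\wedge^j T\Nt_P$. Once this bookkeeping is settled, the remaining work is routine: $\Ox$-linearity guarantees descent to sheaves and then to cohomology, and the finite-dimensional $\mathfrak{sl}_2$-theoretic conclusion needs no further ingredient.
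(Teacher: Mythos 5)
Your proof is correct and follows essentially the same route as the paper: realize $\tau\wedge(-)$ and $\iota_\omega$ as sheaf endomorphisms of $\wedge^\bullet T\Nt_P$, verify the $\mathfrak{sl}_2$-relations fiberwise on a symplectic vector space, and descend to cohomology. You have simply made explicit two points the paper treats tersely --- the transport of the classical Lefschetz triple from $\wedge^\bullet V^*$ to $\wedge^\bullet V$ via the musical isomorphism induced by $\omega$, and the observation that fixing $(i, j+k)$ carves out a finite-dimensional $\mathfrak{sl}_2$-module on which complete reducibility yields the hard Lefschetz isomorphism --- but these are elaborations of the same argument rather than a different one.
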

\begin{proof} Consider the holomorphic bundle maps on the total exterior-algebra bundle $\wedge^\star T\Nt_P$:
\[
\iota_w:\wedge^\star T\Nt_P \lra \wedge^{\star -2 }T\Nt_P, \quad \eta \mapsto \iota_\omega \eta,
\]  
\[
\tau \wedge: \wedge^\star T\Nt_P \lra \wedge^{\star+2} T\Nt_P, \quad \eta \mapsto \tau\wedge \eta,
\]
where $\eta$ stands for a homogeneous local section of $\wedge^\star T\Nt_P$. 

A fiberwise computation shows that the commutator of these two operations satisfy
\[
[\iota_\omega, \tau \wedge] = n-\mathrm{deg},
\]
where $\mathrm{deg}(\eta)=k$ if $\eta$ is a local section of $\wedge^kT\Nt_P$. After suitable normalization, this is no other than the usual $\mathfrak{sl}_2$-action on the algebra-bundle $\wedge^\star T\Nt_P$. The result follows since the $\mathfrak{sl}_2$ action then descends to the cohomology groups.
\end{proof}

By the analogy with the usual Dolbeault cohomology of a compact K\"{a}hler manifold, we make the following definition for the holomorphic symplectic variety $\Nt_P$.

\begin{definition}\label{def-hodge-diamond}
The \emph{formal Hodge diamond} for the variety $\Nt_P$ is the following table of degree-zero Hochschild cohomology groups:
\begin{gather}\label{table-hodge}
\begin{array}{|c|c|c|c|c|} 
\hline
 \mH^0(\wedge^0T\Nt_P)^0    &                         &                          &                         \\  \hline 
 \mH^1(\wedge^1T\Nt_P)^{-2}    &  \mH^0(\wedge^2T\Nt_P)^{-2}   &                          &                        \\  \hline 
 \vdots                  &  \vdots                 &  \ddots                  &                       \\  \hline
 \mH^n(\wedge^n T\Nt_P)^{-2n}    &  \mH^{n-1}(\wedge^{n+1}T\Nt_P)^{-2n}   &  \hspace{0.3in}\dots\hspace{0.3in}         & \mH^{0}(\wedge^{2n} T\Nt_P)^{-2n}   \\  \hline
\end{array}  \ .
\end{gather} 
The empty entries indicate that the corresponding terms vanish for degree reasons. We will denote by $\zl_P$ the direct sum 
\[
\zl_P:=\bigoplus_{i,j\in \N}\mH^i(\Nt_P,\wedge^{j}T\Nt_P)^{-i-j},
\]
which is a commutative algebra bigraded by $(i,j)\in \N^2$. Note that $\zl_P = \zl^\lambda$ in Theorem \ref{BeLa_sing} for $P = P_\lambda$.  
\end{definition}

Theorem \ref{thm-sl2-action} indicates that the formal Hodge diamond has a $\Z_2$-symmetry with respect to reflecting about the anti-diagonal. In particular, the left-most column in Table \ref{table-hodge} and the bottom row are isomorphic to each other, the two parts intersecting in the space $\mH^n(\wedge^n T\Nt_P)^{-2n}$.

The $\Z_2$-symmetry can also be realized purely in terms of the isomorphism of the bundle $T\Nt_P\cong T^* \Nt_P$. This is discussed in the prequel to the current work \cite{LQ1}, and we refer the reader there for more details.

Via the $\mathfrak{sl}_2$ action, one can find a large subalgebra of $\zl_P$, which, as we will see in Section \ref{sec-proj-space}, coincides with $\zl_P$ when $X\cong \mathbb{P}^n$. 

Observe that the first column in Table \ref{table-hodge} constitutes a copy of the (parabolic) coinvariant subalgebra 
\begin{equation}\label{eqn-coinvariant-subalg}
\mathrm{C}_P:=\bigoplus_{k=0}^n\mathrm{C}_P^{k,k} =\bigoplus_{k=0}^{n}\mH^k(\Nt_P,\wedge^k\Nt_P)^{-2k}.
\end{equation}
 inside $\zl_P$. This is because, for any $k\in \{0,\dots, n\}$,
\[
\mathrm{pr}_*(\wedge^kT\Nt_P)^{-2k}\cong G\times_P (\wedge^k \n_P) \cong \Omega_X^k,
\]
and thus, by the classical result of Borel on the (Dolbeault) cohomology of compact homogeneous K\"{a}hler manifolds, we have
\[
\mH^i(\Nt_P, \wedge^k\Nt_P)^{-2k}\cong \pr^*(\mH^i(X,\Omega_X^k))\cong 
\left\{
\begin{array}{cc}
\mH^{2k}(X,\C) & i=k, \\
0  & i\neq k.
\end{array}
\right.
\]
The $\Z_2$-symmetry of Table \ref{table-hodge} shows that the bottom row is isomorphic to $C_P$ up to an obvious bigrading transformation.

We define  $\tau\mathrm{C}_P$  to be the bigraded commutative algebra generated by the copy of $\mathrm{C}_P$ that appears in the first column of Table \ref{table-hodge} and $\tau$ with relations:
\begin{equation}\label{eqn-sl2-subalgebra}
\tau\mathrm{C}_P:= \frac{\mathrm{C}_P [\tau]}{\{x\tau^l|x\in \mathrm{C}_P^{r,r},~ r,l\in \N,~ l+r>n\}}.
\end{equation}
The following result is then a direct consequence of Theorem \ref{thm-sl2-action}.

\begin{corollary}\label{cor-sl2-subalgebra} 
\begin{enumerate}
\item[(i)]The algebra $\tau\mathrm{C}_P$ is a natural $\mathfrak{sl}_2$-invariant subalgebra inside $\zl_P$. 
\item[(ii)]It contains a copy of the parabolic coinvariant subalgebra $\mathrm{C}_P$ and a copy of $\mathrm{C}_P$ as its socle. The two copies of $\mathrm{C}_P$ intersect in the one-dimensional subspace $\mH^n(\Nt_P, \wedge^n T \Nt_P)^{-2n}$. \hfill$\square$
\end{enumerate}
\end{corollary}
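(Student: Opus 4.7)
The plan is to realize $\mathrm{C}_P$ as a lowest-weight subspace for the $\mathfrak{sl}_2$-action of Theorem \ref{thm-sl2-action} and then to read off the structure of $\tau\mathrm{C}_P$ from standard $\mathfrak{sl}_2$-representation theory. Write $e = \tau\wedge(-)$, $f = \iota_\omega$, and $h = \mathrm{deg} - n$ as in the proof of that theorem. First I would recall the identification $\pr_*(\wedge^r T\Nt_P)^{-2r} \cong G\times_P \wedge^r\n_P \cong \Omega^r_X$, so that $\mathrm{C}_P^{r,r} \cong \mH^r(X,\Omega^r_X)$; the first column of Table \ref{table-hodge} therefore assembles, via Borel's theorem and the cup product, into the parabolic coinvariant algebra of $(W, W_P)$. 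Since $\tau$ is the class identified by Lemma \ref{lemma-uniqueness-Poisson}, the subspace generated by $\mathrm{C}_P$ and $\tau$ inside $\zl_P$ under the cup product is a well-defined commutative subalgebra.

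The main step is the verification that each $\mathrm{C}_P^{r,r}$ lies in the kernel of $f = \iota_\omega$, so that it is a lowest-weight space of $h$-weight $r - n$. This requires the vanishing of $\mH^r(\Nt_P, \wedge^{r-2} T\Nt_P)^{-2r+2}$, and by Corollary \ref{cor-wedge-P-structure} it reduces to a $\C^*$-weight count on $\wedge^{r-2} V_1$: every generator of $V_1$ carries $\C^*$-weight $\geq -2$ (the minimum being attained by the $\n_P$-summand coming from the fibers of $\pr$), so the minimum weight on $\wedge^{r-2} V_1$ is $-2(r-2) = -2r + 4 > -2r + 2$, forcing the target to vanish. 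This weight count is the only non-formal ingredient in the argument, and I expect it to be the main obstacle to making the proof careful.

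Granted the lowest-weight property, standard $\mathfrak{sl}_2$-representation theory shows that each $x \in \mathrm{C}_P^{r,r}$ generates the irreducible lowest-weight representation of weight $r - n$, spanned by $\{e^l x\}_{l=0}^{n-r}$ with $e^{n-r+1} x = 0$; the isomorphism in Theorem \ref{thm-sl2-action} further guarantees that $\tau^l \wedge x = e^l x$ is non-zero for every $l \leq n - r$. This realizes precisely the defining relations of $\tau \mathrm{C}_P$ inside $\zl_P$, while the commutator $[f, e^l] = -l\, e^{l-1}(h + l - 1)$ in $U(\mathfrak{sl}_2)$ shows that $f$ preserves $\tau \mathrm{C}_P$, proving (i). For (ii), the socle copy of $\mathrm{C}_P$ is realized as $\bigoplus_{r=0}^{n} \tau^{n-r} \mathrm{C}_P^{r,r}$, sitting in the bottom row of Table \ref{table-hodge} and isomorphic to $\mathrm{C}_P$ as a bigraded vector space by Theorem \ref{thm-sl2-action}. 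Comparing the bidegrees $(r, r, -2r)$ of the first-column copy against $(r, 2n - r, -2n)$ of the socle copy shows that they intersect only at $r = n$, in the one-dimensional subspace $\mathrm{C}_P^{n,n} = \mH^n(\Nt_P, \wedge^n T\Nt_P)^{-2n} \cong \mH^n(X, \Omega^n_X) \cong \C$.
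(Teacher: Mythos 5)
Your proof is correct, and it supplies exactly the details the paper elides by declaring the corollary ``a direct consequence of Theorem \ref{thm-sl2-action}.'' The key step you were worried about --- the $\C^*$-weight count showing $\mathrm{C}_P^{r,r}\subset\ker(\iota_\omega)$ --- is fine: $V_1$ is free over $S^\bullet(\lu_P)$ with basis in $\C^*$-degrees $0$ (the $\g/\p$-part) and $-2$ (the $\n_P$-part), and $S^\bullet(\lu_P)$ sits in non-negative degree, so $\wedge^{r-2}_{S^\bullet(\lu_P)}V_1$ indeed vanishes in degree $-2r+2 < -2(r-2)$; hence $\mathrm{pr}_*(\wedge^{r-2}T\Nt_P)^{-2r+2}=0$ as a sheaf and the claim follows. (For $r=0,1$ the target exterior power is itself zero.) The same count applied to $\wedge^{r+2l}V_1$ in degree $-2r-2l$ directly forces $\tau^l\wedge x=0$ once $l+r>n$, so one could also bypass the lowest-weight step entirely: injectivity of $\tau^l\wedge(-)$ for $l\le n-r$ comes from factoring the isomorphism $\tau^{n-r}\wedge(-)$ of Theorem \ref{thm-sl2-action}, while vanishing for $l>n-r$ comes from the degree bound. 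Your route via lowest-weight vectors and the commutator $[f,e^l]=-l\,e^{l-1}(h+l-1)$ is equivalent and has the merit of exhibiting the $\mathfrak{sl}_2$-invariance of $\tau\mathrm{C}_P$ explicitly, which is needed for part (i). The identification of the socle and the intersection in bidegree $(n,n)$ in part (ii) is also correct.
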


\section{Examples and further questions}\label{sec-examples}
\subsection{Projective spaces}\label{sec-proj-space}
In this section, we compute the zeroth Hochschild cohomology of the cotangent bundle of the complex projective space $\PS^n$. Our goal is to show that the ``formal Hodge diamond'' for $\Nt_P:=T^* \PS^n$ looks like the following lower triangular matrix with entries all $1$-dimensional:

\begin{gather}  \label{table-hodge-proj}
\begin{array}{|c||c|c|c|c|} \hline 
         \scriptstyle{ j+i=0 } & 1   &                         &                         &                          \\  \hline 
              \scriptstyle{ j+i=2 } & 1   &   1                 &                         &                          \\  \hline 
               \scriptstyle{ \vdots }   & \vdots   &  \vdots  &    \ddots  &                          \\  \hline
             \scriptstyle{ j+i=2n }    &  1   &  1  &  \ldots  &   1   \\  \hline \hline 
 \scriptstyle{h^{i,j}} &  \scriptstyle{ j-i=0 }       &     \scriptstyle{ j-i=2 }       &    \ldots   &  \scriptstyle{ j-i=2n }  
 \\ \hline 
\end{array}  
\end{gather}

Here we have used the notation
\begin{equation}\label{eqn-hodge-numbers}
 h^{i,j} :=  h^{i}(\wedge^{j}T\Nt_P):=\mathrm{dim}_\C (\mH^{i}(\Nt_P, \wedge^j T\Nt_P))^{-i-j}.
\end{equation}
In particular, the inclusion of the subalgebra $\tau\mathrm{C}_P\subset \zl_P$ (Corollary \ref{cor-sl2-subalgebra}) is an equality for $\Nt_P$.

The result will be shown in several steps. First we observe that, by Corollary \ref{cor-sl2-subalgebra}, the first column and the bottom row constitute two copies of the cohomology space of $\PS^n$, so that, for any $0\leq k\leq n$,
\begin{equation}\label{eqn-first-column}
\mH^k(\Nt_P, \wedge^kT\Nt_P)^{-2k}\cong \mH^{k}(\Nt_P,\wedge^{2n-k}T\Nt_P)^{-2n}\cong \C.
\end{equation}

Next we recall the classical Euler's sequence for the cotangent bundle (sheaf) on $\PS^n$:
\begin{equation}\label{eqn-Euler-sequence}
0\lra \Omega_{\PS^n}\lra \mathcal{O}_{\PS^n}(-1)^{n+1}\lra \mathcal{O}_{\PS^n}\lra 0.
\end{equation}
See, for instance, \cite[Section 5.7]{CG} for more details. When no confusion can be caused, we will drop the subscripts of the sheaves.

\begin{lemma}\label{lemma-vanishing-twisted-cotang-bundle}
Let $n\in \N$ be a natural number. Then, for any $0\leq r \leq n-1$, the following cohomology groups on $\PS^n$ vanish:
\[
\mH^{\star} (\PS^n, \Omega_{\PS^n}^{\otimes r} (-1))=0.
\]
\end{lemma}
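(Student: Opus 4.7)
The plan is to prove the stronger statement by induction on $r$: for all integers $r\geq 0$ and all $s$ with $1\leq s\leq n-r$, one has
\[
 \mH^{\star}(\PS^n, \Omega_{\PS^n}^{\otimes r}(-s))=0.
\]
Taking $s=1$ will recover the lemma for $0\leq r\leq n-1$.

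The base case $r=0$ is the classical cohomology of line bundles on projective space: $\mH^{0}(\PS^n,\mathcal{O}(-s))=0$ because a negative line bundle has no global sections, $\mH^{i}(\PS^n,\mathcal{O}(-s))=0$ for $0<i<n$ by the standard computation, and $\mH^{n}(\PS^n,\mathcal{O}(-s))=0$ as long as $-s>-n-1$, i.e., $s\leq n$. So the statement holds for all $1\leq s\leq n$ when $r=0$.

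For the inductive step, I would twist the Euler sequence \eqref{eqn-Euler-sequence} by the locally free sheaf $\Omega_{\PS^n}^{\otimes (r-1)}(-s)$. Since all terms in \eqref{eqn-Euler-sequence} are locally free, this preserves exactness and yields
\[
0\lra \Omega_{\PS^n}^{\otimes r}(-s)\lra \Omega_{\PS^n}^{\otimes (r-1)}(-s-1)^{\oplus(n+1)}\lra \Omega_{\PS^n}^{\otimes (r-1)}(-s)\lra 0.
\]
The associated long exact sequence of cohomology then reduces the vanishing of $\mH^{\star}(\Omega_{\PS^n}^{\otimes r}(-s))$ to the vanishing of the cohomology of the two outer $\Omega_{\PS^n}^{\otimes (r-1)}$ terms with twists $-s$ and $-s-1$. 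For $1\leq s\leq n-r$ we have $1\leq s\leq n-(r-1)$ and $2\leq s+1\leq n-(r-1)$, so both vanishings are supplied by the inductive hypothesis at level $r-1$.

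I do not expect any genuine obstacle here; the only point that requires care is to choose the strengthened inductive statement with the extra parameter $s$, since the naive statement with $s=1$ alone does not close up under the inductive step (tensoring the Euler sequence introduces an additional twist by $\mathcal{O}(-1)$). Once the range $1\leq s\leq n-r$ is identified so that both the inductive input and the induced output stay within the allowed window, the argument is an immediate induction on $r$ using the long exact sequence in cohomology.
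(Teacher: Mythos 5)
Your proof is correct and follows essentially the same line of reasoning as the paper: strengthen the statement to allow a range of twists, then induct using the Euler sequence tensored with the appropriate twisted tensor power of $\Omega_{\PS^n}$, and apply the long exact sequence in cohomology. The paper phrases the induction as a countdown on a parameter $a$ (running from $r$ down to $0$, with $\Omega^{\otimes(r-a)}(-s-1)$) rather than your ascending induction on the tensor power $r$ with twist range $1\leq s\leq n-r$, but this is only a cosmetic re-indexing — the inductive step, the role of the Euler sequence, and the bookkeeping on the allowed twists are the same.
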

\begin{proof}
Then case when $r=0$ is well-known. To obtain the result, we will generalize and use induction on the number $a$ below to show that
\[
\mH^{\star} (\PS^n, \Omega_{\PS^n}^{\otimes (r-a)} (-s-1))=0
\]
for all $a$ and $s$ satisfying $0\leq s \leq a \leq r$.

When $a=r$, the result is clear since $-n\leq -r-1 \leq -s-1\leq -1$, and the cohomology groups
\[
\mH^{\star}(\PS^n,\mathcal{O}_{\PS^n}(-m)) 
\]
vanish for all $m\in \{1,\dots, n\}$.

Assume the induction hypothesis is true for $a+1$. Tensoring the Euler sequence \eqref{eqn-Euler-sequence} with $\Omega^{\otimes (r-a-1)}(-s-1)$, we get a short exact sequence of vector bundles
\[
0\lra \Omega^{\otimes (r-a)}(-s-1)\lra \left(\Omega^{\otimes (r-a-1)}(-s-2)\right)^{\oplus(n+1)}\lra \Omega^{\otimes (r-a-1)}(-s-1)\lra 0.
\]
Taking the associated cohomology long exact sequence, we see that, by induction hypothesis, the cohomology groups for the middle and right-hand terms vanish. Therefore so does the cohomology for the left-hand term, as desired.
\end{proof}

\begin{lemma}\label{lemma-tensor-Omega-cohomology}
On the projective space $\PS^n$, for any $k\in \{0,\dots, n\}$, the cohomology group
\[
\mH^{r}(\PS^n, \Omega_{\PS^n}^{\otimes k}) = 
\left\{
\begin{array}{lc}
\C & r = k,\\
0 & r \neq k.
\end{array}
\right.
\]
\end{lemma}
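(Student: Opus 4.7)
The plan is to prove this by induction on $k$, using the Euler sequence \eqref{eqn-Euler-sequence} together with the vanishing result in Lemma \ref{lemma-vanishing-twisted-cotang-bundle}. The base case $k=0$ is immediate: $\mathrm{H}^r(\PS^n, \mathcal{O}_{\PS^n}) = \C$ if $r=0$ and vanishes otherwise.

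For the inductive step, I would tensor the Euler sequence with $\Omega_{\PS^n}^{\otimes (k-1)}$ to obtain the short exact sequence of vector bundles
\[
0 \lra \Omega_{\PS^n}^{\otimes k} \lra \left(\Omega_{\PS^n}^{\otimes (k-1)}(-1)\right)^{\oplus(n+1)} \lra \Omega_{\PS^n}^{\otimes(k-1)} \lra 0.
\]
Taking the associated long exact sequence in cohomology, the middle terms
\[
\mathrm{H}^{\star}\left(\PS^n,\Omega_{\PS^n}^{\otimes(k-1)}(-1)\right)
\]
vanish by Lemma \ref{lemma-vanishing-twisted-cotang-bundle}, provided $k-1 \leq n-1$, i.e., $k \leq n$, which is exactly our hypothesis. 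The long exact sequence then collapses to give boundary isomorphisms
\[
\mathrm{H}^{r}(\PS^n, \Omega_{\PS^n}^{\otimes k}) \cong \mathrm{H}^{r-1}(\PS^n, \Omega_{\PS^n}^{\otimes(k-1)})
\]
for all $r \geq 0$ (with the convention that $\mathrm{H}^{-1}$ vanishes).

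Applying this shift isomorphism $k$ times reduces the computation to $\mathrm{H}^{r-k}(\PS^n, \mathcal{O}_{\PS^n})$, which is $\C$ precisely when $r=k$ and zero otherwise. I do not anticipate any serious obstacle here; the main point is simply to verify that the hypotheses of Lemma \ref{lemma-vanishing-twisted-cotang-bundle} are satisfied at each step of the induction, and the range $0\leq k \leq n$ in the statement is exactly what makes the applicability of the vanishing lemma uniform throughout. One sanity check worth noting is that for $k=n$ the isomorphism gives $\mathrm{H}^n(\Omega^{\otimes n}) \cong \C$, which is consistent with the presence of a canonical section (the top wedge summand), and is also compatible with the Borel computation for the wedge powers recalled in \eqref{eqn-coinvariant-subalg}.
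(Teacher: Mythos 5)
Your proof is correct and takes essentially the same approach as the paper's: induct on $k$ using the Euler sequence, and invoke Lemma \ref{lemma-vanishing-twisted-cotang-bundle} to kill the middle term so the connecting maps give the degree-shift isomorphism. You index the induction slightly differently (stepping from $k-1$ to $k$ with base case $k=0$ and iterating the shift explicitly, whereas the paper steps from $k$ to $k+1$ with base cases $k=0,1$), but this is a cosmetic difference, not a different route.
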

\begin{proof}
We prove this by induction on $k$. When $k=0,1$, the result is clear for all $n\geq 0$.

Assume that $n\geq 1$ and the induction hypothesis is true for $k$. Tensoring the Euler sequence \eqref{eqn-Euler-sequence} with $\Omega^{\otimes(k-1)}$, we obtain
\[
0\lra \Omega^{\otimes (k+1)} \lra \left(\Omega^{\otimes k}(-1)\right)^{\oplus (n+1)}\lra \Omega^{\otimes k}\lra 0.
\]
By the previous Lemma \ref{lemma-vanishing-twisted-cotang-bundle}, the middle term has vanishing cohomology since $k+1\leq n$. Hence the connecting maps provide isomorphisms
\[
\mH^r(\PS^n,\Omega^{\otimes k})\stackrel{\cong}{\lra} \mH^{r+1}(\PS^n,\Omega^{\otimes (k+1)}).
\]
The result follows.
\end{proof}

Now we will establish the dimension Table \ref{table-hodge-proj}. The $r$th column of the table consists of cohomology groups
\[
\mH^{0}(\Nt_P,\wedge^{2r}T\Nt_P)^{-2r},\ \dots, \mH^{k}(\Nt_P, \wedge^{2r+k}T\Nt_P)^{-2r-2k}, \ \dots,
\mH^{n-r}(\Nt_P, \wedge^{n+r}T\Nt_P)^{-2n},
\]
where $k\in \{0,\dots, n-r\}$. 

Fix such a $k$. By the equivariant structure of $\pr_*(\wedge^{2r+k}T\Nt_P)^{-2r-2k}$ (Corollary \ref{cor-wedge-P-structure}), we know that it has a natural filtration whose subquotients are of the form
\begin{equation}\label{eqn-subquotients}
\mathcal{F}_l:=G\times_P(S^{r-l}\lu_P \otimes \wedge^{l}(\g/\p)\otimes \wedge^{2r+k-l}\n_P)\cong 
S^{r-l}T\otimes \wedge^lT \otimes \Omega^{2r+k-l},
\end{equation}
where $l$ ranges between $\mathrm{max}(0,2r+k-n)$ and $r$. Let $\mathcal{O}(K):=\wedge^n\Omega$ be the canonical bundle of $\PS^n$.
Using the isomorphism of sheaves
\[
\Omega^{2r+k-l} \cong \mathcal{H}om(\Omega^{n+l-2r-k}, \mathcal{O}(K))
\cong \wedge^{n+l-2r-k} T(K),
\]
the subquotient \eqref{eqn-subquotients} is then isomorphic to
\[
\mathcal{F}_l\cong 
S^{r-l}T\otimes \wedge^lT \otimes \wedge^{n+l-2r-k} T(K).
\]
Taking the $k$th cohomology of the sheaf, we obtain, via Serre duality, that the cohomology of the subquotients are isomorphic to
\[
\mH^k(\PS^n,\mathcal{F}_l)\cong
\mH^{n-k}(\PS^n,
S^{r-l}\Omega\otimes \Omega^l\otimes \Omega^{n+l-2r-k}
)^*.
\]
Notice that the bundle $S^{r-l}\Omega\otimes \Omega^l\otimes \Omega^{n+l-2r-k}\subset \Omega^{\otimes (n-k+l-r)}$ is naturally a direct summand. Therefore, by Lemma \ref{lemma-tensor-Omega-cohomology}, the group
\[
\mH^k(\PS^n,\mathcal{F}_l)\cong \mH^{n-k}(\PS^n,
S^{r-l}\Omega\otimes \Omega^l\otimes \Omega^{n+l-2r-k}
)^*=0
\]
if $l\neq r$. When $l=r$, we have
\[
\mH^k(\PS^n,\mathcal{F}_r)\cong 
\mH^{n-k}(\PS^n,
\Omega^r\otimes \Omega^{n-r-k}).
\]
As the inclusions of bundles $\Omega^{n-k}\subset \Omega^r\otimes \Omega^{n-r-k} \subset \Omega^{\otimes(n-k)} $ all split, we get,
by applying Lemma \ref{lemma-tensor-Omega-cohomology} again, that
\[
\C\cong \mH^{n-k}(\PS^n,\Omega^{n-k})\subset \mH^{n-k}(\PS^n,\Omega^r\otimes \Omega^{n-r-k}) \subset \mH^{n-k}(\PS^n,\Omega^{\otimes(n-k)})\cong \C.
\]
Hence we have shown
\begin{equation}\label{eqn-subquotient-cohomology}
\mH^{k}(\PS^n, \mathcal{F}_l)\cong 
\left\{
\begin{array}{lc}
\C & l=r,\\
0  & l\neq r.
\end{array}
\right.
\end{equation}
Since $ \mH^k(\PS^n, \pr_*(\wedge^{2r+k}T\Nt_P)^{-2r-2k})$ is bounded by the $k$th cohomology of its subquotient sheaves in any filtration (an easy induction exercise), we conclude that
\begin{equation}
\mathrm{dim}(\mH^{k}(\Nt_P, \wedge^{2r+k}T\Nt_P)^{-2r-2k})=\mathrm{dim}( \mH^k(\PS^n, \pr_*(\wedge^{2r+k}T\Nt_P)^{-2r-2k}))\leq 1.
\end{equation}
To show that the equality actually holds, we resort to the $\mathfrak{sl}_2$ action on the formal Hodge diamond (Theorem \ref{thm-sl2-action}), which tells us that
\[
\tau^r\wedge(-): \mH^k(\Nt_P,\wedge^{k}T\Nt_P)^{-2k}\lra \mH^{k}(\Nt_P, \wedge^{2r+k}T\Nt_P)^{-2r-2k}
\]
is an injection from the $k$th entry from top on the first column, the first space being isomorphic to $\C$ thanks to \eqref{eqn-first-column}. Thus we have established the following.

\begin{theorem}\label{thm-dim-1}
Let $n$ be a natural number and $\Nt_P:=T^*\PS^n$. Then the graded Hochschild homology group, for any $r\in \{0,\dots, n\}$ and $k=0, \dots, n-r$, we have
\[
\mH^k(\Nt_P, \wedge^{2r+k}T\Nt_P)^{-2r-2k}\cong L_0
\]
as $\mathfrak{sl}_n$-representations. \hfill$\square$
\end{theorem}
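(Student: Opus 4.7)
The plan is to reduce the computation to sheaf cohomology on $\PS^n$ via the affine projection $\mathrm{pr}:\Nt_P \to \PS^n$ and then exploit the equivariant bundle description of the pushforward supplied by Corollary~\ref{cor-wedge-P-structure}. Concretely, the graded piece $\mathrm{pr}_*(\wedge^{2r+k}T\Nt_P)^{-2r-2k}$ carries a natural $P$-equivariant filtration whose subquotients, after identifying $\g/\p$ with $\lu_P$ and $\n_P$ with its own tangent space, take the form
\[
\mathcal{F}_l \cong S^{r-l}T \otimes \wedge^l T \otimes \Omega^{2r+k-l}
\]
on $\PS^n$ with $l$ ranging over a finite interval. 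The theorem will follow from (a) a sharp computation of $\mH^k(\PS^n, \mathcal{F}_l)$, together with (b) a filtration bound yielding $\dim \leq 1$, and (c) an $\mathfrak{sl}_2$-action argument that promotes this bound to an equality.

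For step (a), I would first establish two auxiliary facts on $\PS^n$ using the Euler sequence $0 \to \Omega \to \Ox(-1)^{\oplus(n+1)} \to \Ox \to 0$ and a nested induction: namely, that $\mH^\star(\PS^n, \Omega^{\otimes r}(-1)) = 0$ for $0 \leq r \leq n-1$, and that $\mH^s(\PS^n, \Omega^{\otimes k}) \cong \C$ if $s = k$ and vanishes otherwise, for $0 \leq k \leq n$. The first fact is proved by tensoring the Euler sequence with $\Omega^{\otimes(r-a-1)}(-s-1)$ and inducting downward on $a$; the second by tensoring with $\Omega^{\otimes(k-1)}$, invoking the first, and reading off isomorphisms from the long exact sequence. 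Serre duality then rewrites
\[
\mH^k(\PS^n, \mathcal{F}_l) \cong \mH^{n-k}(\PS^n, S^{r-l}\Omega \otimes \Omega^l \otimes \Omega^{n+l-2r-k})^*,
\]
and since the sheaf on the right is a direct summand of $\Omega^{\otimes(n-k+l-r)}$, the second auxiliary fact forces the cohomology to vanish unless $l=r$. In the surviving case, the inclusions $\Omega^{n-k} \subset \Omega^r \otimes \Omega^{n-r-k} \subset \Omega^{\otimes(n-k)}$ all split, so applying the second auxiliary lemma at both endpoints sandwiches $\mH^k(\PS^n, \mathcal{F}_r) \cong \C$.

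For steps (b) and (c), a routine induction on the filtration shows that $\dim \mH^k(\Nt_P, \wedge^{2r+k}T\Nt_P)^{-2r-2k}$ is bounded by the sum of the dimensions of $\mH^k(\PS^n, \mathcal{F}_l)$, which by the previous step is at most one. To show equality, I would invoke Theorem~\ref{thm-sl2-action}: wedging $r$ times with the Poisson bivector $\tau$ furnishes an injection from $\mH^k(\Nt_P, \wedge^k T\Nt_P)^{-2k}$ into the target, and the source is one-dimensional by the coinvariant-column identification \eqref{eqn-first-column}. Since each cohomology group is one-dimensional and $G$-equivariant, it is automatically the trivial $\mathfrak{sl}_{n+1}$-representation $L_0$.

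The main obstacle I anticipate is the interleaved induction on the Euler sequence together with the careful grading bookkeeping through the filtration on $V_\star$: one must confirm that exactly one subquotient (at $l=r$) contributes, that the Serre dual lands in a tensor power for which the cohomology is known, and that the $\mathfrak{sl}_2$-injection lives in matching $\C^*$-degrees. Everything else, including the passage from the filtration bound to the final dimension, is essentially formal once the two auxiliary Euler-sequence lemmas are in place.
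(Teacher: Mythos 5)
Your proposal is correct and follows essentially the same route as the paper: the Euler-sequence inductions proving $\mH^\star(\PS^n,\Omega^{\otimes r}(-1))=0$ and $\mH^s(\PS^n,\Omega^{\otimes k})\cong\C\,\delta_{s,k}$, the Serre-duality reduction of each filtration subquotient $\mathcal{F}_l$ to a summand of $\Omega^{\otimes(n-k+l-r)}$, the filtration bound $\dim\leq 1$, and the $\mathfrak{sl}_2$-injection via $\tau^r$ from the coinvariant column are all the same steps in the same order. (Incidentally, you correctly identify the acting group as $\mathfrak{sl}_{n+1}$; the $\mathfrak{sl}_n$ appearing in the statement is a typo in the paper.)
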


\begin{corollary}\label{cor-algebra-structure}
The degree-zero Hochschild cohomology of $T^*\PS^n$ is isomorphic, as a bigraded commutative algebra, to 
\[
\frac{\C[x,\tau]}{\{x^a\tau^b|a,b\in \N,\ a+b>n\}},
\]
where $\mathrm{deg}(x)=(1,1)$ and $\mathrm{deg}(\tau)=(0,2)$.
\end{corollary}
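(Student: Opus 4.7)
The plan is to combine the dimensional information provided by Theorem \ref{thm-dim-1} with the abstract structure of the $\mathfrak{sl}_2$-invariant subalgebra $\tau \mathrm{C}_P \subseteq \zl_P$ constructed in Corollary \ref{cor-sl2-subalgebra}, applied to the special case $X = G/P \cong \PS^n$. Once the candidate presented algebra is identified as $\tau\mathrm{C}_P$, the result will follow from a dimension match.

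First, I would identify the parabolic coinvariant algebra $\mathrm{C}_P$ explicitly in this case. By the Borel description recalled in \eqref{eqn-coinvariant-subalg}, one has $\mathrm{C}_P^{k,k} \cong \mH^{2k}(\PS^n, \C)$, and therefore
\[
\mathrm{C}_P \;\cong\; \C[x]/(x^{n+1}), \qquad \deg(x) = (1,1).
\]
Substituting this into the definition \eqref{eqn-sl2-subalgebra} of $\tau\mathrm{C}_P$, and noting that the relation $x^{n+1}$ is itself of the form $x^a \tau^b$ with $a+b = n+1 > n$, one obtains the bigraded algebra isomorphism
\[
\tau\mathrm{C}_P \;\cong\; \frac{\C[x,\tau]}{\{x^a\tau^b \mid a, b \in \N,\ a+b > n\}},
\]
with $\deg(\tau) = (0,2)$. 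A $\C$-basis is given by the $\binom{n+2}{2}$ monomials $x^a\tau^b$ with $a+b \leq n$, and $x^a \tau^b$ lies in the cell $\mH^a(\Nt_P, \wedge^{a+2b} T\Nt_P)^{-2a-2b}$ of the formal Hodge diamond \eqref{table-hodge-proj}.

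Next, I would invoke Corollary \ref{cor-sl2-subalgebra}(i) to get an injection of bigraded algebras $\tau\mathrm{C}_P \hookrightarrow \zl_P$. Injectivity can be verified directly from Theorem \ref{thm-sl2-action}: by \eqref{eqn-first-column} the classes $x^a$ are nonzero in $\mH^a(\Nt_P, \wedge^a T\Nt_P)^{-2a}$ for $0 \leq a \leq n$, and wedging with $\tau^b$ sends them, via an $\mathfrak{sl}_2$-raising operator, into nonzero elements of $\mH^a(\Nt_P, \wedge^{a+2b} T\Nt_P)^{-2a-2b}$ as long as $a+b \leq n$; these elements occupy pairwise distinct bigraded pieces and are hence linearly independent.

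Finally, I would conclude by counting dimensions. Theorem \ref{thm-dim-1} asserts that each cell in Table \eqref{table-hodge-proj} has dimension exactly one, and the number of cells is $1 + 2 + \cdots + (n+1) = \binom{n+2}{2}$. Therefore
\[
\dim_\C \zl_P \;=\; \binom{n+2}{2} \;=\; \dim_\C \tau\mathrm{C}_P,
\]
so the inclusion of the previous step must be an equality, giving the asserted presentation. The bulk of the work has already been done in Theorem \ref{thm-dim-1}; what remains is essentially bookkeeping, and the only small point to double-check is that the bidegrees $\deg(x) = (1,1)$ and $\deg(\tau) = (0,2)$ correctly place the monomials $x^a\tau^b$ in pairwise distinct cells of the Hodge diamond, which is immediate. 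Accordingly, no serious obstacle is anticipated in executing this plan.
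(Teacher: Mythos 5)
Your proof is correct and takes essentially the same route as the paper: identify $\mathrm{C}_P$ with $\C[x]/(x^{n+1})$, wedge with powers of $\tau$, and use the cell-by-cell one-dimensionality from Theorem \ref{thm-dim-1} to see that this exhausts $\zl_P$. The paper's version is terser but the underlying argument coincides; your added detail on injectivity via Theorem \ref{thm-sl2-action} and the explicit dimension count $\binom{n+2}{2}$ is sound and just makes the bookkeeping explicit.
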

\begin{proof} 
Identify the cohomology ring of $\PS^n$ with $\C[x]/(x^{n+1})$. By Theorem \ref{thm-dim-1}, wedging the powers of $x$ with $\tau$ gives us the full Hochschild cohomology ring. The result follows.
\end{proof}

\begin{corollary}  \label{cor_proj_sp}
Let $\lambda \in \Cg_{\rm sing}$ be an integral weight singular with respect to the dot action of the Weyl group, and such that 
for the corresponding parabolic subgroup $P_\lambda$, we have $G/P_\lambda \simeq \PS^n$. Then the center $\zl^\lambda$ of the block $\ul^\lambda$ is isomorphic, as a bigraded communtative algebra, to 
\[ \zl^\lambda \simeq  \frac{\C[x,\tau]}{\{x^a\tau^b|a,b\in \N,\ a+b>n\}},
\]
where $\mathrm{deg}(x)=(1,1)$ and $\mathrm{deg}(\tau)=(0,2)$. \hfill $\square$
\end{corollary}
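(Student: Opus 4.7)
The plan is to obtain this corollary as an essentially immediate combination of two earlier results: the geometric description of the center of a singular block in Corollary \ref{BeLa_sing}, and the explicit computation of the zeroth Hochschild cohomology of $T^\ast \PS^n$ in Corollary \ref{cor-algebra-structure}. So the proof is really just a matter of assembling identifications rather than doing new work.

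First, I would invoke Corollary \ref{BeLa_sing} for the singular weight $\lambda$, which identifies $\zl^\lambda$ with $\mathrm{HH}^0_{\C^\ast}(\Nt_{P_\lambda})$, realized as the bigraded commutative algebra
\[
\zl^\lambda \;\cong\; \bigoplus_{i+j+k=0} \mH^i(\Nt_{P_\lambda}, \wedge^j T\Nt_{P_\lambda})^k.
\]
By hypothesis, $X=G/P_\lambda \simeq \PS^n$, so $\Nt_{P_\lambda} = T^\ast X = T^\ast \PS^n$. This is precisely the variety analyzed in Section \ref{sec-proj-space}.

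Next, I would apply Corollary \ref{cor-algebra-structure}, which states that for $\Nt_P = T^\ast \PS^n$ the degree-zero Hochschild cohomology is isomorphic, as a bigraded commutative algebra, to $\C[x,\tau]/\{x^a\tau^b \mid a+b > n\}$ with $\deg(x)=(1,1)$ and $\deg(\tau)=(0,2)$. Composing the two identifications gives exactly the stated formula for $\zl^\lambda$.

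Since both Corollary \ref{BeLa_sing} and Corollary \ref{cor-algebra-structure} are already in hand, there is essentially no genuine obstacle; the only point requiring a moment of care is to verify that the bigrading conventions match (the homological degree $i$ paired with the exterior degree $j$ on one side, and the weights assigned to $x$ and $\tau$ on the other), and that the generator $x$ on the geometric side indeed corresponds to the class in $\mH^1(\Nt_P, T\Nt_P)^{-2}$ coming from the hyperplane class in $\mH^1(\PS^n,\Omega_{\PS^n})$ via the inclusion of the parabolic coinvariant subalgebra $\mathrm{C}_P \subset \zl_P$ of Corollary \ref{cor-sl2-subalgebra}. Once this bookkeeping is checked, the corollary follows immediately.
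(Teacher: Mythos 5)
Your proposal is exactly the paper's argument: the corollary is stated with a $\square$ and no separate proof precisely because it is the immediate composition of Corollary \ref{BeLa_sing} (identifying $\zl^\lambda$ with $\mathrm{HH}^0_{\C^*}(\Nt_{P_\lambda})$) with Corollary \ref{cor-algebra-structure} (computing that algebra for $T^*\PS^n$). Your added remark about checking the bigrading conventions and the geometric meaning of $x$ is sensible bookkeeping but not a genuine departure.
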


\subsection{Diagonal subspaces}\label{sec-diagonal-spaces}
In this subsection, we establish one general feature that holds for the Hochschild cohomology of $\Nt_P:=T^*(G/P)$, with $G$ being an (almost) simple Lie group and $P$ being a parabolic. Namely, we will prove the following result.

\begin{theorem}\label{thm-diagonal-ones}
Let $G$ be a simple complex Lie group. The Hochschild homology groups on the main diagonal of the formal Hodge diamond \eqref{table-hodge} have entries all isomorphic to the trivial $G$-representation $L_0$: for any $r\in \{0,1,\dots, n=\mathrm{dim}(G/P)\}$,
\[
\mH^0(\Nt_P,\wedge^{2r}T\Nt_P)^{-2r}\cong L_0.
\]
Furthermore, an element spanning this one-dimensional space is given by $\tau^r$, the $r$th power of the Poisson bivector field.
\end{theorem}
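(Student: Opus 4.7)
The plan is to exhibit a nonzero $G$-invariant section explicitly (for the lower bound), and to use Corollary~\ref{cor-wedge-P-structure} together with Bott's theorem (Proposition~\ref{rel_lie_BGG}) for the matching upper bound.

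For the lower bound, I would argue that $\tau\in\mH^0(\Nt_P,\wedge^2 T\Nt_P)^{-2}$ is nowhere vanishing---being Poisson-dual to the nowhere vanishing canonical holomorphic symplectic form $\omega$---so $\tau^r$ is a nonzero global section of $\wedge^{2r}T\Nt_P$ of $\C^*$-weight $-2r$. Since $\omega$ is $G$-invariant on the cotangent bundle of the $G$-variety $X$, so is $\tau$, and $\tau^r$ spans a copy of the trivial representation $L_0$ inside $\mH^0(\Nt_P,\wedge^{2r}T\Nt_P)^{-2r}$.

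For the upper bound, I would compute using
\[
\mH^0(\Nt_P,\wedge^{2r}T\Nt_P)^{-2r}\;\cong\;\mH^0(X,\mathrm{pr}_*(\wedge^{2r}T\Nt_P)^{-2r}),
\]
combined with Corollary~\ref{cor-wedge-P-structure}. Writing $V_1$ as the free $S^\bullet(\lu_P)$-module on $\lu_P$ (in degree $0$) and $\n_P$ (in degree $-2$), the $\C^*$-weight $-2r$ component of $\wedge^{2r}_{S^\bullet(\lu_P)}V_1$ carries a natural $P$-equivariant filtration whose associated graded pieces are $S^a\lu_P\otimes\wedge^{r-a}\lu_P\otimes\wedge^{r+a}\n_P$ for $a=0,1,\dots,\min(r,n-r)$. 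Consequently the push-forward is filtered by
\[
\mathcal{F}_a\;:=\;G\times_P\bigl(S^a\lu_P\otimes\wedge^{r-a}\lu_P\otimes\wedge^{r+a}\n_P\bigr)\;\cong\;S^a TX\otimes\wedge^{r-a}TX\otimes\wedge^{r+a}\Omega_X.
\]
Via the long exact cohomology sequences attached to this filtration, the theorem reduces to showing $\dim\mH^0(X,\mathcal{F}_0)\le 1$ and $\mH^0(X,\mathcal{F}_a)=0$ for $a\ge 1$; combined with the existence of $\tau^r$, this forces equality.

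Each such sheaf cohomology I would evaluate by pulling back to the full flag variety via Lemma~\ref{lem-reduce-to-flag} and applying Proposition~\ref{rel_lie_BGG} with $\nu=0$: this identifies the relevant dimension with that of the $\lb$-invariants $(V^\h)^\n$ for $V=S^a\lu_P\otimes\wedge^{r-a}\lu_P\otimes\wedge^{r+a}\n_P$. Since $\lu_P$ and $\n_P$ carry opposite $\h$-weights supported outside the Levi roots, a weight count shows that a nontrivial $\h$-invariant can only arise when $a=0$, and in that case the $\n$-highest-weight condition should collapse the invariants down to the one-dimensional ``trace'' line---whose pull-back is the class of $\tau^r$.

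The main obstacle is precisely this weight/highest-weight analysis, in particular verifying that the trace is the \emph{only} $\lb$-highest-weight vector for $a=0$ and that no such vector exists for $a\ge 1$. An alternative---and likely cleaner---route is to induct on $r$ using the $\mathfrak{sl}_2$-structure of Theorem~\ref{thm-sl2-action}: assuming the conclusion for $s<r$, the commutator identity $[\iota_\omega,\tau\wedge]=n-\mathrm{deg}$ yields $\iota_\omega(\tau^r)=r(n+1-r)\,\tau^{r-1}\ne 0$, so $\iota_\omega:\mH^0(\Nt_P,\wedge^{2r}T\Nt_P)^{-2r}\to \C\tau^{r-1}$ is surjective, and the vanishing of the primitive kernel then reduces to independent dimension bounds on entries higher in the formal Hodge diamond, which can be supplied by a parallel push-forward computation.
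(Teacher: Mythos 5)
Your lower bound (the $G$-invariant, nowhere-vanishing section $\tau^r$) and your first reduction step -- pushing forward along $\mathrm{pr}$ and filtering $\mathrm{pr}_*(\wedge^{2r}T\Nt_P)^{-2r}$ by the subquotients $S^{a}TX\otimes\wedge^{r-a}TX\otimes\Omega_X^{r+a}$ via Corollary \ref{cor-wedge-P-structure} -- coincide with the paper's proof. The gap is in how you bound the global sections of these subquotients. First, the asserted ``weight count,'' namely that a zero-weight ($\h$-invariant) vector can occur only when $a=0$, is false in general: already for $\g=\mathfrak{sl}_3$, $P=B$, $r=2$, $a=1$, the vector $e_\theta\otimes e_\theta\otimes\left(f_{\alpha_1}\wedge f_{\alpha_2}\wedge f_{\theta}\right)\in S^1\lu_P\otimes\wedge^1\lu_P\otimes\wedge^{3}\n_P$ with $\theta=\alpha_1+\alpha_2$ has weight zero. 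So the vanishing for $a\geq 1$ cannot be read off from weights alone; one must analyze $\n$-invariance (equivalently the kernel of $d_1^*$ in the BGG complex), which is exactly the step you label ``the main obstacle'' and do not carry out. Second, even granting that analysis, computing weight-zero $\n$-invariants via Theorem \ref{Bott_rel_lie} or Proposition \ref{rel_lie_BGG} with $\nu=0$ only bounds the multiplicity of $L_0$ in $\mH^0$; the theorem asserts the whole space is $L_0$, so you must also exclude every nontrivial $L_\nu$, i.e.\ repeat the highest-weight analysis over all dominant $\nu$ occurring in these $P$-modules, which your proposal does not address. Your fallback $\mathfrak{sl}_2$-induction has the same defect: $\iota_\omega(\tau^r)=r(n+1-r)\tau^{r-1}$ gives surjectivity onto $\C\tau^{r-1}$, but the vanishing of $\ker\iota_\omega$ (the primitive part) is precisely the content to be proved, and deferring it to ``independent dimension bounds higher in the formal Hodge diamond'' is circular as stated.

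The paper closes exactly this gap with a different tool: slope stability on the K\"{a}hler--Einstein homogeneous space $X$. Each subquotient with $a\geq 1$ embeds into $TX^{\otimes r}\otimes\Omega_X^{\otimes(r+a)}$, which is semistable of negative slope, so it has no global sections (Corollary \ref{cor-T-tensor-Omega}); for $a=0$ the bundle $\wedge^rTX$ is stable (Lemma \ref{lemma-stablility-sym-wedge}), hence simple (Lemma \ref{lemma-stable-bundles-are-simple}), so $\mH^0(X,\wedge^rTX\otimes\Omega_X^r)=\C\cdot\mathrm{Id}$. This bounds the entire cohomology group -- all $G$-isotypic components at once -- by one dimension, and the nonvanishing invariant section $\tau^r$ then forces equality and $G$-triviality. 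If you want to salvage your route, you must either import this stability input or genuinely perform the BGG/highest-weight computation for all dominant $\nu$ and all $a$, not just the weight-zero count.
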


The proof of this result will rely on some basic facts from K\"{a}hler-Einstein geometry, which we recall now. The reader is referred to the book by \cite{HuyModuli} for more details.

Let $X$ be an $n$-dimensional complex K\"{a}hler manifold with an ample line bundle $\mathcal{L}$ whose first Chern class will be denoted $c_1(\mathcal{L})=H$. Recall that, given any coherent sheaf $\mathcal{E}$ on $X$ (see, for instance, \cite[Section 1.2]{HuyModuli}) the \emph{slope} of $\mathcal{E}$ is the number
\begin{equation}\label{eqn-slope}
\mu(\mathcal{E}):= \frac{c_1(\mathcal{E})\cdot H^{n-1}}{\mathrm{rank}(\mathcal{E})}.
\end{equation}
The vector bundle $\mathcal{E}$ is called \emph{slope stable} (resp. \emph{slope semistable}) if, for any subsheaf $\mathcal{F}\subset \mathcal{E}$, the inequality
\begin{equation}\label{eqn-slope-inequality}
\mu(\mathcal{F}) < \mu(\mathcal{E})\quad (\textrm{resp.}~\mu(\mathcal{F}) \leq \mu(\mathcal{E}))
\end{equation}
holds. 

Notice that this definition of (semi)stability depends on a choice of an ample class $H$ on $X$. However, when $\mathcal{E}$ is a vector bundle on $X$ which is equipped with a \emph{K\"{a}hler-Einstein metric}, i.e., a Hermitian metric whose Ricci curvature tensor is proportional to the K\"{a}hler metric, then the bundle is semistable with respect to any ample class $H$. Another useful fact that we will use is that any semistable vector bundle is always a direct sum of its stable summands of identical slope. For the proof of results, see \cite[Section 1.6]{HuyModuli}.

A family of vector bundles that admit K\"{a}hler-Einstein metrics are given by the tangent bundle $TX$ or cotangent bundle $\Omega_X$ of a compact homogeneous K\"{a}hler manifold $X=G/P$ with any homogeneous Hermitian metric.
Furthermore, when $G$ is a simple complex algebraic group, both $TX$ and $\Omega_X$ are stable since they are indecomposable. By taking their tensor products and also tensor product of the corresponding metrics, one can show that $TX^{\otimes k}\otimes \Omega_X^{\otimes l}$ and their direct summands are semistable for any $k,l\in \N$. It follows that any (indecomposable) direct summand of these tensor product bundles must be semistable (stable).

\begin{lemma}\label{lemma-stable-bundles-are-simple}
If $\mathcal{E}$ is a slope stable vector bundle on a compact K\"{a}hler manifold $X$, then $\mathcal{E}$ is simple in the sense that
\[
\mathrm{End}_{\Ox_X}(\mathcal{E})\cong \C.
\]
\end{lemma}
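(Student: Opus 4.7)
The plan is to show that every endomorphism $\phi \in \mathrm{End}_{\Ox_X}(\mathcal{E})$ must be a scalar multiple of the identity. Since $X$ is compact and $\mathcal{H}om_{\Ox_X}(\mathcal{E}, \mathcal{E})$ is a coherent sheaf, $\mathrm{End}_{\Ox_X}(\mathcal{E}) = \mH^0(X, \mathcal{H}om_{\Ox_X}(\mathcal{E}, \mathcal{E}))$ is a finite-dimensional $\C$-algebra. I would pick any closed point $x \in X$ and consider the induced endomorphism $\phi_x$ of the fiber $\mathcal{E}_x$; as an endomorphism of a finite-dimensional $\C$-vector space, it admits an eigenvalue $\lambda \in \C$. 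Setting $\psi := \phi - \lambda \cdot \mathrm{id}_\mathcal{E}$, it then suffices to prove $\psi = 0$.

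The first key step upgrades the pointwise fact that $\psi_x$ fails to be invertible to a sheaf-theoretic statement. The induced map $\det(\psi)$ defines an element of $\mH^0(X, \mathcal{H}om_{\Ox_X}(\det\mathcal{E}, \det\mathcal{E})) = \mH^0(X, \Ox_X) = \C$, hence is a global constant; since it vanishes at $x$, we get $\det(\psi) \equiv 0$. Consequently $\psi$ fails to be injective at the generic point of $X$, so $\mathcal{K} := \ker(\psi) \subset \mathcal{E}$ is a nonzero coherent subsheaf.

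I would then derive a contradiction from the assumption $\psi \neq 0$. Under this hypothesis, both $\mathcal{K}$ and the image $\mathcal{I} := \mathrm{Im}(\psi)$ have rank strictly between $0$ and $\mathrm{rank}(\mathcal{E})$, so slope stability applies to each, yielding $\mu(\mathcal{K}) < \mu(\mathcal{E})$ and $\mu(\mathcal{I}) < \mu(\mathcal{E})$. Additivity of rank and of $c_1(\cdot) \cdot H^{n-1}$ along the short exact sequence $0 \to \mathcal{K} \to \mathcal{E} \to \mathcal{I} \to 0$ then forces
\begin{equation*}
\mathrm{rank}(\mathcal{E})\, \mu(\mathcal{E}) = \mathrm{rank}(\mathcal{K})\, \mu(\mathcal{K}) + \mathrm{rank}(\mathcal{I})\, \mu(\mathcal{I}) < \mathrm{rank}(\mathcal{E})\, \mu(\mathcal{E}),
\end{equation*}
a contradiction. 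Hence $\psi = 0$ and $\phi = \lambda \cdot \mathrm{id}_\mathcal{E}$, proving $\mathrm{End}_{\Ox_X}(\mathcal{E}) \cong \C$. The only delicate point in this plan is the passage from the pointwise failure of invertibility of $\psi_x$ to the sheaf-level nonvanishing of $\ker(\psi)$, which is handled cleanly by the determinant-is-a-global-constant observation; the remaining slope arithmetic is routine bookkeeping.
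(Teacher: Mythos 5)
Your argument is correct and is precisely the standard proof of this classical fact: the paper itself supplies no proof, deferring to \cite[Corollary 1.2.8]{HuyModuli}, and your eigenvalue trick followed by the slope estimate on the kernel/image of $\psi=\phi-\lambda\,\mathrm{id}$ is exactly the argument found there. The passage from $\det\psi_x=0$ to $\det\psi\equiv 0$ via $\mH^0(X,\Ox_X)=\C$ on a compact connected manifold, and the subsequent slope arithmetic on $0\to\mathcal{K}\to\mathcal{E}\to\mathcal{I}\to 0$, are both handled correctly.
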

\begin{proof}
This is well-known. See, for instance, \cite[Corollary 1.2.8]{HuyModuli}.
\end{proof}

We also record some basic Chern class computations for later use.

\begin{lemma}\label{lemma-tensor-chern-class}
Let $\mathcal{E}_1$ and $\mathcal{E}_2$ be complex vector bundles on manifold $X$. Then
\[
c_1(\mathcal{E}_1\otimes \mathcal{E}_2)=\mathrm{rank}(\mathcal{E}_1)c_1(\mathcal{E}_2)+\mathrm{rank}(\mathcal{E}_2)c_1(\mathcal{E}_1).
\]
Consequently, for any vector bundle $\mathcal{E}$ on $X$, we have
\[
c_1(\mathcal{E}^{\otimes k})= k (\mathrm{rank}(\mathcal{E}))^{k-1}c_1(\mathcal{E}).
\]
\end{lemma}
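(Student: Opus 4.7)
The plan is to invoke the \emph{splitting principle} for complex vector bundles: for any statement about Chern classes that is polynomial/functorial in nature, one may pull back to an auxiliary space (the full flag bundle of the given vector bundle) on which the original bundle splits as a direct sum of line bundles without losing Chern class information. In this way both identities reduce to elementary manipulations with first Chern classes of line bundles, which add under tensor product: $c_1(L\otimes M)=c_1(L)+c_1(M)$.

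For the first identity, suppose after pulling back that $\mathcal{E}_1\cong L_1\oplus\cdots\oplus L_r$ and $\mathcal{E}_2\cong M_1\oplus\cdots\oplus M_s$, where $r=\mathrm{rank}(\mathcal{E}_1)$ and $s=\mathrm{rank}(\mathcal{E}_2)$. Then $\mathcal{E}_1\otimes\mathcal{E}_2\cong\bigoplus_{i,j}(L_i\otimes M_j)$, so by the Whitney sum formula and the line-bundle identity,
\begin{equation*}
c_1(\mathcal{E}_1\otimes\mathcal{E}_2)=\sum_{i,j}\bigl(c_1(L_i)+c_1(M_j)\bigr)=s\sum_i c_1(L_i)+r\sum_j c_1(M_j)=s\,c_1(\mathcal{E}_1)+r\,c_1(\mathcal{E}_2),
\end{equation*}
which gives the stated formula after noting that $s=\mathrm{rank}(\mathcal{E}_2)$ and $r=\mathrm{rank}(\mathcal{E}_1)$.

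For the second identity, I would induct on $k$. The case $k=1$ is trivial. Assuming the formula holds for $k-1$ with $r:=\mathrm{rank}(\mathcal{E})$, apply the first identity to $\mathcal{E}^{\otimes k}=\mathcal{E}^{\otimes(k-1)}\otimes\mathcal{E}$, noting that $\mathrm{rank}(\mathcal{E}^{\otimes(k-1)})=r^{k-1}$:
\begin{equation*}
c_1(\mathcal{E}^{\otimes k})=r^{k-1}c_1(\mathcal{E})+r\cdot c_1(\mathcal{E}^{\otimes(k-1)})=r^{k-1}c_1(\mathcal{E})+r\cdot(k-1)r^{k-2}c_1(\mathcal{E})=k\,r^{k-1}c_1(\mathcal{E}).
\end{equation*}

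There is no real obstacle here: the statement is a classical consequence of the splitting principle and the Whitney formula, and the only thing to be careful about is keeping track of ranks in the inductive step. One could equally well argue directly via the formal logarithm of the Chern character (since $\mathrm{ch}(\mathcal{E}_1\otimes\mathcal{E}_2)=\mathrm{ch}(\mathcal{E}_1)\cdot\mathrm{ch}(\mathcal{E}_2)$ and the degree-two part of $\mathrm{ch}$ is $c_1$), but the splitting-principle argument above is the shortest.
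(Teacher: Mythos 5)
Your proof is correct and follows essentially the same route as the paper, which also cites the splitting principle and reduces the lemma to counting Chern roots of line bundles. You merely spell out the Whitney-sum computation and the inductive step in more detail, which the paper leaves implicit.
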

\begin{proof}By the splitting principle, one may assume these bundles are direct sums of line bundles, and the lemma is reduced to counting the Chern roots of these line bundles.
\end{proof}

\begin{lemma}\label{lemma-wedge-chern-class}
Let $\mathcal{E}$ be a complex vector bundle of rank $r$ on a manifold $X$, then
\[
c_1(\wedge^l \mathcal{E}) = {r-1 \choose l-1} c_1(\mathcal{E}).
\]
\end{lemma}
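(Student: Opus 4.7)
The plan is to invoke the splitting principle, exactly as in the proof of the previous lemma, and then do a combinatorial count of Chern roots. By the splitting principle, to verify a universal polynomial identity among Chern classes it suffices to treat the case where $\mathcal{E}$ splits as a direct sum of line bundles $L_1 \oplus \cdots \oplus L_r$, with Chern roots $x_i := c_1(L_i)$. In that case $c_1(\mathcal{E}) = \sum_{i=1}^r x_i$.

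Next, under such a splitting, $\wedge^l \mathcal{E}$ decomposes as the direct sum of line bundles
\[
\wedge^l \mathcal{E} \;\cong\; \bigoplus_{1\le i_1 < i_2 < \cdots < i_l \le r} L_{i_1}\otimes L_{i_2}\otimes \cdots \otimes L_{i_l},
\]
and the first Chern class of a direct sum is additive while the first Chern class of a tensor product of line bundles is additive in the $x_i$. Hence
\[
c_1(\wedge^l \mathcal{E}) \;=\; \sum_{1\le i_1 < \cdots < i_l \le r} \bigl( x_{i_1} + \cdots + x_{i_l} \bigr).
\]

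The last step is the combinatorial identity: in the sum above, each individual Chern root $x_i$ appears exactly as many times as there are $l$-element subsets of $\{1,\dots,r\}$ containing the fixed index $i$, namely $\binom{r-1}{l-1}$. Thus
\[
c_1(\wedge^l \mathcal{E}) \;=\; \binom{r-1}{l-1} \sum_{i=1}^r x_i \;=\; \binom{r-1}{l-1}\, c_1(\mathcal{E}),
\]
as claimed. There is no real obstacle here; the only subtlety is the standard justification of the splitting principle (pull back to the flag bundle of $\mathcal{E}$, where the pullback of $\mathcal{E}$ acquires a complete filtration by line subbundles and the pullback on cohomology is injective), which is exactly the same device already used in the preceding lemma and need only be cited.
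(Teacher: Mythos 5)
Your proof is correct, and it uses exactly the device (splitting principle plus a count of Chern roots) that the paper applies to the preceding lemma; the paper itself leaves this one as an ``Exercise,'' clearly intending the same argument. Nothing to add.
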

\begin{proof}
Exercise.
\end{proof}

\begin{corollary}\label{cor-chern-class-computation}Let $X$ be a manifold and $\mathcal{E}$ be a complex vector bundle on $X$ of rank $r$. Then
\[
c_1(\mathcal{E}^{\otimes k} \otimes \wedge^l\mathcal{E}^*)=
(k-l)r^{k-1}{r\choose l}c_1(\mathcal{E}).
\]
\end{corollary}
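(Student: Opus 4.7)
The plan is to combine the tensor-product formula of Lemma \ref{lemma-tensor-chern-class} with the wedge-product formula of Lemma \ref{lemma-wedge-chern-class}, treating $\mathcal{E}^{\otimes k}$ and $\wedge^l \mathcal{E}^*$ as the two factors of a single tensor product. First I would record the ranks $\mathrm{rank}(\mathcal{E}^{\otimes k})=r^k$ and $\mathrm{rank}(\wedge^l\mathcal{E}^*)=\binom{r}{l}$, so that Lemma \ref{lemma-tensor-chern-class} yields
\[
c_1(\mathcal{E}^{\otimes k}\otimes \wedge^l\mathcal{E}^*) = r^k\, c_1(\wedge^l\mathcal{E}^*)+\binom{r}{l}\,c_1(\mathcal{E}^{\otimes k}).
\]

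Next, I would compute each of the two Chern classes on the right. The second part of Lemma \ref{lemma-tensor-chern-class} gives $c_1(\mathcal{E}^{\otimes k}) = kr^{k-1}c_1(\mathcal{E})$. For the wedge factor, Lemma \ref{lemma-wedge-chern-class} applied to $\mathcal{E}^*$, together with the identity $c_1(\mathcal{E}^*)=-c_1(\mathcal{E})$ (which follows from the splitting principle, or from the fact that the Chern roots of $\mathcal{E}^*$ are the negatives of those of $\mathcal{E}$), gives $c_1(\wedge^l\mathcal{E}^*)=-\binom{r-1}{l-1}c_1(\mathcal{E})$. Substituting these into the formula above leads to
\[
c_1(\mathcal{E}^{\otimes k}\otimes \wedge^l\mathcal{E}^*)= r^{k-1}\left(k\binom{r}{l}-r\binom{r-1}{l-1}\right)c_1(\mathcal{E}).
\]

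The remaining step is the elementary binomial identity $r\binom{r-1}{l-1}=l\binom{r}{l}$, which follows immediately by writing both sides as $\frac{r!}{(l-1)!(r-l)!}$. Plugging this in collapses the parenthesis to $(k-l)\binom{r}{l}$, giving the claimed formula. There is no genuine obstacle here; the only point to watch is the sign in $c_1(\mathcal{E}^*)$ and the bookkeeping of which rank multiplies which $c_1$ in the tensor-product formula.
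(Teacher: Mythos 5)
Your proof is correct and follows exactly the route the paper takes: apply the tensor-product formula from Lemma \ref{lemma-tensor-chern-class}, compute $c_1(\mathcal{E}^{\otimes k})$ and $c_1(\wedge^l\mathcal{E}^*)$ from the two lemmas plus $c_1(\mathcal{E}^*)=-c_1(\mathcal{E})$, then collapse via $r\binom{r-1}{l-1}=l\binom{r}{l}$. The only difference is that you spell out the binomial identity explicitly, which the paper leaves implicit in its final simplification.
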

\begin{proof}
Using the previous Lemmas \ref{lemma-tensor-chern-class} and \ref{lemma-wedge-chern-class}, we compute
\begin{eqnarray*}
c_1(\mathcal{E}^{\otimes k} \otimes \wedge^l \mathcal{E}^*) & = & {r\choose l}c_1(\mathcal{E}^{\otimes k})+r^{k}c_1(\wedge^l\mathcal{E}^*)\\
& = & {r \choose l} k r^{k-1}c_1(\mathcal{E})-r^k{r-1\choose l-1}c_1(\mathcal{E})\\
& = & r^{k-1}{r\choose l}(k-l)c_1(\mathcal{E}),
\end{eqnarray*}
as desired.
\end{proof}

From now on, we fix $X$ to be $G/P$ associated with a simple complex Lie group, and take $\mathcal{E}=TX$ which is then slope stable. We will also fix $H=c_1(TX)=-K_X$ to be the anti-canonical class which is ample, although this is not important by the discussion above.

\begin{corollary}\label{cor-T-tensor-Omega}
For any pair of integers $(k,l)$ satisfying $0\leq k <l \leq n=\mathrm{dim}(X)$, the vector bundle $TX^{\otimes k}\otimes \Omega_X^{\otimes l}$ has no nonzero global sections. In other words
\[
\mH^0(X,TX^{\otimes k}\otimes \Omega_X^l)=0.
\]
\end{corollary}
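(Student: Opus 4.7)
The plan is to exploit the slope semistability of $TX^{\otimes k}\otimes \Omega_X^{\otimes l}$ that was recalled just before Lemma \ref{lemma-stable-bundles-are-simple}, together with the Chern class formulas of Lemma \ref{lemma-tensor-chern-class}. Throughout, I take the ample class to be $H := c_1(TX)$, which is the first Chern class of the anti-canonical bundle on the Fano variety $X = G/P$. The strategy is to observe that, under the hypothesis $k < l$, the slope of $TX^{\otimes k}\otimes \Omega_X^{\otimes l}$ with respect to $H$ is strictly negative, while $\mu(\mathcal{O}_X) = 0$; the resulting inclusion $\mathcal{O}_X \hookrightarrow TX^{\otimes k}\otimes \Omega_X^{\otimes l}$ produced by a nonzero global section will then contradict semistability.

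For the slope computation, I would apply Lemma \ref{lemma-tensor-chern-class} to the pair $TX^{\otimes k}$ and $\Omega_X^{\otimes l}$, of ranks $n^k$ and $n^l$ respectively (where $n := \dim_\C X$). Combined with the identity $c_1(\Omega_X) = -c_1(TX)$, this gives
\[
c_1(TX^{\otimes k}\otimes \Omega_X^{\otimes l}) = n^l\cdot k\, n^{k-1}\, c_1(TX) \;-\; n^k\cdot l\, n^{l-1}\, c_1(TX) = (k-l)\, n^{k+l-1}\, H.
\]
Dividing by the total rank $n^{k+l}$ and pairing with $H^{n-1}$ yields
\[
\mu(TX^{\otimes k}\otimes \Omega_X^{\otimes l}) = \frac{k-l}{n}\, H^n.
\]
Since $H$ is ample, $H^n > 0$, and the hypothesis $k < l$ forces this slope to be strictly negative.

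To conclude, a nonzero element of $\mH^0(X, TX^{\otimes k}\otimes \Omega_X^{\otimes l})$ would supply a nonzero morphism $\mathcal{O}_X \to TX^{\otimes k}\otimes \Omega_X^{\otimes l}$; as the source is torsion-free of rank one and the target is locally free, such a morphism is automatically injective, exhibiting $\mathcal{O}_X$ as a subsheaf. Semistability then forces the inequality $0 = \mu(\mathcal{O}_X) \leq \mu(TX^{\otimes k}\otimes \Omega_X^{\otimes l}) < 0$, a contradiction. I do not anticipate any real obstacle: the whole argument is a direct application of the slope-stability machinery reviewed in this subsection, once the Chern class identity above is in place.
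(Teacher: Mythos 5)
Your argument is correct and follows essentially the same route as the paper's: fix the polarization $H=c_1(TX)=-K_X$, use the Chern class formulas to compute $\mu(TX^{\otimes k}\otimes\Omega^{l})=\tfrac{k-l}{n}H^n<0$ when $k<l$, and then derive a contradiction with slope semistability from any nonzero global section. The only difference is notational: you treat $\Omega_X^{\otimes l}$ as a tensor power and apply Lemma \ref{lemma-tensor-chern-class} directly, while the paper treats $\Omega_X^l=\wedge^l\Omega_X$ and invokes Corollary \ref{cor-chern-class-computation}; both computations yield the same slope $\tfrac{k-l}{n}H^n$, and your tensor-power vanishing in fact implies the exterior-power statement since $\wedge^l\Omega_X$ is a direct summand of $\Omega_X^{\otimes l}$.
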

\begin{proof}
The bundle in question is semistable since it has a $G$-equivariant K\"{a}hler-Einstein metric, and its slope
\[
\mu(TX^{\otimes k}\otimes \Omega_X^l)=\frac{k-l}{n}(-K_X)^n
\] 
is negative if $k<l$ by Corollary \ref{cor-chern-class-computation}. Hence the bundle can not have any global section, since the image of such a global section in the bundle would otherwise be a subsheaf of non-negative slope. 
\end{proof}

\begin{lemma}\label{lemma-stablility-sym-wedge}
Let $X=G/P$ be the partial flag variety associated with a complex simple Lie group. Then, for any $k\in \N$, the bundles $S^k TX$ and $\wedge^k TX$ are stable.
\end{lemma}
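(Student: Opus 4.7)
The plan is to combine the Kobayashi--Hitchin correspondence with an indecomposability argument: first establish polystability via Hermite--Einstein metrics, then upgrade polystability to stability by showing the bundles are indecomposable.

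For polystability, since $X = G/P$ is a compact rational homogeneous variety under a complex simple group, it carries a $G$-invariant K\"ahler--Einstein metric, which induces a $G$-invariant Hermite--Einstein metric on $TX$. The $k$-fold tensor product metric on $TX^{\otimes k}$ is again Hermite--Einstein (with Einstein constant scaled by $k$), so by the Kobayashi--Hitchin correspondence $TX^{\otimes k}$ is polystable for every polarization, in particular for the anticanonical one fixed earlier in the section. The symmetrization and antisymmetrization idempotents in $\C[S_k]$ are $G$-equivariant and orthogonal with respect to this metric, so their images $S^k TX$ and $\wedge^k TX$ inherit Hermite--Einstein metrics and are themselves polystable.

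A polystable bundle is stable exactly when it is indecomposable as an $\Ox_X$-module. Writing $S^k TX \cong G \times_P S^k \lu_P$ (and similarly for $\wedge^k TX$), it then suffices to rule out a nontrivial holomorphic direct-sum decomposition. Since $G$ is connected and must permute isomorphic stable constituents, it preserves any such decomposition, and one can replace it by a $G$-equivariant one without loss of generality. Because $G$-equivariant direct summands of $G \times_P V$ correspond bijectively to $P$-module direct summands of $V$, the problem reduces to the fiber-level claim that $S^k \lu_P$ and $\wedge^k \lu_P$ admit no nontrivial $P$-equivariant idempotents.

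The main obstacle is this last representation-theoretic reduction. For cominuscule $P$, where $\lu_P$ is already an irreducible $L_P$-representation, the claim follows immediately from Schur's lemma applied to the $L_P$-isotypic decomposition of $S^k \lu_P$ (resp.\ $\wedge^k \lu_P$), since a $P$-equivariant idempotent is in particular $L_P$-equivariant and must therefore respect this decomposition. In general, $\lu_P$ carries a height filtration indexed by the level of positive roots not in $L_P$, with irreducible $L_P$-graded pieces, and the action of $\n_P$ strictly raises this filtration. A $P$-equivariant idempotent on $S^k \lu_P$ or $\wedge^k \lu_P$ must preserve the induced filtration; its action on the top graded piece is $L_P$-equivariant, hence a scalar by Schur's lemma, and the $\n_P$-action then propagates this scalar through the rest of the filtration, forcing the idempotent itself to be $0$ or $1$. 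This completes the argument.
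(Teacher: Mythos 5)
Your steps 1--4 are sound and make explicit the paper's own (rather terse) reduction: the homogeneous Hermite--Einstein metric gives polystability, and because a connected group $G$ must preserve the unique decomposition of a polystable bundle into isotypic pieces, stability of $G\times_P V$ is equivalent to indecomposability of $V$ as a $P$-module. So far the two proofs agree in substance.

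Step 5, the fiber-level indecomposability argument, is where you diverge from the paper and where there is a genuine gap. The paper proves $S^k\lu_P$ indecomposable by exhibiting a cyclic $U(\p)$-generator (the $k$th power of the extremal root vector), and then treats $\wedge^k\lu_P$ by a separate sign-twist/Schur--Weyl argument. Your filtration argument runs into two obstructions. First, the claim that the graded pieces of the $P$-height filtration on $\lu_P$ are $L_P$-irreducible is false in general: already for $P=B$ in $\mathfrak{sl}_3$, the height-one piece $\g_{-\alpha_1}\oplus\g_{-\alpha_2}$ is two-dimensional over the torus $L_P=T$, so Schur's lemma cannot pin the idempotent to a scalar there. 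Second, and decisively, when $P$ is cominuscule (i.e.\ $\n_P$ abelian, as for any Grassmannian $G/P$), the adjoint action of $\n_P$ on $\g/\p\cong\lu_P$ is identically zero, since $[\n_P,\lu_P]$ has $P$-height $0$ and hence lies in $\mathfrak{l}_P\subset\p$. In that case the ``$\n_P$-propagation'' has nothing to propagate, and your argument would require $S^k\lu_P$, $\wedge^k\lu_P$ to be irreducible over $L_P$ outright --- which fails already for $k=2$ on $\mathrm{Gr}(2,4)$, where $\lu_P\cong \C^2\otimes(\C^2)^*$ over $L_P\cong S(GL_2\times GL_2)$ and
\[
S^2\lu_P\cong (S^2\boxtimes S^2)\oplus(\wedge^2\boxtimes\wedge^2),\qquad
\wedge^2\lu_P\cong (S^2\boxtimes\wedge^2)\oplus(\wedge^2\boxtimes S^2).
\]
A further, smaller issue: even when the most extremal graded piece of $\lu_P$ is one-dimensional (say $\g_{-\theta}$), its $k$th wedge power vanishes for $k\geq 2$, so the ``top'' graded piece of $\wedge^k\lu_P$ under the induced filtration is again not one-dimensional, and the Schur step fails for the exterior powers even in the Borel case.

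I should also flag that the $\mathrm{Gr}(2,4)$ computation above shows the lemma itself is false as stated: $\wedge^2 T\mathrm{Gr}(2,4)\cong (S^2S^*\otimes\wedge^2Q)\oplus(\wedge^2S^*\otimes S^2Q)$ is a direct sum of two non-isomorphic rank-$3$ subbundles of equal slope, hence polystable but not stable, and similarly $S^2T\mathrm{Gr}(2,4)$ decomposes. The paper's cyclicity argument fails here for the same reason (for cominuscule $P$, $U(\p)$ acts on $\lu_P$ only through $U(\mathfrak{l}_P)$, and $S^k\lu_P$ is $\mathfrak{l}_P$-reducible). So while your approach and the paper's are genuinely different, neither can succeed at full generality because the statement does not hold beyond special cases such as $P=B$ or $G/P\cong\PS^n$.
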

\begin{proof}
Identify the bundle $TX$ with $G\times_P \lu_P$, we are then reduced to showing that $S^k(\lu_P)$ and $\wedge^k(\lu_P)$ are indecomposable $P$-modules. For the symmetric power case, it is easy since the symmetric powers of a maximal root vector generates $S^k(\lu_P)$ by applying negative root vector from $\p$, so that $S^k(\lu_P)$ is in fact cyclic as a $U(\p)$-module. Any direct summand of $S^{k}(\lu_P)$ would either be the entire space if it contains the highest weight vector, or be zero if it does not. The result follows.

To show the claim for the exterior product, we use Schur-Weyl duality. Namely, since $S_k$ acts on $\lu_P^{\otimes k}$ by permuting tensor factors, the $\p$-direct summands are equipped with an involution induced by tensoring with the sign character of $S_k$, which interchanges $S^k(\lu_P)$ and $\wedge^k(\lu_P)$ when $k\leq \mathrm{dim}(\lu_P)$. Since $S^k(\lu_P)$ is indecomposable, then so must be $\wedge^k(\lu_P)$ if it is nonzero.
\end{proof}

Now we are ready to prove our main result in this subsection.

\begin{proof}[Proof of Theorem \ref{thm-diagonal-ones}] We push forward the $2r$th exterior product bundle $\wedge^{2r} T\Nt_P$ onto $X$, and pick out the degree-$(-2r)$ component $ \mathrm{pr}_*(\wedge^{2r}T\Nt_P)^{-2r} $.
We are reduced to compute the sheaf cohomology of the latter bundle on $X$.

By Corollary \ref{cor-wedge-P-structure}, $\mathrm{pr}_*(\wedge^{2r}T\Nt_P)^{-2r}$ has an increasing filtration whose associated graded parts are isomorphic (cf. Equation \ref{eqn-subquotients}) to the bundles
$$\mathcal{G}_l:=G\times_P(S^{r-l}\lu_P \otimes \wedge^{l}(\g/\p)\otimes \wedge^{2r-l}\n_P)\cong 
S^{r-l}T\otimes \wedge^lT \otimes \Omega^{2r-l},$$
where $l$ takes value in $\{\mathrm{max}(0,n-2r),\dots, r\}$. Since $\mathcal{G}_l$ is evidently a subbundle in
$T^{\otimes r}\otimes \Omega^{2r-l}$, we have, by Corollary \ref{cor-T-tensor-Omega}, that it has no nonzero global sections if $l<r$:
\[
\mH^0(X,\mathcal{G}_l)=\mH^0(X, S^{r-l}T\otimes \wedge^lT \otimes \Omega^{2r-l})\subset \mH^0(X, T^{\otimes r}\otimes \Omega^{2r-l})=0.
\]
When $l=r$, we have
\[
\mH^0(X,\mathcal{G}_r)=\mH^0(X,\wedge^r T \otimes \Omega^r),
\]
which has exactly a one-dimensional global section space spanned by $\mathrm{Id}_{\Omega^r}$ by Lemma \ref{lemma-stable-bundles-are-simple} and Lemma \ref{lemma-stablility-sym-wedge}. 

It follows that the sheaf $\mathrm{pr}_*(\wedge^{2r}T\Nt_P)^{-2r}$ has at most a one-dimensional global section space, and it is indeed one-dimensional because
\[
\tau^r\in \mH^0(\Nt_P,\wedge^{2r}T\Nt_P)^{-2r}=\mH^{0}(X,\mathrm{pr}_*(\wedge^{2r}T\Nt_P)^{-2r})
\]
is a nonvanishing section. 

Finally, $\tau^r$ is $G$-equivariant since $\tau$ is. This is because the Hamiltonian $G$ action on $\Nt_P$ preserves the symplectic form $\omega$, and thus it also preserves the dual bivector field $\tau$.
\end{proof}

\subsection{Singular blocks of \texorpdfstring{$A_3$}{A3}.}\label{sec-sing-blocks-A3}
In this subsection we will determine the bigraded dimensions of all singular block center of the small 
quantum group $\ul_q(\s_4)$. 

The decomposition of $\ul_q(\s_4)$ into blocks $\ul^\lambda \subset \ul_q(\s_4)$ is parametrized 
by the orbits of the extended affine Weyl group $\widetilde{W}_\Pg = W \ltimes l\Pg$ action on the set of the restricted dominant weights $\Pg_l$. 
(This is a consequence of the quantum linkage principle \cite{APW, APW2}). Then the block corresponding to the orbit containing the weight $\mu \in \Pg_l$ consists of left $\ul_q(\s_4)$-modules whose simple composition factors are of the form $L(w \cdot \mu)$, where $w \in \widetilde{W}_\Pg$ and $w \cdot \mu \in \Pg_l$.

Let $\mu \in \Pg_l$ and consider the orbit $\widetilde{W}_\Pg \cdot \mu \subset \Pg$. If it contains only regular weights with respect 
to the dot Weyl group action $W\cdot$, then the corresponding block is regular and its center is isomorphic to that of the principal block $\ul^0$, the latter having been computed in \cite{LQ1}. 
If the orbit $\widetilde{W}_\Pg \cdot \mu \subset \Pg$ contains a $W \cdot$ - singular weight $\lambda \in \bar{\Cg}_{\rm sing}$, 
then the block 
is singular and corresponds (not uniquely) to the parabolic subgroup $P_\lambda \subset G$ whose 
Weyl group can be identified with the stabilizer subgroup of $\lambda$ in $W$ with respect to the dot action. 
Therefore to each 
singular block $\ul^\lambda$, $\lambda \in \bar{\Cg}_{\rm sing}$, one can associate (possibly more than one) 
 isomorphism classe of parabolic subalgebras $\p \subset \g$, such that the Weyl group $W_P \subset W$ 
stabilizes $\lambda \in \Cg_{\rm sing}$. We label the parabolic subgroups of $SL_4(\C)$ as follows. 
\begin{equation}
\begin{array}{c} 
P_0= B= 
\left( 
\begin{matrix} 
 * & 0 & 0 & 0 \\ 
 * & * & 0 & 0 \\  
 * & * & * & 0  \\ 
 * & * & * & *  \\   
\end{matrix} 
\right),
\quad 
P_1= 
\left( 
\begin{matrix} 
 * & * & 0 & 0 \\ 
 * & * & 0 & 0 \\  
 * & * & * & 0  \\ 
 * & * & * & *  \\   
\end{matrix} 
\right),
\quad
P_2= 
\left( 
\begin{matrix} 
 * & 0 & 0 & 0 \\ 
 * & * & * & 0 \\  
 * & * & * & 0  \\ 
 * & * & * & *  \\   
\end{matrix} 
\right),
\\ \\
P_3= \left( 
\begin{matrix} 
 * & 0 & 0 & 0 \\ 
 * & * & 0 & 0 \\  
 * & * & * & * \\ 
 * & * & * & * \\   
\end{matrix} 
\right),
\quad    
P_4= 
\left( 
\begin{matrix} 
 * & * & * & 0 \\ 
 * & * & * & 0 \\  
 * & * & * & 0  \\ 
 * & * & * & *  \\   
\end{matrix} 
\right), 
\quad        
P_5= \left( 
\begin{matrix} 
 * & 0 & 0 & 0 \\ 
 * & * & * & * \\  
 * & * & * & * \\ 
 * & * & * & *  \\   
\end{matrix} 
\right),   
\\ \\
P_6= 
\left( 
\begin{matrix} 
 * & * & 0 & 0 \\ 
 * & * & 0 & 0 \\  
 * & * & * & *  \\ 
 * & * & * & *  \\   
\end{matrix} 
\right),  
\quad  
P_7= G=  
\left(  
\begin{matrix} 
* & * & * & * \\ 
* & * & * & * \\  
* & * & * & * \\ 
* & * & * & * \\   
\end{matrix} \right). 
\end{array}
\end{equation}
Here the notation for $P_i$ indicates the subgroup of $SL_4(\C)$ with nonzero entries only in the starred positions. Below we present the computations of the bigraded dimensions for each of the corresponding blocks of the center in the order of increasing complexity. As before, we will use the notations 
$
X:=G/P
$
and
$
\Nt_P:=T^*X.
$

\paragraph{The Steinberg block.} The Steinberg block is the unique simple block containing the Steinberg module $L((l-1)\rho)$, whose corresponding parabolic subgroup is $P_7=G$. The block is Morita equivalent to the category of finite-dimensional $\C$-vector spaces, and thus its center is $1$-dimensional.  This is reflected in the fact that, geometrically, both $X=G/G$ and $\Nt_P=T^*X$ consist of a single point.

\paragraph{The blocks corresponding to the projective space.} If $P = P_4 $ or $ P_5$, then $X\cong  \PS^3$. By the computations of Section \ref{sec-proj-space}, the formal Hodge diamond for $\Nt_P$ appears as follows.
 \begin{gather}  \label{table-hodge-pr3}
\begin{array}{|c||c|c|c|c|} \hline 
         \scriptstyle{ j+i=0 } & 1   &                         &                         &                          \\  \hline 
              \scriptstyle{ j+i=2 } & 1   &   1                 &                         &                          \\  \hline 
               \scriptstyle{ j+i =4 }   & 1   &  1  &    1    &                          \\  \hline
             \scriptstyle{ j+i=6 }    &  1   &  1  &  1  &   1   \\  \hline \hline 
 \scriptstyle{h^{i,j}} &  \scriptstyle{ j-i=0 }       &     \scriptstyle{ j-i=2 }       &   \scriptstyle{ j-i=4 }  &  \scriptstyle{ j-i=6 }  
 \\ \hline 
\end{array}  
\end{gather}
where  
\[
h^{i,j} = h^i(\wedge^j T\Nt_P):= \mathrm{dim}(\mH^i(\Nt_P, \wedge^j T\Nt_P)^{-i-j}).
\]
Thus the dimension of the block of the center is $10$.

\paragraph{The blocks corresponding to a Grassmannian.} Let $P=P_6$. In this case, $X\cong \mathrm{Gr}(2,4)$.
The formal Hodge diamond (Definition \ref{def-hodge-diamond}) for $\Nt_P$ has the following bigraded dimension table.
\begin{gather}  \label{table-hodge-gr24} 
\begin{array}{|c||c|c|c|c|c|} \hline 
         \scriptstyle{ j+i=0 } & 1   &                         &                         &            &              \\  \hline 
              \scriptstyle{ j+i=2 } & 1   &   1                 &                         &           &               \\  \hline 
               \scriptstyle{ j+i =4 }   & 2   &  2   &         1 &        &                  \\  \hline 
               \scriptstyle{j + i = 6} &  1  & 2   & 2    & 1  &      \\ \hline 
             \scriptstyle{ j+i=8 }    &  1   &  1  &  2  &   1  & 1   \\  \hline \hline 
 \scriptstyle{h^{i,j}} &  \scriptstyle{ j-i=0 }       &     \scriptstyle{ j-i=2 }       &   \scriptstyle{ j-i=4 } &  \scriptstyle{ j-i=6 }   &  \scriptstyle{ j-i=8 }   \\ \hline 
\end{array}  
\end{gather}
It follows that dimension of the block of the center is $20$. 

The verification of this table may be carried out easily via machine computation based on Corollary \ref{cor-wedge-P-structure} and Proposition \ref{rel_lie_BGG} as before. However, in this particular case, to illustrate our method, we provide an explicit computation which is presented in Appendix \ref{sub_grassmannian}.

\paragraph{The blocks corresponding to two-step partial flag varieties.} Now consider $P=P_1 $, $ P_2$ and $P_3$.  Evidently, in this case $X$ is identified with the partial flag varieties 
$$
G/P_1\cong\op{Fl}(2,3,4) \cong G/P_3\cong \op{Fl}(1,2,4),\quad
\quad G/P_2\cong \op{Fl}(1,3,4).
$$ 
The three parabolic subgroups correspond, in fact, to equivalent blocks of $\ul_q(\s_4)$. Indeed, a $\widetilde{W}_\Pg \cdot$-orbit 
containing a weight $\lambda_1 \in \bar{\Cg}_{\rm sing}$ such that $\lambda_1$ is stabilized by the simple reflection $s_1 \in W$, 
also contains weights $\lambda_2 \in \bar{\Cg}_{\rm sing}$ and $\lambda_3 \in \bar{\Cg}_{\rm sing}$ that are stabilized, respectively, 
by the simple reflections $s_2$ and $s_3 \in W$ and vice versa. Therefore, the block in question can be labeled equivalently as $\ul^{\lambda_1}$, 
$\ul^{\lambda_2}$, or $\ul^{\lambda_3}$, and to compute its structure we can choose any of the flag varieties 
$\op{Fl}(2,3,4)$, $\op{Fl}(1,3,4)$, or $\op{Fl}(1,2,4)$. This is confirmed by the computation: the entries of the formal Hodge diamond are the same independently of the parabolic type. 

To compute the entries we use the method given in Corollary \ref{cor-wedge-P-structure} and 
Proposition \ref{rel_lie_BGG} programmed using python. Note that Lemma \ref{lem-reduce-to-flag} allows us to use the maps $d^*_j$ computed in \cite{LQ1}, even if we have to compute cohomology over $G/P$. 
 
We tabulate the result of the computation below, performed by a machine computation.
  \begin{gather}  \label{table-hodge-part-flags}
\begin{array}{|c||c|c|c|c|c|c|} \hline 
         \scriptstyle{ j+i=0 } & 1 &                         &                         &           &  &                \\  \hline 
              \scriptstyle{ j+i=2 }   & 2   &   1                 &                         &          &  &                 \\  \hline 
               \scriptstyle{ j+i =4 } & 3   &   3  &  1    &                     &  &      \\  \hline
             \scriptstyle{ j+i=6 }    &  3   &  5  &  3  &   1   &  &  \\  \hline  
               \scriptstyle{ j+i=8 }  &  2   &  4  &  5  &   3   & 1  &  \\  \hline  
               \scriptstyle{ j+i=10 } &  1  &  2  &  3  &   3   & 2 &  1 \\  \hline    \hline         
 \scriptstyle{h^{i,j}} &  \scriptstyle{ j-i=0 }       &     \scriptstyle{ j-i=2 }       &   \scriptstyle{ j-i=4 }  &  \scriptstyle{ j-i=6 }  
 &  \scriptstyle{ j-i=8 }   &  \scriptstyle{ j-i=10 }   \\ \hline 
\end{array}  
\end{gather}
The dimension of the center of such a block is hence $50$. 

A discussion providing a categorical interpretation of these results is formulated in Section \ref{subsec-conj}.    

\paragraph{Regular blocks.}
A regular block corresponds to the Borel subgroup $P_0=B$ and its structure has been described in \cite[Theorem 4.11]{LQ1}. The dimension of the block is 125.
\begin{equation}\label{table-Hodge-sl4}
\begin{gathered}
\begin{array}{|c||c|c|c|c|c|c|c|} 
\hline
 {\scriptstyle i+j=0}        & 1          &            &            &            &            &            & \\ \hline 
 {\scriptstyle i+j=2}        & 3          &  1         &            &            &            &            & \\ \hline 
 {\scriptstyle i+j=4}        & 5          &  4         &  1         &            &            &            & \\ \hline
 {\scriptstyle i+j=6}        & 6          &  9         &  4         &  1         &            &            & \\ \hline 
 {\scriptstyle i+j=8}        & 5          &  11        & 9          &  4         &  1         &            & \\ \hline
 {\scriptstyle i+j=10}       & 3          &  8         & 11         &  9         &  4         &  1         & \\ \hline 
 {\scriptstyle i+j=12}       & 1          &  3         & 5          &  6         &  5         &  3         &  1 \\ \hline \hline
 {\scriptstyle h^{i,j}} & {\scriptstyle j-i=0} & {\scriptstyle j-i=2} & {\scriptstyle j-i=4} & {\scriptstyle j-i=6} & {\scriptstyle j-i=8} & {\scriptstyle j-i=10} & {\scriptstyle j-i=12} \\  \hline
\end{array}  
\end{gathered} 
\end{equation}

\subsection{Conjectures and comments } \label{subsec-conj}
In this section, we formulate two conjectures concerning the structure and dimension of the center of small quantum group $\ul_q(\mathfrak{sl}_n)$, and summarize the computational implications of the previous subsection. 

Let $n$ be a natural number greater or equal to $2$. For this section, we will assume that $q$ is a primitive $l$th root of unity, where $l$ is a prime number which is greater than $n$.

\paragraph{The trivial-representation conjecture.} We first point out the following observation. Since $\Nt_P$ is a $G$-variety and the bundle of polyvector fields $\wedge^\star T\Nt_P$ is $G$-equivariant, the $\C^*$-isotypic components of the Hochschild cohomology groups carry natural $G$-actions.

When $G=SL_n(\C)$ ($n\leq 4$), the computations performed in the previous section shows that the $SL_n(\C)$ action on the zeroth Hochschild cohomology group is always trivial. One is thus naturally led to expect the following.

\begin{conjecture}\label{conj-trivial-rep}
At a primitive $l$th root of unity $q$, the center $\zl(\mathfrak{sl}_n)$ of the small quantum group $\ul_q(\mathfrak{sl}_n)$ carries a natural $SL_n(\C)$ action, whose isotypic components consist only of trivial $SL_n(\C)$-representations.
\end{conjecture}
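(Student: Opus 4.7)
The plan is to verify the conjecture block by block, using the geometric description of Corollary \ref{BeLa_sing} to translate the statement into a cohomological one, and then to extract the $G$-isotypic components via the Bott/BGG machinery developed in Section \ref{sec-equivariant-structure}. Since $\zl(\ul_q(\s_n))=\bigoplus_\lambda \zl^\lambda$, with $\lambda$ running over a set of representatives of $\widetilde{W}_\Pg\cdot$-orbits in $\Pg_l$, it suffices to prove that each $\zl^\lambda$ consists only of trivial $SL_n(\C)$-isotypic components under the natural $G$-action induced from the geometric action on $\Nt_{P_\lambda}=T^*(G/P_\lambda)$.

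Fix such a $\lambda$ with associated parabolic $P=P_\lambda$ and $X=G/P$. Since $\pr:\Nt_P\to X$ is affine, the pushforward functor is exact, so as $G$-representations
\[
\mH^i(\Nt_P,\wedge^j T\Nt_P)^k\cong \mH^i(X,\pr_*(\wedge^j T\Nt_P)^k),
\]
and by Corollary \ref{cor-wedge-P-structure} the right-hand sheaf is isomorphic to $G\times_P V_j^k$ where $V_j^k$ is the appropriate graded piece of $V_\star$. The conjecture amounts to showing $\op{Hom}_G(L_\nu,\mH^i(X,G\times_P V_j^k))=0$ for every nonzero dominant $\nu$ and every triple $(i,j,k)$ with $i+j+k=0$. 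Applying Lemma \ref{lem-reduce-to-flag} to replace $G/P$ by $G/B$, then Theorem \ref{Bott_rel_lie} and Proposition \ref{rel_lie_BGG}, this Hom-space is computed by the BGG complex \eqref{BGG_complex} on the $w\cdot\nu$ weight spaces of $V_j^k$; one would therefore have to establish acyclicity of that complex for every nonzero $\nu$ and every $(i,j,k)$.

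A significant reduction is furnished by the $\mathfrak{sl}_2$-action of Theorem \ref{thm-sl2-action}: both the Poisson bivector $\tau$ and the symplectic form $\omega$ are $G$-invariant sections (the Hamiltonian $G$-action preserves the canonical symplectic structure on $T^*X$), so the $\mathfrak{sl}_2$ acts by $G$-equivariant maps, and the isotypic decomposition is preserved under wedging with $\tau$ and contraction with $\omega$. Combined with Theorem \ref{thm-diagonal-ones}, which already shows that the leftmost column of the formal Hodge diamond consists of trivial $G$-representations $L_0$, the $\mathfrak{sl}_2$-symmetry propagates triviality along each $\mathfrak{sl}_2$-orbit and in particular forces the bottom row of the diamond to be trivial as well. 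This handles two ``boundary rays'' of every Hodge diamond uniformly.

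The main obstacle is then the \emph{interior vanishing}, that is, the triviality of those $(i,j)$-entries that are neither on the first column nor on the bottom row. A direct case-by-case BGG computation becomes combinatorially unwieldy as $P$ and $(i,j)$ vary, so a structural input is needed. One promising route is to exploit a singular-to-regular translation functor \`a la Beilinson-Ginzburg-Soergel, identifying $\zl^\lambda$ with a natural quotient or subalgebra of the regular block center $\zl^0$ of a Levi subgroup, thereby reducing the statement to the regular case of smaller rank where one can induct. An independent route is to construct the $SL_n(\C)$-action on $\zl(\ul_q(\s_n))$ intrinsically, for instance through a quantum Frobenius splitting compatible with the Harish-Chandra center, and to show a priori that every central element is $G$-invariant by exhibiting it as a push-forward of a $G$-invariant class on $\Nt_P$. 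Either route would still have to dovetail with the $\mathfrak{sl}_2$-symmetry and BGG vanishing above in order to account for the full bigraded structure.
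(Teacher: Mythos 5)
The statement you are asked to prove is stated as a \emph{conjecture} in the paper (Conjecture~\ref{conj-trivial-rep}), not as a theorem: the authors offer computational evidence only for $n\leq 4$ (the Hodge-diamond tables of Section~\ref{sec-sing-blocks-A3}, equations \eqref{table-hodge-pr3}, \eqref{table-hodge-gr24}, \eqref{table-hodge-part-flags}, \eqref{table-Hodge-sl4}) together with the $B_2$ counterexample in Appendix~\ref{app-b2} showing the failure outside type $A$. There is no proof in the paper to compare your proposal against, and your proposal does not close the gap either, so the conjecture remains open.

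That said, your framework is sound and correctly reproduces the reductions that the paper makes available. Passing from $\zl(\ul_q(\s_n))$ to a single block $\zl^\lambda$, then using Corollary~\ref{BeLa_sing}, Corollary~\ref{cor-wedge-P-structure}, Lemma~\ref{lem-reduce-to-flag}, Theorem~\ref{Bott_rel_lie} and Proposition~\ref{rel_lie_BGG} to recast the question as a BGG acyclicity statement for all nonzero dominant $\nu$ is exactly the right translation. Your use of Theorem~\ref{thm-sl2-action} to propagate triviality along $\mathfrak{sl}_2$-orbits is also correct, since $\tau$ and $\omega$ are $G$-invariant. One small misattribution: Theorem~\ref{thm-diagonal-ones} establishes triviality of the \emph{main diagonal} entries $\mH^0(\Nt_P,\wedge^{2r}T\Nt_P)^{-2r}$, not of the leftmost column. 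The leftmost column $\mH^k(\Nt_P,\wedge^{k}T\Nt_P)^{-2k}\cong \mH^{2k}(X,\C)$ is trivial by Borel's theorem on compact homogeneous K\"{a}hler manifolds (the discussion around equation~\eqref{eqn-coinvariant-subalg}). Together with the $\Z_2$-symmetry this covers the boundary rays, so your conclusion stands but the citation should be corrected.

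The genuine gap is the one you yourself flag: the interior entries of the formal Hodge diamond, i.e.\ the Hom-spaces $\op{Hom}_G(L_\nu,\mH^i(\Nt_P,\wedge^j T\Nt_P)^{-i-j})$ for $\nu\neq 0$ when $(i,j)$ is neither on the $C_P$-column, the bottom row, nor the $\mH^0$-diagonal. No uniform vanishing mechanism is established in the paper for these entries, and the $B_2$ example~\eqref{eqn-B2-interesting-term} shows that BGG acyclicity is \emph{not} automatic: the nontrivial summand $L_{\alpha_1+\alpha_2}$ appears precisely in an interior slot. Any proof must therefore invoke something specific to type $A$, and neither of the two routes you sketch (singular translation functors \`a la Beilinson--Ginzburg--Soergel reducing to a Levi, or a quantum Frobenius/Harish-Chandra argument forcing $G$-invariance a priori) is carried out or even hinted at in the paper. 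As written, both are speculation, and until one of them (or an alternative) is actually executed, the argument is incomplete; your proposal should be regarded as a plausible strategy rather than a proof.
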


The conjecture generalizes the one formulated in \cite[Conjecture 4.7]{LQ1} for regular blocks of $\ul_q(\mathfrak{sl}_n)$. In contrast, the statement fails outside of type $A$, as the simplest instance in $B_2$ indicates (see Appendix \ref{app-b2}).

\paragraph{The dimension conjecture.}
Analyzing the results established in the previous subsections, one is naturally led to make the following conjecture.
\begin{conjecture} \label{conj-main}
Let $l$ be a prime number greater than $n+1$. 
At a primitive $l$th root of unity $q$, the dimension of the center of the small quantum group $\ul_q(\mathfrak{sl}_n)$ is given by the rational Catalan number 
\[
 c_{(n+1)l -n,\, n}  = \frac{1}{(n+1)l} { (n+1)l \choose n}~.
\] 
\end{conjecture}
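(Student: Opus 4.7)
The plan is to verify the conjecture by decomposing the center block by block and matching the total against the Catalan number. By the quantum linkage principle, $Z(\ul_q(\mathfrak{sl}_n)) = \bigoplus_{[\lambda]} \zl^{\lambda}$ where $[\lambda]$ ranges over $\widetilde{W}_\Pg$-orbits on $\Pg_l$; by Corollary \ref{BeLa_sing} the dimension of each $\zl^{\lambda}$ depends only on the conjugacy class of the stabilizer parabolic $P = P_\lambda$. Writing $d_P := \dim \zl_P$ for this common block dimension and $N_P(l)$ for the number of orbits with stabilizer of type $P$, the target identity becomes
\begin{equation*}
\sum_{[P]} N_P(l)\, d_P \;=\; \frac{1}{(n+1)l}\binom{(n+1)l}{n}.
\end{equation*}

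The first task is to determine $N_P(l)$. In type $A_{n-1}$ conjugacy classes of parabolics are indexed by subsets $S \subseteq \Pi$, equivalently by compositions of $n$, and singular weights whose stabilizer is generated by $S$ are the integer points in the relative interior of the face of $\bar{\Cg}$ carved out by $S$. The action of $l\Pg$ together with the finite Weyl group, combined with the hypothesis that $l$ is prime and greater than $n+1$ (so $\gcd(l, n+1) = 1$, forcing $\Pg/\Qg$ to act freely on non-Steinberg orbits), gives a clean count polynomial in $l$ of degree $n-1-|S|$. The second task is to compute each $d_P$ using the machinery of Section \ref{sec-parabolic-Springer}: Corollary \ref{cor-wedge-P-structure} presents $\mathrm{pr}_*(\wedge^\star T\Nt_P)$ as an explicit $G\times \C^*$-equivariant bundle on $G/P$, and Lemma \ref{lem-reduce-to-flag} together with Theorem \ref{Bott_rel_lie} and Proposition \ref{rel_lie_BGG} reduces its sheaf cohomology to a finite linear-algebra problem in the BGG complex for $\mathfrak{sl}_n$. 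Both Corollary \ref{cor-algebra-structure} (handling all $P$ with $G/P \cong \PS^n$) and the case-by-case computations of Section \ref{sec-sing-blocks-A3} for $n \leq 4$ confirm this framework.

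The third step---and the main obstacle---is the combinatorial identity itself. The rational Catalan number $c_{(n+1)l-n,\,n}$ enumerates $(n,(n+1)l-n)$-Dyck paths and is the Frobenius dimension of the finite-dimensional simple representation of the rational Cherednik algebra of $S_n$ at slope $(n+1)l/n$, so one would seek a parabolic-type statistic on such paths whose level sets have cardinalities $N_P(l) \cdot d_P$. The hardest part is producing a uniform closed form for $d_P$ across all parabolic types, since our cohomological methods so far only resolve the projective-space and low-rank cases explicitly, and the regular block itself is still conjecturally tied to Haiman's diagonal coinvariants. A promising conceptual route would be to realize $Z(\ul_q(\mathfrak{sl}_n))$ as the cohomology of a suitable affine Springer fiber or compactified Jacobian where rational Catalan numbers arise naturally, and then match the resulting stratification by fixed-point type with the block decomposition via $\Nt_P$; such an identification would simultaneously produce a uniform formula for $d_P$ and the combinatorial identity above.
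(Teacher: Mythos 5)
What you are attempting to prove is Conjecture~\ref{conj-main}, and the paper itself does not prove it: the authors state it as a conjecture and verify it by direct computation only for $\mathfrak{sl}_2$, $\mathfrak{sl}_3$ and $\mathfrak{sl}_4$ in Section~\ref{subsec-conj}. Your outline is, in substance, the verification scheme the paper uses for these low-rank cases, lifted to general $n$: decompose the center into blocks via the linkage principle, use the geometric machinery of Section~\ref{sec-parabolic-Springer} to compute each block's dimension $d_P$, count the $\widetilde{W}_\Pg$-orbits of each type, and match the weighted sum against $c_{(n+1)l-n,\,n}$. Proposition~\ref{prop_number_blocks} supplies the regular-orbit count $c_{l-n,n}=\frac{1}{l}\binom{l}{n}$ via the parking-function bijection of \cite{GMV}, and the $\mathfrak{sl}_4$ arithmetic in Section~\ref{subsec-conj} carries out the singular-orbit counts by inspecting faces of the closed fundamental $l$-alcove. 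You correctly and candidly identify that this program does not close to a proof: there is no uniform closed form for $d_P$, the regular block dimension is itself only conjecturally the dimension of a diagonal coinvariant algebra (established in \cite{LQ1} only for $n\le 4$), and the combinatorial identity $\sum_{[P]} N_P(l)\,d_P = c_{(n+1)l-n,\,n}$ is not proven. Those are precisely the open problems; your Cherednik/compactified-Jacobian suggestion is a plausible route to the missing uniformity but is not carried out here or in the paper.

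One technical point in your first paragraph should be sharpened. Corollary~\ref{BeLa_sing} gives $\zl^\lambda\cong\mathrm{HH}^0_{\C^*}(\Nt_{P_\lambda})$, so $\dim\zl^\lambda$ is indeed a $G$-conjugacy-class invariant of $P_\lambda$. But a single $\widetilde{W}_\Pg$-orbit can contain singular weights $\lambda,\mu\in\bar{\Cg}_{\mathrm{sing}}$ whose stabilizer parabolics $P_\lambda$, $P_\mu$ are \emph{not} $G$-conjugate --- the two-step partial flag varieties of $\mathfrak{sl}_4$ (where $P_1$, $P_2$, $P_3$ all label the same block even though $G/P_2$ is not $G$-isomorphic to $G/P_1\cong G/P_3$) illustrate exactly this. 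Thus ``$N_P(l)$ = number of orbits with stabilizer of type $P$'' is not well-defined without a choice of representative per orbit, and the equality $\dim\mathrm{HH}^0_{\C^*}(\Nt_{P_\lambda})=\dim\mathrm{HH}^0_{\C^*}(\Nt_{P_\mu})$ across such a pair is not an output of Corollary~\ref{BeLa_sing} alone, but of the derived equivalence recorded in Corollary~\ref{cor-derived-coh-equivalence}, which in turn rests on the blocks being literally equal. The sum should therefore be organized over $\widetilde{W}_\Pg$-conjugacy classes of stabilizer subgroups of $W$ (a strictly coarser equivalence than $G$-conjugacy of the parabolics), as the paper's bookkeeping for $\mathfrak{sl}_4$ implicitly does.
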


The conjecture is verified for $\s_n$ for $n \leq 4$. To show this, we will need the following result, which holds for any $\s_n$. 

\begin{proposition}   \label{prop_number_blocks}
The number of regular blocks in $\ul_q(\mathfrak{sl}_n)$ at an $l$-th primitive root of unity is given by the 
rational Catalan number 
\[ c_{l-n, \,n}   = \frac{1}{l} { l \choose n}  , \] 
where $l$ is on odd integer greater than or equal to $n+1$ and coprime to $n$. 
\end{proposition}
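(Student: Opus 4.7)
The plan is to parametrize regular blocks by $\widetilde{W}_\Pg$-orbits of $W\cdot$-regular weights in $\Pg_l$ and then translate the count into a combinatorial problem on the fundamental alcove. By the quantum linkage principle \cite{APW, APW2}, blocks of $\ul_q(\s_n)$ are parametrized by $\widetilde{W}_\Pg$-orbits on $\Pg_l$, and a block is regular precisely when the corresponding orbit has trivial stabilizer under the dot action of $\widetilde{W}_\Pg$.

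Set $W_a := W \ltimes l\Qg$, the affine Weyl subgroup of $\widetilde{W}_\Pg$. Since $\bar{\Cg}$ is a strict fundamental domain for the dot action of $W_a$ on $\Pg$, each $W_a$-orbit meets $\bar{\Cg}$ in exactly one point, and the $W_a$-orbits with trivial stabilizer correspond bijectively to the interior integral points of $\bar{\Cg}$. For $\g = \s_n$, setting $m_i := \langle \mu + \rho, \check\alpha_i\rangle$ for $i = 1,\dots,n-1$ and $m_0 := l - \sum_{i=1}^{n-1} m_i$ (the value of $\langle\mu+\rho,\check\alpha\rangle$ against the highest coroot, which in type $A_{n-1}$ is $\check\alpha_1+\dots+\check\alpha_{n-1}$), the interior integral points are parametrized by tuples $(m_0, m_1, \dots, m_{n-1})$ of positive integers with $m_0 + m_1 + \dots + m_{n-1} = l$. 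A standard stars-and-bars count gives $\binom{l-1}{n-1}$ such tuples.

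Next, I would invoke the short exact sequence $1 \to W_a \to \widetilde{W}_\Pg \to \Omega \to 1$, with $\Omega := \widetilde{W}_\Pg/W_a \cong \Pg/\Qg \cong \Z/n\Z$ in type $A_{n-1}$. The group $\Omega$ inherits a dot action on $\bar{\Cg}$ by the symmetries of the extended Dynkin diagram coming from $\Pg/\Qg$; in the coordinates $(m_0, \dots, m_{n-1})$ this is the cyclic shift $(m_0, m_1, \dots, m_{n-1}) \mapsto (m_{n-1}, m_0, \dots, m_{n-2})$. Regular $\widetilde{W}_\Pg$-orbits on $\Pg_l$ are then in bijection with $\Omega$-orbits on the interior integral points of $\bar{\Cg}$, \emph{provided} $\Omega$ acts freely on this set.

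The main technical step is to verify this freeness under the coprimality hypothesis $\gcd(l,n) = 1$. Suppose a nontrivial shift by $d \in \{1,\dots,n-1\}$ fixes a tuple $(m_0,\dots,m_{n-1})$; then the tuple has period $e := \gcd(d,n) < n$, so $(n/e)(m_0 + \dots + m_{e-1}) = l$, forcing $n \mid l\cdot e$. The hypothesis $\gcd(l,n) = 1$ then forces $n \mid e$, contradicting $e < n$. Hence $\Omega$ acts freely on interior integral points of $\bar{\Cg}$, and the number of regular blocks equals
\[
\frac{1}{n}\binom{l-1}{n-1} \;=\; \frac{1}{l}\binom{l}{n} \;=\; c_{l-n,\,n},
\]
as claimed.
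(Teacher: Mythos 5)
Your proof is correct, and it takes a genuinely different route from the paper. After the common reduction to counting regular $\widetilde{W}_\Pg$-orbits on $\Pg_l$, the paper identifies regular $\widetilde{W}_\Pg$-orbits with \emph{root lattice} points $\Cg\cap\Qg$ inside the open alcove (a step that silently uses $\gcd(l,n)=1$ so that every regular $\widetilde{W}_\Pg$-orbit meets $\Qg$ and meets $\Cg\cap\Qg$ exactly once), recognizes $\Cg$ as the $(l-n)$-dilated fundamental alcove, and then imports the count $|\Cg\cap\Qg|=c_{l-n,n}$ from the parking-function result of \cite{GMV}. You instead stay with \emph{all} interior integral points of $\bar{\Cg}$, count them directly by stars-and-bars as $\binom{l-1}{n-1}$, and then quotient by the extension group $\Omega=\widetilde{W}_\Pg/W_a\cong\Z/n$ acting by cyclic rotation of the barycentric coordinates $(m_0,\dots,m_{n-1})$. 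The freeness of this $\Omega$-action under $\gcd(l,n)=1$ is exactly where you pay the price that the paper pays when invoking $\gcd(l,n)=1$, and your verification of it is correct (a nontrivial rotation fixing a tuple forces $n/\gcd(d,n)$ to divide $l$, hence to equal $1$). Your approach has the advantage of being self-contained and elementary -- no external combinatorial citation -- and it gives a transparent ``orbit-counting'' explanation of the rational Catalan number $\frac{1}{n}\binom{l-1}{n-1}=\frac{1}{l}\binom{l}{n}$; the paper's route, by contrast, makes explicit the connection with rational parking functions, which is thematically relevant to the surrounding conjectures in the paper.
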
 

\begin{proof}  Consider the integral weights in the (open) fundamental $l$-alcove,  
\[ 
\Cg = \{ \mu \in \Pg^+ : \langle \mu + \rho, \check{\alpha} \rangle < l , \;\;\; \forall \alpha \in \Phi^+ \} 
\]  
This set is in bijection with the regular orbits of the affine Weyl group $\widehat{W}_\Qg = W \ltimes l\Qg$ on $\Pg$. 
Then the root weights $ \Cg \cap \Qg$ inside $\Cg$ are in bijection with the regular orbits of the extended affine Weyl group $\widetilde{W}_\Pg$.  
To find the cardinality of the set $|\Cg \cap \Qg|$, we recall that, in type $A_{n-1}$, 
the set $\Cg$ can be described also as follows: 
\[ 
\Cg = \{ \mu \in \Pg^+ : \langle \mu + \rho, \check{\theta} \rangle < l \} ,
\] 
where $\theta \in \Phi^+$ is the maximal root. For type $A_{n-1}$ we have 
$\langle \rho, \check{\theta} \rangle = n-1$, and therefore $\Cg = \{ \mu \in \Pg^+ : \langle \mu, \check{\theta} \rangle \leq l-n \}$ is the $(l-n)$-dilated fundamental $1$-alcove $\Cg_1 = \{ \mu \in \Pg^+ : \langle \mu, \check{\theta} \rangle \leq 1 \}$. 
Then by a result in \cite{GMV}, the  number of the root weights in $\Cg$ is equal to the number 
of the increasing $(l-n)/n$ parking functions, and this number is known to be given by the rational Catalan number 
$c_{l-n,\, n}$  \cite{GMV}. 
 \end{proof}

\paragraph{Case $\s_2$.} Let $l$ be an odd integer, greater than or equal to $3$. Then 
for $\s_2$ we have $(l-1)/2$ regular blocks with the dimension of the center equal to $3$, and one Steinberg block with $1$-dimensional center. The dimension of the entire center is 
\begin{equation}
{\op{ dim}}\, \zl (\s_2) =  3  \frac{l-1}{2}   + 1  = \frac{3l-1}{2} = c_{3l-2, \, 2} .
\end{equation}

\paragraph{Case $\s_3$.} Let  $l$ be a prime such that $l\geq 5$. Then by Proposition 
\ref{prop_number_blocks} for $\s_3$ we have $\frac{(l-1)(l-2)}{6}$ regular blocks of dimension $16$ each 
\cite{LQ1}. We also have $(l-1)$ singular $6$-dimensional blocks corresponding to the parabolic subalgebra $P \subset G$
such that $G/P \simeq \PS^2$ (see Corollary \ref{cor_proj_sp}), and one Steinberg block of dimension $1$. Therefore, 
\begin{equation}
 {\op{ dim}} \, \zl (\s_3) = 16 \frac{(l-1)(l-2)}{6} + 6 (l-1) + 1 = \frac{(4l-1)(4l-2)}{6} = c_{4l-3,\, 3} .
\end{equation}
This result agrees with the one obtained with the help of a machine computation done by an entirely different (straightforward) method in \cite{JackLa} when $l=5$. 

\paragraph{Case $\s_4$.} Let $l$ be a prime such that  $l \geq 7$. Then by Proposition \ref{prop_number_blocks} for $\s_4$ we have 
$\frac{(l-1)(l-2)(l-3)}{24}$ regular blocks of dimension $125$ each \cite{LQ1}. Let us determine the number of singular blocks 
of each kind. 

We will use the notations of the previous subsection. The blocks corresponding to any of the parabolic subalgebras 
$P_1\simeq P_3$ and $P_2$ are in bijection with the number of integral weights on the facets, 
but not on the edges of the closed first fundamental $l$-alcove $\bar{\Cg}$, which gives the number 
$\frac{(l-1)(l-2)}{2}$. The blocks 
corresponding to $P_4 \simeq P_5$ are in bijection with the integral weights on any of the two shorter edges, but not the 
vertices  of the tetrahedron $\bar{\Cg}$, a total of $l-1$ blocks. The blocks corresponding to $P_6$ are in bijection with 
half of the integral weights on the longer edge of the tetrahedron $\bar{\Cg}$, since there exists a transformation in  
$\widetilde{W}_\Pg$ that reflects this edge with respect to its middle. Therefore, the number of such blocks equals $\frac{l-1}{2}$. 
Finally, there is a one-dimensional Steinberg block. 

\begin{figure}[!htpb] 
\begin{center}
\caption{\small Three types of singular orbits for $\g= \s_4$, $l=7$}  \label{pic_orbits} 
\vspace*{1mm} 
\begin{tabular}{ccc} 
\psfig{figure=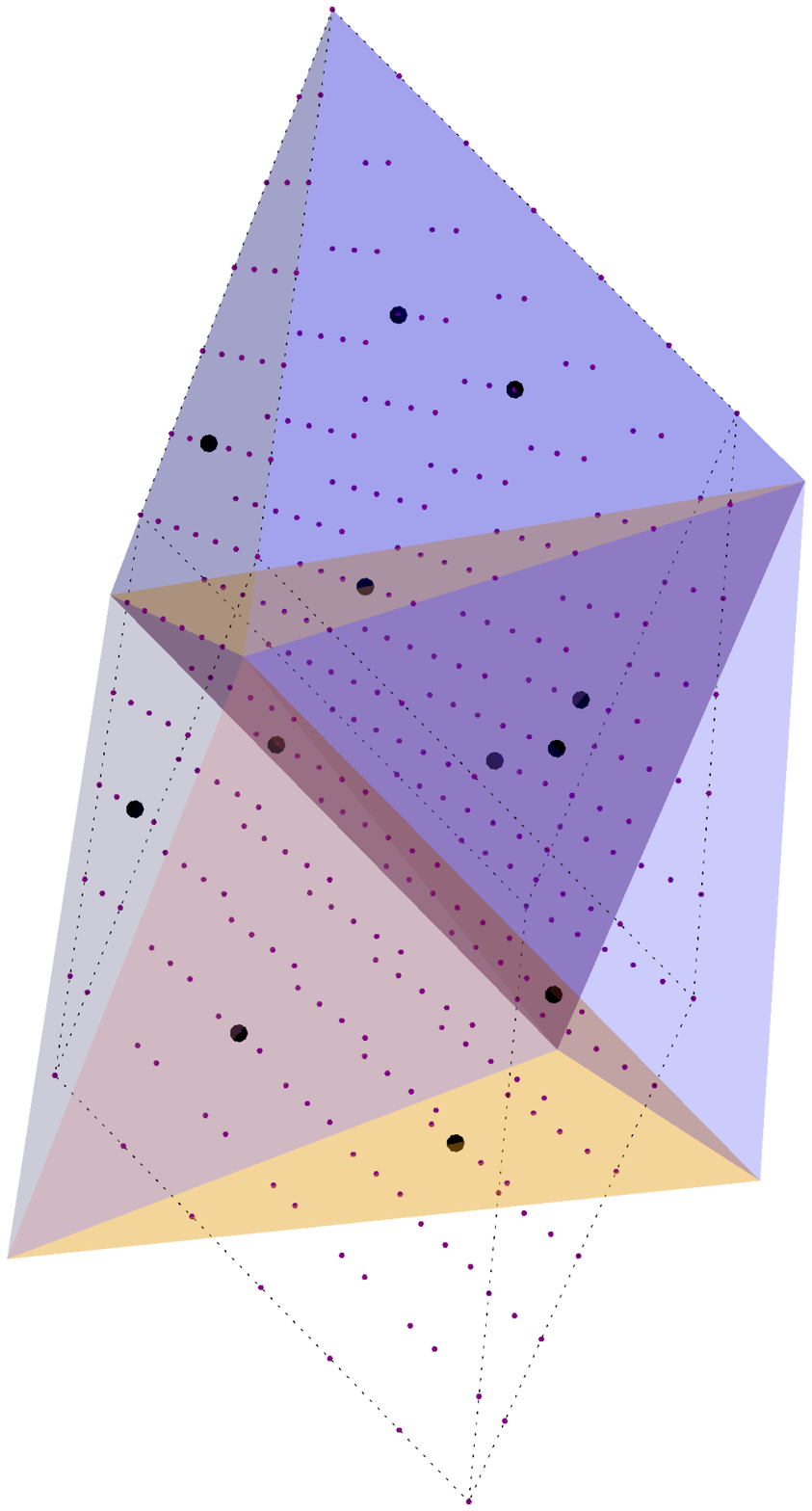, height=75mm}  
&  
\psfig{figure=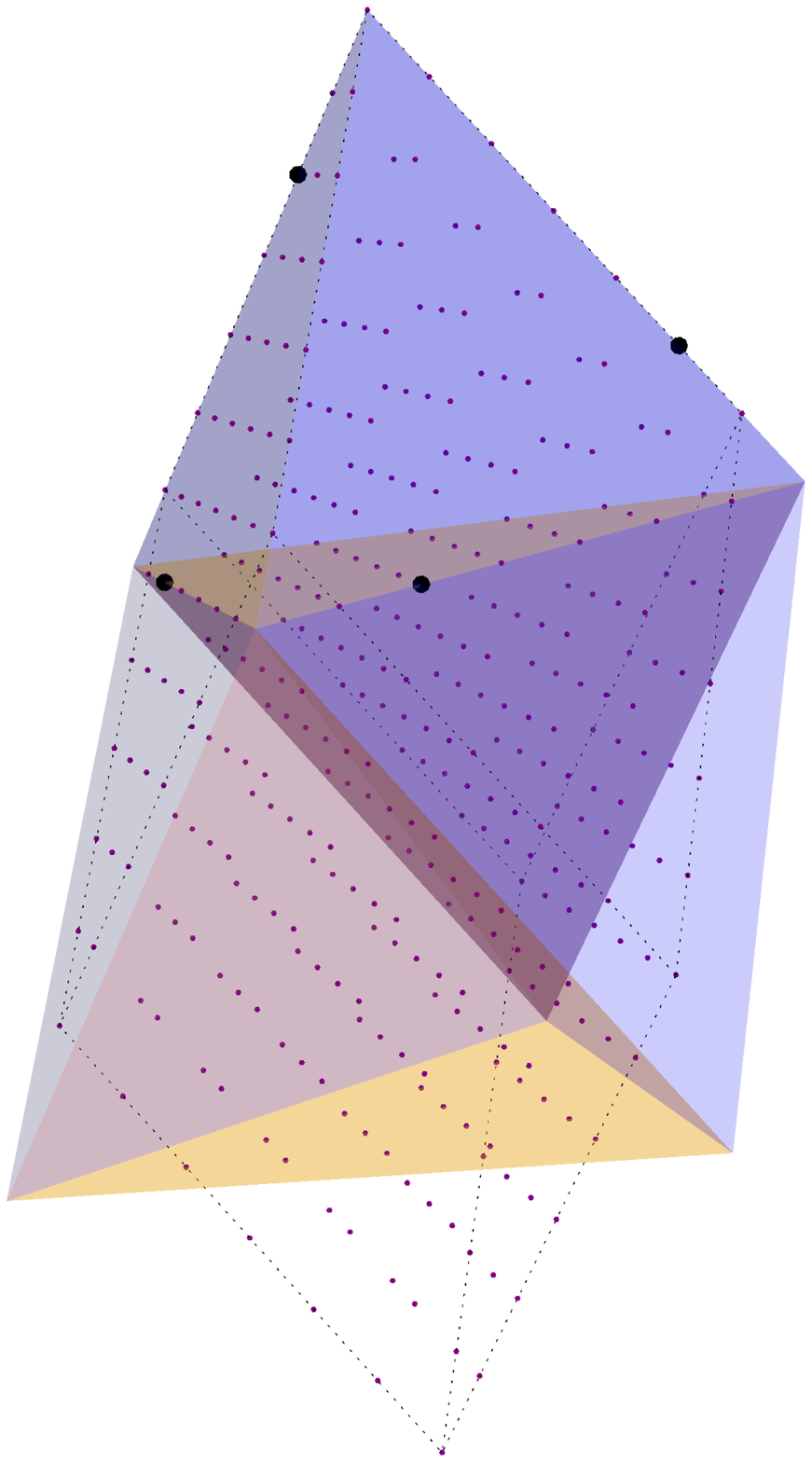, height = 75mm}
& 
\psfig{figure=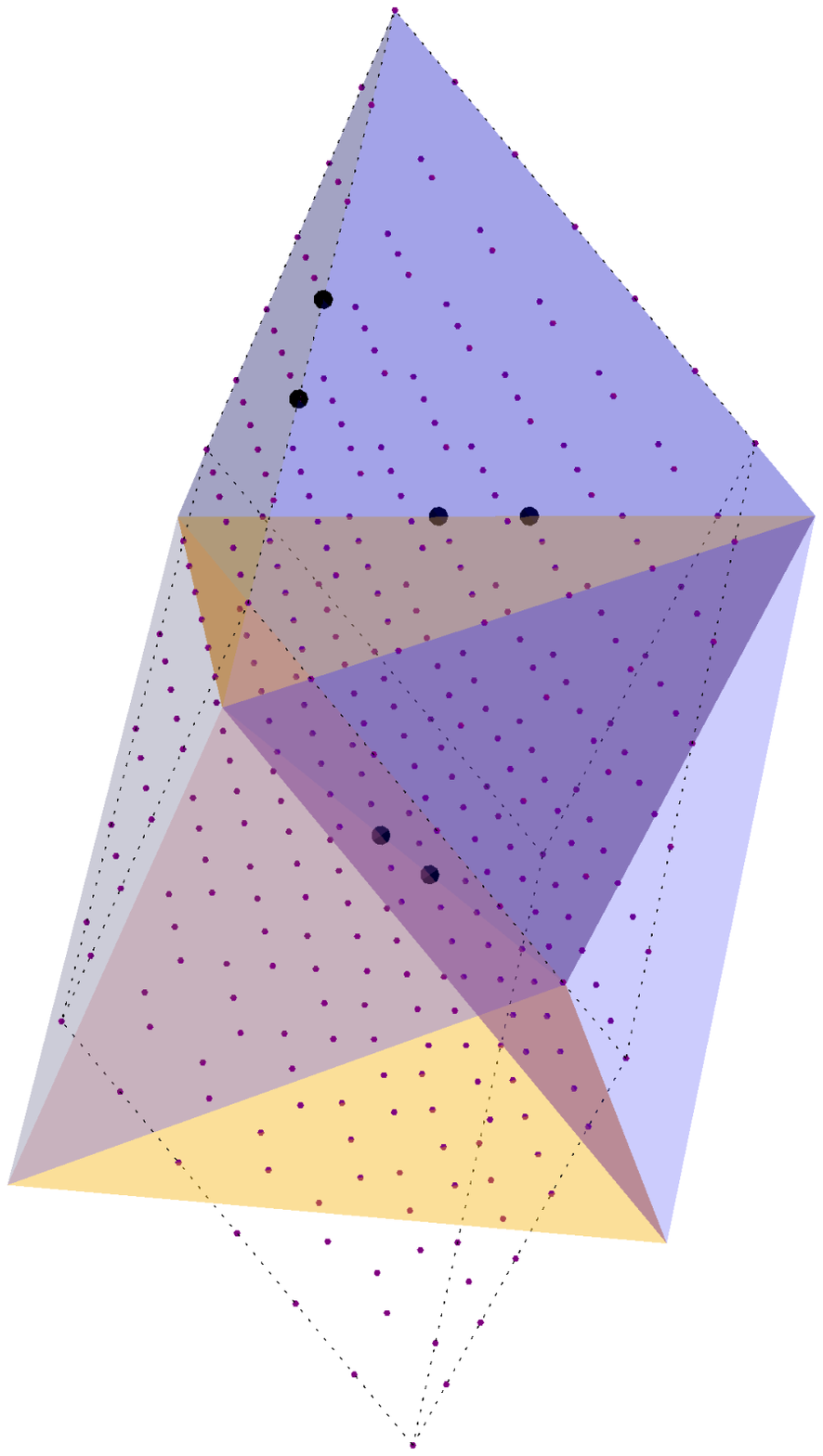, height = 75mm}  \\
{\small $P_1$-$P_2$-$P_3$ orbit} & {\small $P_4$-$P_5$ orbit} & {\small $P_6$ orbit}  
\end{tabular} 
\end{center}
\end{figure}

Figure \ref{pic_orbits} show the $\widetilde{W}_\Pg \cdot$-orbits of singular weights of different types in case $\g = \s_4$ and $l=7$. The small dots represent the set of all $l$-restricted dominant integral weights. We show only those faces of the alcoves 
that contain some of the $l$-restricted singular weights, for example, one can see that the lowest alcove contains only regular weights with respect to the $\widetilde{W}_\Pg \cdot$-action. 
The thick black dots represent the weights in one $\widetilde{W}_\Pg \cdot$-orbit: 12 weights on faces for the  orbit of the type $P_1-P_2-P_3$, 4 weights on short edges for the orbit of the type $P_4-P_5$, and 6 weights, two on each long edge for the orbit of the type $P_6$.

Taking into account the results on the dimensions of the singular blocks given in the previous section, we obtain
\begin{eqnarray}
{\op{dim}} \, \zl(\s_4) & \! = \! & 125 \frac{(l-1)(l-2)(l-3)}{24} + 50 \frac{(l-1)(l-2)}{2} + 20 \frac{l-1}{2}  + 10 (l-1) + 1  \nonumber \\
& \! =  \! & \frac{(5l-1)(5l-2)(5l-3)}{24} = c_{5l-4, \, 4} . 
\end{eqnarray}

\paragraph{A categorical implication.}
As we have observed in the case of two-step partial flag varieties, the center computations have resulted in a bigraded isomorphism of the formal Hodge diamonds 
 $$\mathrm{HH}^0_{\C^*}(T^*\mathrm{Fl}(2,3,4))\cong \mathrm{HH}_{\C^*}^0(T^*\mathrm{Fl}(1,3,4))\cong \mathrm{HH}_{\C^*}^0(T^*\mathrm{Fl}(1,2,4)).$$
While it is clear that, algebro-geometrically\footnote{The manifolds are diffeomorphic in the smooth category.}, the varieties $T^*\mathrm{Fl}(2,3,4)$, $T^*\mathrm{Fl}(1,3,4)$ are isomorphic, but neither is isomorphic to $T^*\mathrm{Fl}(1,2,4)$. These isomorphisms suggest that there might be a categorical equivalence of the corresponding varieties. 

Indeed, when a singular block of $\ul_q(\mathfrak{sl}_n)$ contains two weights $\lambda,\mu\in\overline{\Cg}_{\mathrm{sing}}$, then, even though the parabolic subgroups $W_\lambda$ and $W_\mu$ of $W$ fixing $\lambda$ and $\mu$ may be non-conjugate subgroups in $W$, they become conjugate to each other under the \emph{extended} affine Weyl group action $\widetilde{W}_\Pg$. Denote the parabolic subgroups determined by $W_\lambda$ and $W_\mu$ by $P_\lambda$ and $P_\mu$, and write $\Nt_\lambda:=T^*(G/P_\lambda)$ and $\Nt_\mu:=T^*(G/P_\mu)$ respectively. It follows from Theorem \ref{BeLa_cat} that we have the following equivalence of triangulated categories.

\begin{corollary}\label{cor-derived-coh-equivalence}
For $\ul_q(\mathfrak{sl}_n)$, let $\lambda$ and $\mu$ be singular weights in $\overline{\Cg}_{\mathrm{sing}}$ whose stabilizer groups under the $(-\rho)$-shifted finite Weyl group action are conjugate to each other in the extended affine Weyl group $\widetilde{W}_\Qg$. Then there is an equivalence of derived categories
\[
D^b(\mathrm{Coh}_{\C^*}(\Nt_\lambda))\cong D^b(\mathrm{Rep}(\ul)^\lambda)=D^b(\mathrm{Rep}(\ul)^\mu)\cong D^b(\mathrm{Coh}_{\C^*}(\Nt_\mu)).
\]
\hfill$\square$
\end{corollary}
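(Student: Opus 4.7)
The plan is to decompose the displayed chain of equivalences into three pieces. The two outer equivalences $D^b(\mathrm{Coh}_{\C^*}(\Nt_\lambda)) \cong D^b(\mathrm{Rep}(\ul)^\lambda)$ and $D^b(\mathrm{Rep}(\ul)^\mu) \cong D^b(\mathrm{Coh}_{\C^*}(\Nt_\mu))$ are free of charge: each is Theorem \ref{BeLa_cat} applied to the singular weight $\lambda$ (with parabolic $P_\lambda$) or $\mu$ (with parabolic $P_\mu$). All of the content thus concentrates in the middle equality
\[
D^b(\mathrm{Rep}(\ul)^\lambda) = D^b(\mathrm{Rep}(\ul)^\mu).
\]
By the quantum linkage principle recalled in Section \ref{sec-small-q-group}, this equality is equivalent to the assertion that $\lambda$ and $\mu$ label the same block of $\ul_q(\mathfrak{sl}_n)$, that is, that they lie in a common $\widetilde{W}_\Pg$-orbit on $\Pg_l$ under the dot action.

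To translate the hypothesis on stabilizers into this orbit statement, I would first pick $w \in \widetilde{W}_\Qg$ realising $w W_\lambda w^{-1} = W_\mu$, and set $\mu' := w \cdot \lambda$. Since $\widetilde{W}_\Qg \subset \widetilde{W}_\Pg$, the weight $\mu'$ automatically lies in the same $\widetilde{W}_\Pg$-orbit as $\lambda$; by the conjugation relation it has stabilizer $W_\mu$, so it sits on exactly the same singular wall configuration of $\bar{\Cg}$ as $\mu$. This reduces the corollary to the purely combinatorial problem of matching $\mu$ with $\mu'$ via an element of $\widetilde{W}_\Pg$.

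The last matching step is where the essential work lies, and is what I expect to be the main obstacle. I would exploit the type-$A_{n-1}$ identification $\widetilde{W}_\Pg / \widetilde{W}_\Qg \cong \Pg/\Qg \cong \Z/n\Z$, realised by the cyclic rotation of the affine Dynkin diagram. These diagram automorphisms act transitively on the vertices of $\bar{\Cg}$ and permute its walls in a controlled way, and in type $A$ they suffice to identify any two weights in $\bar{\Cg}_{\rm sing}$ whose finite-Weyl stabilizers belong to a single $W$-conjugacy class of standard parabolic subgroups, as already illustrated by the three parabolics $P_1, P_2, P_3$ of $SL_4(\C)$ in the discussion preceding the corollary. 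Once this combinatorial transitivity is verified in full generality, $\mu$ and $\mu'$ are forced to lie in a common $\widetilde{W}_\Pg$-orbit, hence so do $\lambda$ and $\mu$, and the derived equivalence follows.
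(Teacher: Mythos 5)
Your skeleton is the right one and matches the paper's: the two outer equivalences are exactly Theorem \ref{BeLa_cat} applied to $\lambda$ (with parabolic $P_\lambda$) and to $\mu$ (with parabolic $P_\mu$), and all the content lies in the middle equality $D^b(\mathrm{Rep}(\ul)^\lambda)=D^b(\mathrm{Rep}(\ul)^\mu)$, which is the statement that $\lambda$ and $\mu$ label the \emph{same} block of $\ul_q(\mathfrak{sl}_n)$.

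The problem is your last paragraph. You reduce to showing that two weights of $\bar{\Cg}_{\rm sing}$ with abstractly conjugate stabilizers lie in a common $\widetilde{W}_\Pg$-orbit, and then assert that the $\Z/n$ of affine Dynkin rotations ``suffice to identify any two weights in $\bar{\Cg}_{\rm sing}$ whose finite-Weyl stabilizers belong to a single $W$-conjugacy class of standard parabolics.'' That claim is false, and the paper itself provides the counterexample in Section \ref{sec-sing-blocks-A3}: for $\s_4$, every one of the $l-1$ interior integral weights on the long edge of $\bar{\Cg}$ has the \emph{identical} finite stabilizer $\langle s_1,s_3\rangle = W_{P_6}$, yet they split into $\tfrac{l-1}{2}$ distinct $\widetilde{W}_\Pg\cdot$-orbits (the only nontrivial identification being the reflection of the edge about its midpoint). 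So conjugacy of stabilizers simply does not force membership in a common block, and no amount of diagram-automorphism combinatorics will close this gap.

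What the paper actually intends, as spelled out in the paragraph immediately preceding the corollary (``when a singular block of $\ul_q(\mathfrak{sl}_n)$ contains two weights $\lambda,\mu\in\overline{\Cg}_{\mathrm{sing}}$ \ldots''), is that $\lambda$ and $\mu$ are already assumed to lie in a common $\widetilde{W}_\Pg\cdot$-orbit, i.e.\ to label the same block. The clause about the stabilizers being conjugate in the extended affine Weyl group is there to emphasize that this can happen even when $W_\lambda$ and $W_\mu$ are \emph{not} conjugate in $W$ (so that $P_\lambda$ and $P_\mu$ are genuinely different parabolics). Once ``same block'' is taken as the hypothesis, the middle equality is tautological and the corollary is the two-line consequence of Theorem \ref{BeLa_cat} that the $\square$ indicates. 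You should replace the attempted combinatorial transitivity argument by this observation, rather than treat it as something to be proved.
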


It remains an interesting question to realize the equivalence of the derived category of coherent sheaves by Fourier-Mukai kernels.

We point out a connection relating the derived equivalence of Corollary \eqref{cor-derived-coh-equivalence} to the work of Bezrukavnikov \cite{Bezr}, Cautis-Kamnitzer-Licata \cite{CautisKamnitzerLicataQuiver} and Riche-Williamson \cite{RW}, which will be carried out in subsequent works.

Consider the setup formulated in \cite[Conjecture 59]{Bezr}. Let $\mathbf{O}:=\C[[t]]$ be the ring of formal power series, a subring in the field $\mathbf{K}:=\C((t))$. If $P$ is a parabolic subgroup in $G$, we set $I_P\subset G_\mathbf{O}$ be the parahoric subgroup which is the preimage of $P$ under the natural evaluation map $G_\mathbf{O}\lra G, t\mapsto 0$. Let $\mathrm{Fl}_{P}:= G_\mathbf{K}/I_P$ be the partial affine flag variety. If $P$ equals the Borel $B$, we simply write $I:=I_B$ and $\mathrm{Fl}:=\mathrm{Fl}_B$. The category of $I$-constructible mixed partial Wittaker sheaves on $\mathrm{Fl}_P$ will be denoted by $D_I(\mathrm{Fl}_P)$. Furthermore, if $G$ is a complex reductive group, let $G^\vee$ stand for the Langlands dual group of $G$ and similarly for $P^\vee$. Then it is expected that there is a (mixed) equivalence of triangulated categories
\begin{equation}\label{eqn-Bezr-equ}
D_{I}(\mathrm{Fl}_{P^\vee})\cong D^b(\mathrm{Coh}_{G\times \C^*}\Nt_{P}).
\end{equation}

In type $A$, (a singular version of) the work of \cite{RW} can be deployed to give a combinatorial description of the left hand side of equation \ref{eqn-Bezr-equ}. Such a description of the left hand-side by singular Soergel bimodules should only depend on the imput of the affine Weyl group $\widehat{W}_\Qg$ of $\g$ and a finite parabolic subgroup $W_P$ (the Weyl group of the Levi subgroup of $P$). Denote the bounded homotopy category of singular Soergel bimodules by $D_{W_P}$ (singular Soergel bimodules form only an additive category). One thus has a (conjectural) equivalence
\begin{equation}
D_{W_P}\cong  D^b(\mathrm{Coh}_{G\times \C^*}\Nt_{P}).
\end{equation}

In particular, in type $A$, if $\lambda$ and $\mu$ are singular weights of $\ul_q(\mathfrak{sl}_n)$ whose stabilizer subgroups $ W_\lambda $ and $ W_\mu $ are conjugate to each other under the extended affine Weyl group action $\widetilde{W}_\Pg$ on $\widehat{W}_\Qg$ as Dynkin diagram automorphisms, then the categories $D_{W_\lambda}$ and $D_{W_\mu}$ are equivalent to each other. This in turn induces an equivalence of triangulated categories of $G\times \C^*$-equivariant coherent sheaves, and gives rise to an equivalence of blocks of Lusztig big quantum group blocks. Forgetting the $G$-equivariance, one obtains the equivalence of Corollary \ref{cor-derived-coh-equivalence} concerning the small quantum group block. 

For the simplest example, the cases of two-step flag varieties considered in Section \ref{sec-sing-blocks-A3} all have stabilizer group $S_2$ fixing one simple root of the $A_3$; these subgroups are all conjugate to each other under the cyclic group $\widetilde{W}_{\Pg}/\widehat{W}_{\Qg}\cong \Z/4$ action. This explains why the center computations should have resulted in the same result a priori.

On the other hand, in type $A$, the cotangent bundle of partial flag varieties are special examples of Nakajima quiver varieties \cite{Na1}. In \cite{CautisKamnitzerLicataQuiver}, the authors constructed a categorified quantum $\Ul_v(\mathfrak{sl}_n)$ (a generic $v$) action on a certain fixed collection of quiver varieties determined by a given highest weight of $\mathfrak{sl}_n$. There one obtains a derived equivalence between various $D^b(\mathrm{Coh}_{\C^*}\Nt_{\lambda})$'s realized by a categorical quantum Weyl group action. It is natural to expect that the categorical quantum group action is related to the categorical Hecke algebra action in (the singular version of) \cite{RW} via categorified Schur-Weyl duality.

\appendix

\section{Some computations}
The appendix contains some computational details. The first subsection \ref{sub_grassmannian} gives some of the computations supporting the case of $T^*\mathrm{Gr}(2,4)$ in Section \ref{sec-sing-blocks-A3}, while the second subsection \ref{app-b2} exhibits the difference outside of type $A$ in the simplest case of type $B_2$.

\subsection{A Grassmannian example}  \label{sub_grassmannian}
In this subsection, we give some details for the computation that establishes Table \ref{table-hodge-gr24}. This is the first singular block example in this paper that needs the explicit $P$-modules structure of the pushforward polyvector field sheaves $\mathrm{pr}_*(\wedge^\star T\Nt_P)$.

To help us compute the cohomology groups, we will resort to a version of Borel-Weil-Bott Theorem tailored for  Grassmannians. We refer the reader to \cite{Jantzen} for details, and a very readable survey can also be found in \cite{Chip}.

For now, let $X:= \mathrm{Gr}(k,N)$ be the Grassmannian of $k$-planes in $\C^N$ ($0\leq k\leq N$), and let $S$ be the tautological bundle of $X$, which by definition assigns to a point $[P]\in X$ the plane $P$ it represents. We have a short exact sequence of vector bundles on $X$:
\begin{equation}\label{eqn-tautological-sequence}
0\lra S \lra \underline{\C}^N\lra Q\lra 0,
\end{equation}
where $\underline{\C}^N$ stands for the trivial vector bundle on $X$. Then the tangent and cotangent bundle of $X$ can be identified with
\begin{equation}\label{eqn-tan-cotan-bundle-Gr24}
TX\cong \mathcal{H}om_{\Ox_X}(S,Q)\cong S^*\otimes Q,\quad \quad \Omega_X\cong TX^*\cong Q^* \otimes S.
\end{equation}

Recall that if $V$ is a vector space of dimension $l$, and $\lambda=(\lambda_1,\lambda_2,\dots,\lambda_l)$ is an $l$-part partition of an integer $m\in \Z$, i.e., $\lambda_i\geq \lambda_{i+1}$ for all $i$, and $\sum_{i=1}^l\lambda_i=m$, the \emph{Schur functor} on $V$ assigns to $\lambda$ the $SL(V)$ representation $\Sch_\lambda(V)$ whose highest weight equals $\lambda$. When applied fiberwise to vector bundles, Schur functors produce new vector bundles on manifolds.

Now we apply Schur functors to the bundles $S$ and $Q$ on $X$. If $\alpha=(\alpha_1,\dots, \alpha_{N-k})$ is an $(N-k)$-part partition and $\beta=(\beta_1,\dots, \beta_{k})$ is a $k$-part partition, we obtain the bundle 
\[
\Sch_\alpha(Q)\otimes \Sch_\beta(S)
\]
on $X$. Let
\begin{equation}
\nu^\prime_{\alpha,\beta}=(\alpha_1+N,\dots ,\alpha_{N-k}+k+1, \beta_1+k,\dots, \beta_{k}+1)
\end{equation}
 be the shifted juxtaposition of the sequences $\alpha$ and $\beta$, i.e., we put $\alpha$ and $\beta$ side by side and add the sequence $(N,N-1,\dots, 1)$ to $(\alpha,\beta)$. Then choose $w_{\alpha,\beta}\in S_N$  such that the permuted sequence $s_{\alpha,\beta}\cdot \nu^\prime_{\alpha, \beta}$ becomes a partition, and we define
\begin{equation}\label{eqn-new-partition}
 \nu_{\alpha,\beta}:=s_{\alpha,\beta}\cdot \nu^\prime_{\alpha,\beta}-(N,N-1,\dots, 1).
\end{equation}
Also recall that the \emph{length} $|s|$ of a permutation $s\in S_N$  equals the number of inversions that occurs under the permutation of $s$ on $\{1,\dots, N\}$, i.e., $s(j)<s(i)$ but $1\leq i < j \leq N$ .

We will also use the transpose operation on partitions: $\Sch_{\alpha}(Q^*)=\Sch_{\alpha^*}(Q)$, where $\alpha^*$ is obtained from $\alpha$ as
\begin{equation}
\alpha^*=(-\alpha_{N-k}, \dots, -\alpha_1).
\end{equation}

\begin{theorem}[Borel-Weil-Bott]\label{thm-BWB-Gr}
Let $\alpha$ be an $(N-k)$-part partition and $\beta$ be a $k$-part partition. Then the vector bundle $\Sch_{\alpha}(Q)\otimes \Sch_{\beta}(S)$ on the Grassmannian $X:=\mathrm{Gr}(k,N)$ has at most one nonzero cohomology group:
\[
\mH^i(X,\Sch_{\alpha}(Q)\otimes \Sch_{\beta}(S))\cong 
\left\{
\begin{array}{cc}
\Sch_{\nu_{\alpha,\beta}}(V) & \textrm{if $\nu_{\alpha,\beta}$ is a partition and $i=|s_{\alpha,\beta}|$},\\
0 & \textrm{otherwise.}
\end{array}
\right.
\] 
Furthermore, the isomorphism is an identification of $SL_N$-representations.
\end{theorem}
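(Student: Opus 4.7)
I would begin by realizing the Grassmannian $X = \mathrm{Gr}(k,N)$ as $G/P$, with $G = SL_N$ and $P$ a maximal parabolic whose Levi factor is $L_P = GL_{N-k} \times GL_k$. Under this identification, the tautological sequence $0 \to S \to \underline{\C}^N \to Q \to 0$ is the associated bundle avatar of the $L_P$-split sequence $0 \to \C^{k} \to \C^N \to \C^{N-k} \to 0$, pulled back along $P \twoheadrightarrow L_P$. Consequently $\Sch_\alpha(Q)\otimes \Sch_\beta(S)\cong G\times_P V$, where $V:=\Sch_\alpha(\C^{N-k})\otimes \Sch_\beta(\C^k)$ is an irreducible $L_P$-module; in the standard coordinates on the diagonal torus, its highest weight $\mu$ is the concatenation $(\alpha_1,\ldots,\alpha_{N-k},\beta_1,\ldots,\beta_k)$.

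Next, I would apply Lemma \ref{lem-reduce-to-flag} to transfer the computation to the full flag variety $G/B$:
\[
\mH^i(G/P,\, G\times_P V) \cong \mH^i(G/B,\, G\times_B V).
\]
Viewing $V$ on the right as a $B$-module through its weight decomposition, the contributions of individual weight line bundles $\mathcal{L}_\nu$ are governed by the classical Borel-Weil-Bott theorem: each $\mH^\bullet(G/B, \mathcal{L}_\nu)$ is concentrated in a single degree $\ell(w_\nu)$ and equals the simple $G$-module with highest weight $w_\nu\cdot \nu$, provided $\nu + \rho$ lies in an open Weyl chamber; otherwise, the cohomology vanishes. Since the weights of $V$ form a $W_P$-saturated set about $\mu$, summing over this orbit (equivalently, running through the parabolic BGG resolution of Proposition \ref{rel_lie_BGG}) causes massive cancellation, and only the unique coset representative whose $\rho$-shift is dominant survives. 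This yields a single surviving cohomology group $\mH^{\ell(w)}(G/B, \mathcal{L}_\mu) = V_{w\cdot \mu}$, where $w\in W=S_N$ is the shortest element making $w(\mu+\rho)$ strictly dominant.

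The combinatorial translation of this output into the stated form is then essentially bookkeeping. Taking $\rho = (N,N-1,\ldots,1)$ (equivalent to the usual $SL_N$ choice modulo a central character), the sequence $\nu'_{\alpha,\beta}$ equals $\mu+\rho$; the permutation $s_{\alpha,\beta}$ that sorts it into a weakly decreasing partition is precisely $w$; the requirement that $\nu_{\alpha,\beta} := s_{\alpha,\beta}\cdot \nu'_{\alpha,\beta} - \rho$ be a genuine partition amounts to strict decrease after sorting, i.e., to $\mu+\rho$ being regular; and $\nu_{\alpha,\beta}=w(\mu+\rho)-\rho=w\cdot \mu$ is then the highest weight of the resulting $G$-representation. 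The main technical obstacle is the cancellation step: although Bott's original proof uses the Leray spectral sequence of $G/B\to G/P$ combined with Kempf vanishing on the fibers $P/B$, the proof most consonant with the spirit of the present paper proceeds via the parabolic BGG complex already invoked in Proposition \ref{rel_lie_BGG}, which collapses by length considerations onto the claimed single cohomology group.
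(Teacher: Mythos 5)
Your broad strategy — realize $\mathrm{Gr}(k,N)$ as $G/P$, identify $\Sch_\alpha(Q)\otimes\Sch_\beta(S)$ as the associated bundle of the irreducible $L_P$-module $V$ with highest weight $\mu=(\alpha,\beta)$, pass to $G/B$, and then translate via the classical Borel--Weil--Bott theorem — matches the paper's (very terse) proof, which simply says ``By pulling back to the flag variety, this reduces to the usual Borel--Weil--Bott Theorem for line bundles'' and cites \cite{Chip,Jantzen}. Your combinatorial bookkeeping at the end (identifying $\nu'_{\alpha,\beta}=\mu+\rho$, $s_{\alpha,\beta}$ as the sorting permutation, the regularity condition, and $\nu_{\alpha,\beta}=s_{\alpha,\beta}\cdot\mu$) is correct and a useful addition.

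However, there is a genuine gap in the middle of your argument, and it is worth being precise about why. Once you invoke Lemma \ref{lem-reduce-to-flag} you land on $\mH^\bullet(G/B,\,G\times_B V)$, which is the cohomology of a \emph{rank-$\dim V$} vector bundle, not of a line bundle. The classical Borel--Weil--Bott theorem applies to line bundles $\mathcal{L}_\nu$; a $B$-module of dimension $>1$ is not a direct sum of its weight lines (only the associated graded of a $B$-stable filtration is), so ``summing the contributions of the individual weight line bundles'' is not legitimate. At that point you assert that running the parabolic BGG complex of Proposition \ref{rel_lie_BGG} ``collapses by length considerations'' onto a single cohomology group, but you offer no argument that the differentials are nonzero where needed. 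They are not forced to be nonzero by length alone: the cancellation actually depends on the fact that $V$ is irreducible over $L_P$, so that the $L_P$-root vectors (which are among the maps $d_j^*$) act nontrivially between the relevant weight spaces. This is exactly the hard content of the theorem and cannot be dismissed as bookkeeping.

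The route the paper's sketch actually points at — and the route followed by the cited sources — is in the opposite direction: rather than pulling $G\times_P V$ back to $G/B$, one pushes the line bundle $\mathcal{L}_\mu$ on $G/B$ down to $G/P$. Kempf vanishing applied to the fibers $P/B$ (which are full flag varieties for the reductive group $L_P$, with $\mu$ an $L_P$-dominant weight) gives $\mathbf{R}^j\nu_*\mathcal{L}_\mu = G\times_P V$ if $j=0$ and $0$ otherwise, whence the degenerating Leray spectral sequence yields $\mH^\bullet(G/P, G\times_P V)\cong \mH^\bullet(G/B, \mathcal{L}_\mu)$ directly. That does land you on a genuine line bundle, after which the usual Borel--Weil--Bott theorem finishes. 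You mention this alternative yourself and then set it aside; but it is in fact the cleaner and complete argument, while the BGG-collapse claim as written does not constitute a proof.
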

\begin{proof}By pulling back to the flag variety, this reduces to the usual Borel-Weil-Bott Theorem for line bundles. See \cite[Section 5]{Chip} or \cite{Jantzen} for more details.
\end{proof}

We now establish the Table \ref{table-hodge-gr24} for $X=\mathrm{Gr}(2,4)$. First off, by Corollary \ref{cor-sl2-subalgebra}, the left most column and the bottom rows are identified with the cohomology of ring  $\mH^*(X,\C)$. Furthermore, by applying Theorem \ref{thm-diagonal-ones}, we know that the main diagonal of Table \ref{table-hodge-gr24} consists of one-dimensional spaces spanned by the powers of the Poisson bivector field $\tau$. Hence we are reduced to showing that
\begin{equation}
\label{eqn-Gr24-case1}
h^{2}(\wedge^4 T\Nt_P)=2,
\end{equation}
and 
\begin{equation}\label{eqn-Gr24-case2}
h^1(\wedge^3T\Nt_P)=h^1(\wedge^5 T\Nt_P)=2.
\end{equation}

\paragraph{Case I: $h^{2}(\wedge^4 T\Nt_P)$.} By the equivariant structure of 
\begin{equation}
\mathcal{F}_1:=\mathrm{pr}_*(\wedge^4 T\Nt_P)^{-6}\cong
G\times_P\left(\frac{\g\otimes \wedge^3 \n_P \oplus \lu_P\otimes\wedge^4\n_P}{\Delta (\p) \wedge (\wedge^3\n_P)}\right)
\end{equation}
on $X$ (Corollary \ref{cor-wedge-P-structure}), the bundle fits into a short exact sequence
\begin{equation}
0\lra T\otimes \Omega^4\lra \mathcal{F}_1\lra T\otimes \Omega^3\lra 0.
\end{equation}
Upon taking cohomology, we obtain
\begin{equation}\label{eqn-Gr24-les-1}
\cdots \lra \mH^2(T\otimes \Omega^4)\lra \mH^2(\mathcal{F}_1)\lra \mH^2(T\otimes \Omega^3)\lra \mH^3(T\otimes \Omega^4)\lra\cdots.
\end{equation}
To compute $\mH^2(T\otimes \Omega^4)$ and $\mH^2(T\otimes \Omega^3)$, we use the bundle isomorphisms $\Omega^4 = \mathcal{O}(K)$, which is the canonical bundle, and $\Omega^3\cong T(K)$, together with Serre duality to obtain
\[
\mH^i(T\otimes \Omega^4)\cong  \mH^{4-i}(\Omega)^* \quad \quad \mH^i(T\otimes \Omega^3)\cong \mH^i(T \otimes T(K) )\cong \mH^{4-i}(\Omega\otimes \Omega)^*.
\]
for any $i\in \{0,1,2,3,4\}$. Then the sequence \eqref{eqn-Gr24-les-1} reduces to
\begin{equation}
0=\mH^2(\Omega)\lra \mH^2(\mathcal{F}_1)\lra \mH^2(\Omega \otimes \Omega)\lra \mH^1(\Omega)\cong L_0.
\end{equation}
Next we compute $\mH^2(\Omega \otimes \Omega)$. Since $\Omega \otimes \Omega \cong (Q^*)^{\otimes 2}\otimes S^{\otimes 2}$ via equation \eqref{eqn-tan-cotan-bundle-Gr24}, we have
\[
\Omega^{\otimes 2}\cong \bigoplus_{\alpha,\beta\in \{(2,0), (1,1)\}}\Sch_{\alpha}(Q^*)\otimes \Sch_{\beta}(S),
\]
 each partition appearing with multiplicity one.

By the BWB Theorem \ref{thm-BWB-Gr}, it is easy to compute that
\[
\mH^2(\Sch_{(2,0)}(Q^*)\otimes \Sch_{(1,1)}(S))\cong L_0\cong  \mH^2(\Sch_{(1,1)}(Q^*)\otimes \Sch_{(2,0)}(S)),
\]
while
\[
\mH^2(\Sch_{(2,0)}(Q^*)\otimes \Sch_{(2,0)}(S))\cong 0 \cong  \mH^2(\Sch_{(1,1)}(Q^*)\otimes \Sch_{(1,1)}(S)).
\]
For instance, the bundle $\Sch_{(2,0)}(Q^*)\otimes \Sch_{(1,1)}(S)$ gives us the sequence
\[
(0,-2,1,1)\xrightarrow{+(4,3,2,1)}(4,1,3,2)\xrightarrow{s=(243)\in S_4}(4,3,2,1)\xrightarrow{-(4,3,2,1)}(0,0,0,0).
\]
The permutation $s=(243)$ indicates the usual permutation action of the second position going to the fourth, and the fourth going to the third, while the third moving back to the second. It has length $2$, and hence
\begin{equation}\label{eqn-eg-schur-functor}
\mH^2(\Sch_{(2,0)}(Q^*)\otimes \Sch_{(2,0)}(S))\cong L_{0},
\end{equation}
and 
\begin{equation}\label{eqn-coh-Omega-tensor-2}
\mH^2(\Omega\otimes \Omega)\cong L_0^{\oplus 2}\quad \quad \mH^i(\Omega \otimes \Omega)=0~(i\neq 2).
\end{equation}
as $SL_4(\C)$-representations.

It then follows that $\mH^2(\mathcal{F}_1)$ can consist at most of two copies of $L_0$. The fact that it indeed equals $L_0^{\oplus 2}$ holds because of the fact that
\[
\tau\wedge(-):  \mH^2(\Nt_P \wedge^2 T\Nt_P)(\cong L_0^{\oplus 2})\lra \mH^2(\Nt_P,\wedge^4 T\Nt_P)
\]
is injective (Corollary \ref{cor-sl2-subalgebra}). Equation \ref{eqn-Gr24-case1} now follows.

\paragraph{Case II: $h^{1}(\wedge^3 T\Nt_P)$.} Again, we utilize the explicit description of the $G$-bundle (see Corollary \ref{cor-wedge-P-structure} or Example \ref{eg-some-G-bundle})
\begin{equation}
\mathcal{F}_2:=\mathrm{pr}_*(\wedge^3 T\Nt_P)^{-4}
\cong G\times_P \left(\frac{\g\otimes \n_P\wedge\n_P\oplus \lu_P\otimes \n_P\wedge \n_P\wedge \n_P}{\Delta(\p)\wedge \n_P\wedge \n_P}\right),
\end{equation}
which tells us that $\mathcal{F}_2$ fits into a short exact sequence of vector bundles
\begin{equation}
0\lra T\otimes \Omega^3 \lra \mathcal{F}_2\lra T\otimes \Omega^2\lra 0.
\end{equation}
Since $X$ has dimension $4$, we have the isomorphism of bundles $T\otimes \Omega^3 \cong T\otimes T(K)$, and $T\otimes \Omega^2\cong T\otimes \wedge^2T (K)$. Via Serre duality, we obtain
\begin{equation}
\mH^i(T\otimes \Omega^3)\cong \mH^{4-i}(\Omega \otimes \Omega)^*,\quad \quad
\mH^{i}(T\otimes \Omega^2)\cong \mH^{4-i}(\Omega\otimes \Omega^2)^*.
\end{equation}
Thus the cohomology of $\mathcal{F}_2$ fits into a long exact sequence 
\begin{equation}\label{eqn-Gr24-les-2}
0\lra \mH^1(\mathcal{F}_2)\lra \mH^3(\Omega\otimes \Omega^2)^*\lra   L_0^{\oplus 2} \lra \mH^2(\mathcal{F}_2)\lra 0
\end{equation}
by using the cohomology of $\Omega\otimes \Omega$ computed in \eqref{eqn-coh-Omega-tensor-2}. 

We now turn to finding 
$\mH^\star(\Omega\otimes \Omega^2)$. It is an easy exercise to see that
\begin{equation}
\Omega\otimes \Omega^2 \cong 
\Sch_{(3,0)}(Q^*)\otimes \Sch_{(2,1)}(S)\oplus \left(\Sch_{(2,1)}(Q^*)\otimes \Sch_{(2,1)}(S)\right)^{\oplus 2} \oplus
\Sch_{(2,1)}(Q^*)\otimes \Sch_{(3,0)}(S).
\end{equation}
By using the BWB Theorem \ref{thm-BWB-Gr}, it is readily seen that only the middle summand contributes to the cohomology of the bundle, and we have
\begin{equation}
\mH^3(\Omega\otimes \Omega^2)\cong L_0^{\oplus 2}, \quad \quad\mH^i(\Omega\otimes \Omega^2)=0 ~(i\neq 3).
\end{equation}

Thus the exact sequence \eqref{eqn-Gr24-les-2} becomes
\[
0\lra \mH^1(\mathcal{F}_2)\lra L_0^{\oplus 2}\lra   L_0^{\oplus 2} \lra \mH^2(\mathcal{F}_2)\lra 0,
\]
so that $\mH^1(\mathcal{F}_2)$ can contain at most two copies of the trivial representation of $SL_4$. On the other hand, it will have at least one copy of the trivial representation, since the map
\[
\tau\wedge(-): \mH^1(\Nt_P, T\Nt_P)^{-2}(\cong L_0)\lra \mH^{1}(\Nt_P,\wedge^3T\Nt_P)^{-4}
\]
is injective. 

To conclude the computation, we will resort to finding the relative Lie algebra cohomology of the sheaf $\mathcal{F}_2$. Lemma \ref{lem-reduce-to-flag} allows us to pull this bundle back to the full flag variety of $SL(4)$, and compute the sheaf cohomology for the bundle 
\[
G \times _B V_3^{-4} = G\times_B \left(\frac{\g\otimes \n_P\wedge\n_P\oplus \lu_P\otimes \n_P\wedge \n_P\wedge \n_P}{\Delta(\p)\wedge \n_P\wedge \n_P}\right).
\]

Then Theorem \ref{Bott_rel_lie}  and Lemma \ref{rel_lie_BGG} states that  
\[ \op{Hom}_G(L_0, \mH^1(G/B, G \times _B V_3^{-4}) \cong \op{H}^1(\lb, \h, V_3^{-4})  \] 
can be computed as the cohomology of the complex (\ref{BGG_complex}) for $E = V_3^{-4}$. 

To compute this relative Lie algebra cohomology, we need to find the dominant  weights $\lambda$ such that  $s_1 \cdot \lambda, \; s_2 \cdot \lambda$ or $s_3 \cdot \lambda$ is a weight  in $V_3^{-4}$.   
The only weight with this property is $\lambda =0$. There are no elements of zero weight in $V_3^{-4}$, and the only nontrivial weight space with weights of the form $w \cdot 0$ and $l(w) =1$ is for $w=s_2$. This weight space is $4$-dimensional and it is spanned by the elements  
\[ \{ e_{12} \otimes f_2 \wedge f_{12}, \;\; e_{23} \otimes f_2 \wedge f_{23} , \;\;  e_{123} \otimes f_2 \wedge f_{123} , \;\; e_{123} \otimes f_{12} \wedge f_{23} \}  \] 
where we denote 
$$e_{12} = [e_2, e_1], \quad e_{23} = [e_3, e_2], \quad e_{123} = [e_3, [e_2, e_1]],$$ 
$$f_{12} = [f_1, f_2], \quad f_{23} = [f_2, f_3], \quad f_{123} = [f_1, [f_2, f_3]] = [[f_1, f_2], f_3]$$ 
in terms of the standard Chevalley generators. 

 Then the complex (\ref{BGG_complex}) 
 for $E = V_3^{-4}$ and $\lambda =0$ becomes 
 \begin{equation} \label{BGG-V_34}
 0=V_3^{-4}[0] \stackrel{d_1^*}{\lra} V_3^{-4} [s_2 \cdot 0]   
 \stackrel{d_2^*}{\lra} \bigoplus_{l(w)=2} V_3^{-4}[w \cdot 0]   \lra \cdots  \ .
 \end{equation} 
 
 \begin{lemma} The first cohomology group of the complex (\ref{BGG-V_34}) is $2$-dimensional. 
 \end{lemma}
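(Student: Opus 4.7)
Since $V_3^{-4}[0]=0$, the first cohomology of the complex \eqref{BGG-V_34} collapses to $\ker(d_2^*)$, with $d_2^*$ acting on the four-dimensional space $V_3^{-4}[s_2\cdot 0]$ described in the statement. The plan is therefore to identify the target weight spaces that can receive a nonzero component of $d_2^*$, write down the differential explicitly on the four given basis vectors, and read off the dimension of its kernel. By the BGG description of $d_j^*$ recalled after Proposition \ref{rel_lie_BGG}, only the length-two Weyl group elements that cover $s_2$ in Bruhat order can contribute, namely $s_1s_2,\,s_2s_1,\,s_2s_3,\,s_3s_2$; the element $s_1s_3$ drops out since $s_2\not\le s_1s_3$.

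The first step is to compute, for each of the four covering elements $w$, the weight space $V_3^{-4}[w\cdot 0]$. For this I would first list all weight vectors of weight $w\cdot 0$ in the unreduced module $\g\otimes \wedge^2\n_P\oplus \lu_P\otimes\wedge^3\n_P$, and then carefully reduce modulo the image of $\Delta(\p)\wedge\wedge^2\n_P$, since this is the subtle part where $\g\otimes\wedge^2\n_P$ vectors are identified with $\lu_P\otimes\wedge^3\n_P$ vectors. The weights $w\cdot 0$ lie at $-(\alpha_1+\alpha_2),\,-(\alpha_1+2\alpha_2),\,-(\alpha_2+2\alpha_3),\,-(2\alpha_2+\alpha_3)$ (computed from $w\cdot 0=w(\rho)-\rho$), and each of these targets is a small, explicit finite-dimensional quotient.

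Next I would apply the components of $d_2^*$ to each of the four basis vectors $e_{12}\otimes f_2\wedge f_{12}$, $e_{23}\otimes f_2\wedge f_{23}$, $e_{123}\otimes f_2\wedge f_{123}$, $e_{123}\otimes f_{12}\wedge f_{23}$. Using the explicit formulas from \cite{LQ1} (Section 4.3), each component is (up to a nonzero BGG scalar) the action of the appropriate simple root vector $e_i$ or $f_i$ on the relevant tensor slot. This produces a $4\times N$ matrix over $\Q$ whose corank we want to show equals $2$. The $\mathfrak{sl}_2$-type injection $\tau\wedge(-):\op{H}^1(\lb,\h,V_1^{-2})\hookrightarrow \op{H}^1(\lb,\h,V_3^{-4})$ from Corollary \ref{cor-sl2-subalgebra} already guarantees one element in the kernel coming from the Poisson bivector multiplication, so the content of the lemma is to produce one further independent element in $\ker(d_2^*)$ and to show the rank does not exceed $2$.

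The main obstacle is the bookkeeping modulo the $\Delta(\p)$-relations: the basis vectors involve the non-simple root vectors $e_{12},\,e_{23},\,e_{123},\,f_{12},\,f_{23},\,f_{123}$, so raising or lowering operators produce multiple terms whose various reductions in the quotient module must be tracked carefully. Once these reductions are carried out, I expect two of the four basis vectors (those involving $e_{123}$) to produce two relations that cut the image of $d_2^*$ down to a two-dimensional subspace, matching the upper bound $\dim\op{Hom}_G(L_0,\op{H}^1(\mathcal{F}_2))\le 2$ derived from the long exact sequence in Case II. This would complete the proof and simultaneously confirm $h^1(\wedge^3 T\Nt_P)=2$ in Table \ref{table-hodge-gr24}.
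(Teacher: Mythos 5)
Your proposal outlines essentially the same approach as the paper: reduce $\mH^1$ to $\ker(d_2^*)$ on the four-dimensional space $V_3^{-4}[s_2\cdot 0]$, identify the four covering elements $s_1s_2, s_2s_1, s_2s_3, s_3s_2$, and observe that the delicate step is the reduction modulo $\Delta(\p)\wedge\n_P\wedge\n_P$. The paper carries this out explicitly: the $f_1^2$ and $f_3^2$ components of $d_2^*$ impose the conditions $a=c-d$, $b=c+d$ on a general vector $v = a\,e_{12}\otimes f_2\wedge f_{12}+b\,e_{23}\otimes f_2\wedge f_{23}+c\,e_{123}\otimes f_2\wedge f_{123}+d\,e_{123}\otimes f_{12}\wedge f_{23}$, and then the remaining two components $2f_2f_1-f_1f_2$ and $-2f_2f_3+f_3f_2$ produce elements $u_1,u_2$ which are checked to lie in $\Delta(\p)\wedge\n_P\wedge\n_P$, hence vanish in the quotient; this yields a two-parameter kernel. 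One small slip to note: the target weight for $s_1s_2$ is $s_1s_2\cdot 0=-(2\alpha_1+\alpha_2)$, not $-(\alpha_1+\alpha_2)$ as you wrote (the other three are correct), and the phrase ``two of the four basis vectors... produce two relations'' slightly misdescribes what happens — it is two components of the differential that impose conditions, cutting $a,b$ in terms of the free parameters $c,d$, which happen to be the coefficients of the $e_{123}$ vectors.
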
 
 \begin{proof} 
 The image of the map $d_1^*$ is zero. The map $d_2^*$ acting on a subspace of weight $s_2 \cdot 0$ 
 is given explicitly in the table in \cite[Section 4.3]{LQ1}: 
\[ \left. d_2^{*}\right|_{s_2 \cdot 0} =   -f_1^2 \oplus (2 f_2 f_1 - f_1 f_2 ) \oplus f_3^2 \oplus (-2 f_2 f_3 + f_3 f_2) .\] 

Suppose that $v= a e_{12} \otimes f_2 \wedge f_{12} +b  e_{23} \otimes f_2 \wedge f_{23} +c  e_{123} \otimes f_2 \wedge f_{123} +d e_{123} \otimes f_{12} \wedge f_{23}$ is an element in $V_3^{-4}[s_2 \cdot 0]$. Then we compute 
\[ f_1^2  (v) = (2b-2c-2d) e_{23} \otimes f_{12} \wedge f_{123} \] 
\[ f_3^2 (v) = (2a -2c+2d) e_{12} \otimes f_{23} \wedge f_{123} , \] 
so for $v$ to be in the kernel of $d_2^*$, we need to set $a = c-d$ and $b=c+d$. 
Acting on this $v$ by the remaining two maps, we obtain 
\[ (2f_2 f_1 - f_1 f_2) (v) =  (c-d) u_1,  \quad  \quad (2f_2 f_3 - f_3 f_2) (v) = (c+d) u_2, \]
where 
\[ u_1 =  2 h_2 \otimes f_2 \wedge f_{12} + h_1 \otimes f_2 \wedge f_{12}- e_3 \otimes f_{12} \wedge f_{23} + e_3 \otimes f_2 \wedge f_{123} , \] 
and 
\[ u_2 = -2 h_2 \otimes f_2 \wedge f_{23} - h_3 \otimes f_2 \wedge f_{23}  + e_1 \otimes f_2 \wedge f_{123}   
+ e_1 \otimes f_{12} \wedge f_{23} . \] 

Both expressions $u_1$ and $u_2$ lie in the submodule $\Delta(\p)\wedge \n_P\wedge \n_P$. 
For example, we have 
\[ 
\begin{array}{rcl}  
\Delta (h_1 \otimes f_2 \wedge f_{12}) & = & h_1 \otimes f_2  \wedge f_{12}   + e_{23}  \otimes f_{23} \wedge f_2 \wedge f_{12} - e_{123} \otimes f_{123} \wedge f_2 \wedge f_{12} , \\  
\Delta (h_2 \otimes f_2 \wedge f_{12}) & = & h_2 \otimes f_2 \wedge f_{12} - e_{23} \otimes f_{23} \wedge f_2 \wedge f_{12} , \\ 
\Delta (e_3 \otimes f_{12} \wedge f_{23}) & = & e_3 \otimes f_{12} \wedge f_{23} - e_{23} \otimes f_2 \wedge f_{12} \wedge f_{23} , \\ 
\Delta ( e_3 \otimes f_2 \wedge f_{123} ) & = & e_3 \otimes f_2 \wedge f_{123} - e_{123} \otimes f_{12} \wedge f_2 \wedge f_{123} . \\
\end{array}  \] 
This implies the equality 
\[ u_1 \equiv 0 \quad (\op{mod}~\Delta (\p) \wedge \n_P \wedge \n_P ) .\] 
The computation for $u_2$ is similar. 

Thus we obtain that the kernel of $d_2^*$ in the complex (\ref{BGG-V_34}) is $2$-dimensional, and so is the first cohomology group of this complex.  
\end{proof}  

Therefore the cohomology $\op{H}^1(\lb, \h, V_3^{-4})$ is $2$-dimensional and we have  $\mH^1(\F_2) \cong L_0^{\oplus 2}$. 

Finally, we have obtained the dimension of the remaining components of the block of the center 
\[ 
h^1 (\wedge^3 T \Nt_P) = h^1( \wedge^5 T \Nt_P) = 2 .
\] 
This concludes the validation of the formal Hodge diamond (Table \ref{table-hodge-gr24}) for $Gr(2,4)$ .

%
%
%

\subsection{Regular blocks for type \texorpdfstring{$B_2$}{B2}}\label{app-b2}

In the second appendix, let us fix $\g=\mathfrak{so}_5$. We show that, outside of type $A$, the trivial module conjecture formulated in \cite[Conjecture 4.7]{LQ1} fails even in the simplest case of $B_2$. Therefore, the conjecture is purely a type-$A$ phenomenon.

To do so, we show that the formal Hodge diamond (Definition \ref{def-hodge-diamond}) for a regular block center of $\ul_q(\mathfrak{so}_5)$ has the following bigraded dimension table.

\begin{gather}  \label{table-hodge-b2} 
\begin{array}{|c||c|c|c|c|c|} \hline 
         \scriptstyle{ j+i=0 } & 1   &                         &                         &            &              \\  \hline 
              \scriptstyle{ j+i=2 } & 2   &   1                 &                         &           &               \\  \hline 
               \scriptstyle{ j+i =4 }   & 2   &  2   &         1 &        &                  \\  \hline 
               \scriptstyle{j + i = 6} &  2  & 3 + 5  & 2    & 1  &      \\ \hline 
             \scriptstyle{ j+i=8 }    &  1   &  2  &  2  &   2  & 1   \\  \hline \hline 
 \scriptstyle{h^{i,j}} &  \scriptstyle{ j-i=0 }       &     \scriptstyle{ j-i=2 }       &   \scriptstyle{ j-i=4 } &  \scriptstyle{ j-i=6 }   &  \scriptstyle{ j-i=8 }   \\ \hline 
\end{array}  
\end{gather}

Here in this table, we have written the entry $h^{2,4}$ as a sum
$$h^{2,4}=\mathrm{dim}(\mH^2(\Nt,\wedge^4T\Nt)^{-6})=8=3+5.$$
This is because, as we will see that, in fact, there is a decomposition
\begin{equation}\label{eqn-B2-interesting-term}
\mH^2(\Nt,\wedge^4T\Nt)^{-6}\cong L_0^{\oplus 3}\oplus L_{\alpha_1+\alpha_2}
\end{equation}
of $SO_5$ representations. Here $L_0$ stands for the trivial representation for $SO_5$, while $L_{\alpha_1+\alpha_2}$ is the simple module with highest weight the sum of two simple positive roots $\alpha_1$, $\alpha_2$ for $\mathfrak{so}_5$, i.e., it is the $5$-dimensional defining representation for $SO_5$. Other than this entry, the rest of table consist of direct sums of trivial $SO_5$ representations.

To establish Table \ref{table-hodge-b2}, we fix some notation in this situation. Consider the Chevalley generators
$
e_1, e_2, h_1, h_2, f_1, f_2\in \mathfrak{so}_5
$
subject to the relations
\[
[h_1,e_1]=2e_1, \quad [h_1,e_2]=-e_2, \quad [h_2, e_1]=-2e_1, \quad [h_2,e_2]=2e_2,
\]
\[
[h_1,f_1]=-2f_1, \quad [h_1,f_2]=f_2, \quad [h_2, f_1]=2f_1, \quad [h_2,f_2]=-2f_2,
\]
and the Serre relations
\[
e_2^3e_1-3e_2^2e_1e_2+3e_2e_1e_2^2-e_1e_2^3=0,\quad \quad  e_1^2e_2-2e_1e_2e_1+e_2e_1^2=0, 
\]
\[
f_2^3f_1-3f_2^2f_1f_2+3f_2f_1f_2^2-f_1f_2^3=0,\quad \quad f_1^2f_2-2f_1f_2f_1+f_2f_1^2=0.
\]

The weight of $e_1$ (resp.~$f_1$)is the long root $\alpha_1$ (resp.~$-\alpha_1$) while the weight of $e_2$ (resp.~$f_2$) is the short root $\alpha_2$ (resp.~$-\alpha_2$). For latter use, let us also fix
\[
f_3:=[f_1,f_2],\quad f_4:=[f_3,f_2], \quad e_3:=[e_2,e_1],\quad e_4:=[e_2,e_3].
\]

Proposition \ref{BGG_complex} in this case translates into the following particular complex. For any $\lb$-module $E$, the multiplicity space
\[
\mathrm{Hom}_{SO_5}(L_0,\mH^i (X, G\times_B E))
\]
is computed as the dimension of the $i$th cohomology group of the complex 
\begin{equation} \label{eqn-BGG-B2}
\begin{gathered}
\xymatrix@C=0.75em{
&& E[-\alpha_1]\ar[rr]^-{f_2^3}\ar[ddrr]^(0.25){u_1} && E[-\alpha_1-3\alpha_2]\ar[rr]^-{-f_1^2}\ar[ddrr]^(0.25){v_1}&& E[-3\alpha_1-3\alpha_2] \ar[drr]^-{-f_2} && \\
E[0] \ar[urr]^{f_1}\ar[drr]_{-f_2} && && && && E[-4\alpha_1-4\alpha_1]\\
&& E[-\alpha_2]\ar[rr]_-{f_1^2}\ar[uurr]_(0.25){u_2} && E[-2\alpha_1-\alpha_2]\ar[rr]_-{f_2^3}\ar[uurr]_(0.25){v_2} && E[-2\alpha_1-4\alpha_2]\ar[urr]_-{f_1} &&
} 
\end{gathered}\ ,
\end{equation}
where the middle arrows are given by the action of the elements
\[
u_1= 2f_1f_2-f_2f_1, \quad \quad u_2= 3f_2^2f_1-3f_2f_1f_2+f_1f_2^2,
\]
\[
v_1= f_1f_2-2f_2f_1, \quad \quad v_2= f_2^2f_1-3f_2f_1f_2+3f_1f_2^2.
\]

We will perform the sample computations that lead to the decomposition \eqref{eqn-B2-interesting-term}, and leave the rest of the computations for Table \ref{table-hodge-b2} to the reader as exercises.

On the flag variety $X=G/B$, we have an identification of the bundle
\begin{equation}\label{eqn-b2-B-mod-structure}
\mathrm{pr}_*(\wedge^4 T\Nt)^{-6}\cong G\times_B\frac{\g\otimes \wedge^3\n\oplus \lu\otimes \wedge^4 \n}{\Delta(\lb\otimes \wedge^3 \n)} ,
\end{equation}
where $\Delta$ is induced from the diagonal map \eqref{eqn-Delta-map}. It follows that we have a short exact sequence of vector bundles on $X$:
\begin{equation}\label{eqn-b2-ses-bundles}
0\lra T_X\otimes \Omega_X^4 \lra \mathrm{pr}_*(\wedge^4 T\Nt)^{-6} \lra T_X\otimes \Omega_X^3 \lra 0.
\end{equation}
We will compute the cohomology groups $\mH^2(X,T_X\otimes \Omega_X^4)$ and $\mH(X,T_X\otimes \Omega_X^3)$ as an intermediate step.

\begin{lemma} \label{lem-auxiliary-bundle-b2}
For the flag variety $X$ of $\mathfrak{so}_5$, there are isomorphisms of $\mathfrak{so}_5$-representations
\[
\mH^i(X,T_X\otimes \Omega_X^4)
\cong 
\left\{
\begin{array}{cc}
L_0^{\oplus 2} & i=3,\\
0 & i\neq 3.
\end{array}
\right.
\quad \quad
\mH^i(X,T_X\otimes \Omega_X^3)
\cong
\left\{
\begin{array}{cc}
 L_0^{\oplus 3}
 \oplus L_{\alpha_1+\alpha_2} & i=2,\\
 0 & i\neq 2.
\end{array}
\right.
\]
\end{lemma}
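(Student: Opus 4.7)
The plan is to treat the two bundles separately, reducing each to a sheaf cohomology computation on $X = G/B$ via Serre duality and then appealing to either classical Hodge theory for the rational homogeneous space $X$ or the BGG-type complex \eqref{eqn-BGG-B2} for $B_2$.

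For $T_X\otimes\Omega_X^4$: since $\dim_\C X = 4$, we have $\Omega_X^4 = K_X$, and Serre duality yields $\mH^i(X, T_X\otimes K_X)\cong \mH^{4-i}(X,\Omega_X)^\ast$. As $X = G/B$ is rational homogeneous, its Hodge decomposition collapses off the diagonal: $\mH^{p,q}(X) = 0$ for $p\neq q$, with each $\mH^{p,p}(X)$ a trivial $G$-module of dimension equal to the number of length-$p$ Weyl group elements. In particular $\mH^1(X,\Omega_X)\cong L_0^{\oplus 2}$ while the other $\mH^j(X,\Omega_X)$ vanish, producing the first claimed formula.

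For $T_X\otimes\Omega_X^3$: using $\Omega_X^3\cong T_X\otimes K_X$ one has $T_X\otimes\Omega_X^3\cong T_X^{\otimes 2}\otimes K_X$, so by Serre duality $\mH^i(X,T_X\otimes\Omega_X^3)\cong \mH^{4-i}(X,\Omega_X\otimes\Omega_X)^\ast$. Split $\Omega_X\otimes\Omega_X \cong \mathrm{Sym}^2\Omega_X \oplus \Omega_X^2$. By Hodge theory, the wedge summand $\Omega_X^2$ contributes $L_0^{\oplus 2}$ in degree $2$ (there being two length-$2$ elements in $W(B_2)$) and nothing elsewhere. For the symmetric summand $\mathrm{Sym}^2\Omega_X\cong G\times_B\mathrm{Sym}^2\n$, I will invoke Theorem \ref{Bott_rel_lie} and Proposition \ref{rel_lie_BGG}: for each dominant weight $\nu$, the multiplicity of $L_\nu$ in $\mH^\bullet(X,\mathrm{Sym}^2\Omega_X)$ equals the cohomology of the complex \eqref{eqn-BGG-B2} with $E = \mathrm{Sym}^2\n$. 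A direct enumeration of the ten weights of $\mathrm{Sym}^2\n$ against the dot orbits $W\cdot\nu$ shows that only $\nu = 0$ and $\nu = \alpha_1+\alpha_2$ (the highest weight of the defining $5$-dimensional $SO_5$-module) contribute. For $\nu = \alpha_1+\alpha_2$, the single nonzero weight space $\mathrm{Sym}^2\n[s_1 s_2\cdot(\alpha_1+\alpha_2)] = \C\{f_1^2\}$ sits at position $-2\alpha_1$ in length $2$ and is isolated from its BGG neighbours, so it survives as exactly one copy of $L_{\alpha_1+\alpha_2}$ in degree $2$. For $\nu = 0$, the nonzero weight spaces are $\C\{f_1 f_3\},\C\{f_2 f_4\}$ in length $2$ and $\C\{f_4^2\}$ in length $3$; the length-$2$-to-$3$ differential, given by $v_1$ on $f_2 f_4$ and by $f_2^3$ on $f_1 f_3$, will turn out to be a rank-one linear map, yielding one copy of $L_0$ in degree $2$ and nothing in degree $3$. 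Summing all contributions and dualising (using self-duality of $L_{\alpha_1+\alpha_2}$) gives $\mH^2(X, T_X\otimes\Omega_X^3)\cong L_0^{\oplus 3}\oplus L_{\alpha_1+\alpha_2}$ with vanishing elsewhere, as required.

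The main obstacle will be the explicit evaluation of the length-$2$-to-$3$ BGG differential for $\nu = 0$: the actions of $f_2^3$ and $v_1 = f_1 f_2 - 2 f_2 f_1$ must be computed iteratively as derivations on $\mathrm{Sym}^2\n$, which requires the $B_2$ Serre relations together with the vanishings $[f_1, f_3] = [f_1, f_4] = [f_2, f_4] = 0$ forced by the absence of the corresponding positive roots. These identities combine to produce a rank-one map, which is the essential input securing the single copy of $L_0$ in $\mH^2(X, \mathrm{Sym}^2\Omega_X)$; the case enumeration eliminating other dominant $\nu$ and other cohomological degrees is routine but unavoidable.
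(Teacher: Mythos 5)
Your proposal follows essentially the same route as the paper: Serre duality plus the vanishing of $\mH^\bullet(X,\Omega_X)$ off degree $1$ handles $T_X\otimes\Omega_X^4$; the identification $T_X\otimes\Omega_X^3\cong T_X^{\otimes 2}\otimes K_X$, the splitting $\Omega_X^{\otimes 2}\cong \mathrm{Sym}^2\Omega_X\oplus\Omega_X^2$, the classical computation of $\mH^\bullet(X,\Omega_X^2)$, and the BGG-complex analysis of $\mathrm{Sym}^2\n$ (with the isolated $f_1^2$ weight space producing $L_{\alpha_1+\alpha_2}$ and the rank-one differential from $\C f_2 f_4\oplus\C f_1 f_3$ to $\C f_4^2$ producing a single extra $L_0$) are exactly the paper's steps. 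Your version is slightly more careful with the $i\leftrightarrow 4-i$ bookkeeping in Serre duality, but this does not change the outcome since everything is concentrated in the middle degree.
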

\begin{proof}
In the course of the proof, we will drop $X$ indexing the flag variety of various bundles. Since $X$ is $4$-dimensional, $\Omega^4\cong K_X$ is the canonical bundle, and Serre duality tells us that
\[
\mH^\bullet (X, T\otimes \Omega^4)\cong \mH^{4-\bullet}(X, \Omega)^*.
\]
The first vanishing result follows since $\mH^\bullet(X,\Omega)$ is only nonzero in cohomological degree $1$.

Likewise, one has
\[
\mH^i(X,T\otimes \Omega^3)\cong \mH^i(X,\Omega\otimes \Omega)^*\cong \mH^i(X,S^2(\Omega))\oplus \mH^i(X, \Omega^2)^*.
\]
As the total cohomology of $\mH^i(X,\Omega^2)$ is known to be two copies of $L_0$ concentrated in cohomological degree $2$, we are reduced to computing $\mH^i(X,S^2(\Omega))$, where $S^2(\Omega)$ stands for the second symmetric product of the bundle $\Omega$.

To illustrate the general method, we will compute the relative Lie algebra cohomology of $S^2(\n)$. An inspection on the weights shows that the weight spaces of $S^2(\n_P)$ are all singular except in the following cases
\begin{enumerate}
\item[(1)] weight $-2\alpha_1$, spanned by $f_1^2$,
\item[(2)] weight $-2\alpha_1-\alpha_2$, spanned by $f_1f_3$,
\item[(3)] weight $-\alpha_1-3\alpha_2$, spanned by $f_2f_4$,
\item[(4)] weight $-2\alpha_1-2\alpha_2$, spanned by $f_4^2$.
\end{enumerate}
It is easily checked that $f_1^2$ is the only weight that lies in the orbit of the $(-\rho)$-shifted Weyl group action on $\alpha_1+\alpha_2$, and the unique element in the Weyl group that brings it to the dominant chamber has length two. It follows $L_{\alpha_1+\alpha_2}$ will figure in the cohomology of $S^2(\Omega)$ with multiplicity one in degree  $2$. The remaining three weights in (2), (3) and (4) all lie on the orbit of the $0$ weight, and the weight spaces are assembled into one BGG complex of \eqref{eqn-BGG-B2}:
\[
0\lra \C f_2f_4\oplus \C f_1f_3\xrightarrow{(f_1f_2-2f_2f_1, f_2^3)} \C f_4^2\lra 0
\]
where the first nontrivial term sits in cohomological degree $2$. It is then readily computed that the cohomology of this complex is $1$-dimensional in cohmological degree $2$.
\end{proof}

Taking cohomology of the short exact sequence \eqref{eqn-b2-ses-bundles}, we obtain a long exact sequence
\begin{equation}
0\lra \mH^2 (\Nt,\wedge^4 T\Nt)^{-6} \lra \mH^2(X,T_X\otimes \Omega_X^3)\lra \mH^3(X,T_X\otimes\Omega_X^4)\lra \cdots.
\end{equation}
Using Lemma \ref{lem-auxiliary-bundle-b2}, we are reduced to
\begin{equation}\label{eqn-H2-sequence}
0\lra \mH^2 (\Nt,\wedge^4 T\Nt)^{-6} \lra
L_0^{\oplus 3}\oplus L_{\alpha_1+\alpha_2} \lra L_0^{\oplus 2} \lra \mH^3 (\Nt,\wedge^4 T\Nt)^{-6} \lra 0.
\end{equation}
It follows that $\mH^2 (\Nt,\wedge^4 T\Nt)^{-6}$ has a copy of $L_{\alpha_1+\alpha_2}$ as an $\mathfrak{so}_5$-subrepresentation, and furthermore, it will contain at most $3$, and at least $2$ (using the $\mathfrak{sl}_2$ action on the formal Hodge diamond), copies of $L_0$ as summands.

\begin{lemma}
There is an isomorphism of $\mathfrak{so}_5$ representations
\[
\mH^i (\Nt,\wedge^4 T\Nt)^{-6}
\cong
\left\{
\begin{array}{cc}
L_0^{\oplus 3} \oplus L_{\alpha_1+\alpha_2} & i=2,\\
L_0^{\oplus 2} & i=3,\\
0 & i\neq 2,3.
\end{array}
\right.
\]
\end{lemma}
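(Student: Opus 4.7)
The strategy is to combine the long exact sequence \eqref{eqn-H2-sequence} with an independent computation that pins down one of the two unknown groups. First, from Lemma \ref{lem-auxiliary-bundle-b2} and the long exact sequence associated to \eqref{eqn-b2-ses-bundles}, one reads off immediately that $\mH^i(\Nt,\wedge^4 T\Nt)^{-6}=0$ for $i\notin\{2,3\}$, so the entire problem reduces to the analysis of the connecting homomorphism
\[
\delta\colon \mH^2(X,T_X\otimes \Omega_X^3)\cong L_0^{\oplus 3}\oplus L_{\alpha_1+\alpha_2}\longrightarrow \mH^3(X,T_X\otimes \Omega_X^4)\cong L_0^{\oplus 2}.
\]
Since $\delta$ is $\mathfrak{so}_5$-equivariant and $L_{\alpha_1+\alpha_2}$ admits no nonzero morphism to a trivial representation, Schur's lemma forces $\delta|_{L_{\alpha_1+\alpha_2}}=0$. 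As a further upper bound, the $\mathfrak{sl}_2$-Lefschetz structure of Theorem \ref{thm-sl2-action} gives an injection
\[
\tau\wedge(-)\colon \mH^2(\Nt,\wedge^2 T\Nt)^{-4}\cong L_0^{\oplus 2}\hookrightarrow \mH^2(\Nt,\wedge^4 T\Nt)^{-6},
\]
so $\mH^2\supseteq L_0^{\oplus 2}\oplus L_{\alpha_1+\alpha_2}$ and $\mathrm{rank}(\delta)\le 1$.

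It now remains to show that $\delta$ is in fact zero, equivalently that $\mH^3(\Nt,\wedge^4 T\Nt)^{-6}$ has dimension $2$ rather than $1$. The plan is to compute this group directly via the relative Lie algebra cohomology approach (Theorem \ref{Bott_rel_lie} and Proposition \ref{rel_lie_BGG}) applied to the $\lb$-module
\[
V_4^{-6}=\frac{\g\otimes \wedge^3\n\oplus \lu\otimes\wedge^4\n}{\Delta(\lb\otimes \wedge^3\n)}
\]
supplied by Corollary \ref{cor-wedge-P-structure}. Concretely, I would enumerate the weight spaces $V_4^{-6}[w\cdot 0]$ for the two length-$3$ elements $s_1s_2s_1$ and $s_2s_1s_2$ of $W(B_2)$, produce explicit representatives modulo $\Delta(\lb\otimes\wedge^3\n)$, and then compute the position-$3$ cohomology of the BGG complex \eqref{eqn-BGG-B2} using the incoming maps $v_1,v_2$ and the outgoing maps $\pm f_1,\pm f_2$ as given in the excerpt. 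Confirming that this cohomology is two-dimensional combines with the bounds above to force $\delta=0$, yielding the decomposition of $\mH^2(\Nt,\wedge^4 T\Nt)^{-6}$ as stated.

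The main obstacle is the combinatorial bookkeeping required to identify the weight spaces of $V_4^{-6}$ and to recognize which weight vectors in the $44$-dimensional ambient space $\g\otimes\wedge^3\n\oplus \lu\otimes\wedge^4\n$ become equal modulo the $24$-dimensional relation subspace $\Delta(\lb\otimes\wedge^3\n)$. This requires a systematic manipulation of Chevalley brackets in $\mathfrak{so}_5$ in the spirit of the $\mathrm{Gr}(2,4)$ computation carried out in Appendix \ref{sub_grassmannian}; in particular, at each weight one must compare $\g\otimes\wedge^3\n$-representatives against the image of $\Delta$ applied to suitable elements of $\lb\otimes\wedge^3\n$ of the matching weight, producing enough relations to cut the $44$-dimensional space down to its correct $20$-dimensional quotient. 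Once the relevant weight spaces and the differentials of \eqref{eqn-BGG-B2} are recorded, verifying that the third cohomology has dimension two is a mechanical finite-dimensional linear algebra computation.
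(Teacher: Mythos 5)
Your proposal is correct and, in its essential computational content, coincides with the paper's proof. Both arguments reduce to the same explicit verification via the BGG complex for the $B$-module $\pr_*(\wedge^4 T\Nt)^{-6}\cong G\times_B\frac{\g\otimes \wedge^3\n\oplus \lu\otimes\wedge^4\n}{\Delta(\lb\otimes\wedge^3\n)}$: the only difference is organizational. The paper runs the BGG complex at all homological degrees — it checks that the (nonzero) leftmost term is one-dimensional and mapped injectively, and that the next differential vanishes modulo $\Delta(\lb\otimes\wedge^3\n)$, which directly yields $L_0$-multiplicities $0,3,2$ at $i=1,2,3$ — and then combines this with the long exact sequence. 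You instead compute only the degree-$3$ cohomology, feed $\mH^3\cong L_0^{\oplus 2}$ back into the exact sequence to force $\delta=0$, and thereby get $\mH^2$ for free; this saves the $d_1^*$-injectivity check. Your preliminary bounds (Schur's lemma killing $\delta|_{L_{\alpha_1+\alpha_2}}$; injectivity of $\tau\wedge$ from Theorem \ref{thm-sl2-action} giving $L_0^{\oplus 2}\hookrightarrow\mH^2$) are valid, and your dimension counts ($44$-dimensional ambient, $24$-dimensional relation space, $20$-dimensional quotient) are correct, but once $\mH^3$ is computed these bounds become redundant. The one point to keep in mind when executing the plan: computing position-$3$ cohomology requires the image of $d_2^*$, hence you must also enumerate the length-$2$ weight spaces of $V_4^{-6}$ and apply $v_1,v_2$ — your phrasing suggests you are aware of this, but it is the same weight-space bookkeeping that the paper's argument carries out, so no computational saving occurs there.
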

\begin{proof}
By the discussion above, we are reduced to computing the multiplicity of $L_0$ in the Hochschild cohomology group $\mH^i(\Nt,\wedge^2T\Nt)^{-6}$. We resort to the BGG complex \eqref{eqn-BGG-B2} together with the explicit equivariant structure \eqref{eqn-b2-B-mod-structure} of the bundle for this computation. In this situation, the BGG complex reduces to
\[
0 \lra 
\begin{array}{c}
\C e_4\otimes f_1\wedge f_2 \wedge f_3
\end{array}
\lra
\begin{array}{c}
\C e_1\otimes f_1\wedge f_2 \wedge f_4\\
\oplus\\
\C e_3\otimes f_2\wedge f_3 \wedge f_4\\
\oplus\\
\C e_2\otimes f_1\wedge f_2 \wedge f_3\\
\oplus\\
\C e_4\otimes f_1\wedge f_3 \wedge f_4
\end{array} 
\lra 
\begin{array}{c}
\C e_1\otimes f_1\wedge f_2 \wedge f_3 \wedge f_4\\
\oplus\\
\C e_2\otimes f_1\wedge f_2 \wedge f_3 \wedge f_4\\
\end{array}
\lra 0,
\]
where the left most nonzero term sits in homological degree one, and the differentials $d_1^*$, $d_2^*$ are given in \eqref{eqn-BGG-B2}. It is easily checked that $d_1^*$ is injective, and $d_2^*$ sends the middle terms to zero modulo $\Delta(\lb\otimes \wedge^3 \n)$. The details will be left to the reader.
\end{proof}


\addcontentsline{toc}{section}{References}

\bibliographystyle{alpha}
\bibliography{qy-bib}


%

\vspace{0.1in}

\noindent A.~L.: { \sl \small  \'{E}cole Polytechnique F\'{e}d\'{e}rale de Lausanne,  SB MATH TAN, MA C3 535 (B\^atiment MA), Station 8, CH-1015 Lausanne, Switzerland} \newline \noindent {\tt \small email: anna.lachowska@epfl.ch}

\vspace{0.1in}

\noindent Y.~Q.: { \sl \small Department of Mathematics, Yale University, New Haven, CT 06511, USA} \newline \noindent {\tt \small email: you.qi@yale.edu}

%

\end{document}